\documentclass[12pt]{amsart}
\usepackage{amsmath,amsthm,amssymb,amscd,amsfonts}
\usepackage{graphicx}
\usepackage{microtype}
\usepackage{tikz}
\usepackage{latexsym}
\ifdefined\MonochromeFigure
  \colorlet{contourRight}{black}
  \colorlet{contourTop}{black}
  \colorlet{contourLeft}{black}
  \colorlet{contourBottom}{black}
\else
  \colorlet{contourRight}{blue}
  \colorlet{contourTop}{red}
  \colorlet{contourLeft}{green!55!black}
  \colorlet{contourBottom}{orange!90!black}
\fi

\ifdefined\IJNTReviewCopy
  \usepackage{setspace}
\fi

\usepackage[colorlinks=true,linkcolor=blue,citecolor=blue,urlcolor=blue]{hyperref}

\numberwithin{equation}{section}

\newtheorem{thm}{Theorem}[section]

\newtheorem{lem}[thm]{Lemma}
\newtheorem{prop}[thm]{Proposition}
\theoremstyle{definition}

\theoremstyle{remark}
\newtheorem{rem}[thm]{Remark}

\newcommand{\Real}{\mathbb R}
\newcommand{\Complex}{\mathbb C}

\newcommand{\abs}[1]{\left\vert#1\right\vert}

\newcommand{\set}[1]{\left\{#1\right\}}
\newcommand{\intpminf}{\int_{-\infty}^{\infty}}
\newcommand{\intzi}{\int_{0}^{\infty}}
\newcommand{\sumonei}{\sum_{n=1}^\infty}
\newcommand{\cmpint}{\int_{\sigma-i\infty}^{\sigma+i\infty}}

\newcommand{\zetas}{\zeta(s)}
\newcommand{\Gammas}{\Gamma(s)}
\newcommand{\dds}{\frac{d}{ds}}
\newcommand{\half}{\frac{1}{2}}
\newcommand{\Res}{\operatorname*{Res}}
\newcommand{\ber}{\operatorname{ber}}
\newcommand{\bei}{\operatorname{bei}}

\hypersetup{pdftitle={The Mobius Laplace transform: explicit formulas, RH, and simple zeros},pdfauthor={Sergey Liflandsky},pdfsubject={Analytic number theory},pdfkeywords={Riemann zeta function, Mobius function, explicit formula, Riemann hypothesis, Bessel functions}}
\begin{document}

\righthyphenmin=8
\emergencystretch=2em

\title[The M\"obius Laplace transform and simple zeros]{Explicit formulas for the M\"obius Laplace transform and a criterion for the Riemann hypothesis and simple zeros}
\author[Sergey Liflandsky]{Sergey Liflandsky}
\thanks{Corresponding author: Sergey Liflandsky. Email: \href{mailto:sliflandsky@gmail.com}{sliflandsky@gmail.com}.}
\email{sliflandsky@gmail.com}
\subjclass[2020]{Primary 11M06; Secondary 11M26, 33C10}
\keywords{Riemann zeta function, M\"obius function, explicit formula, Riemann hypothesis, Bessel functions}

\begin{abstract}
The estimate $\Phi(e^{-x})=O(x^{-1/2})$ as $x\to0^{+}$ for the discrete Laplace transform of the M\"obius function implies both the Riemann hypothesis and simplicity of every nontrivial zero. We give a direct proof and an explicit formula for the transform. Collisions between Gamma poles and trivial zeta zeros produce double poles, logarithmic residues, and four entire functions. Their M\"obius-weighted Bessel representations combine into a single regularized modified-Bessel series with a positive integral representation and an explicit truncation bound. Under the Riemann hypothesis and simplicity, the endpoint estimate is equivalent to uniform boundedness of finite Fej\'er-weighted zero sums; absolute summability is a sufficient alternative hypothesis. We also obtain multiplicity bounds from logarithmic losses, necessary square-summability of the residues, and a quantitative contour remainder. The relation to earlier convolution and summation theorems is stated explicitly.

\end{abstract}

\maketitle
\ifdefined\IJNTReviewCopy\thispagestyle{headings}\fi

\section{The Mellin integral representation}\label{sec:mellin}

Let
\begin{equation}\label{eq:phidef}
\Phi(x)=\sum_{n=1}^{\infty}\mu(n)x^{n}\qquad(\abs{x}<1),
\end{equation}
where $\mu$ is the M\"obius function. We study the discrete Laplace transform $\Phi(e^{-x})$ for $x>0$. Its endpoint estimate $O(x^{-1/2})$ forces both the Riemann hypothesis and simple nontrivial zeros. The explicit formulas below identify the contribution of the trivial poles in terms of $\kappa,\beta,\lambda,\upsilon$, and then as one M\"obius-weighted Bessel series.

For $\operatorname{Re}s>1$, the Dirichlet series \cite{Davenport} is
\begin{equation}\label{eq:dirichlet}
\frac{1}{\zeta(s)}=\sum_{n=1}^{\infty}\frac{\mu(n)}{n^{s}}.
\end{equation}
Since
\begin{equation}\label{eq:gammaint}
\intzi e^{-nt}t^{s-1}\,dt=\frac{\Gamma(s)}{n^s}
\qquad(\operatorname{Re}s>0),
\end{equation}
and $\sum_n|\mu(n)|\Gamma(\sigma)n^{-\sigma}\leq\Gamma(\sigma)\zeta(\sigma)<\infty$ for $\sigma>1$, absolute convergence permits termwise integration \cite{Rudin}:
\begin{equation}\label{eq:mellinfwd}
\frac{\Gamma(s)}{\zeta(s)}
=\intzi\Big(\sumonei\mu(n)e^{-nt}\Big)t^{s-1}\,dt
=\intzi\Phi(e^{-t})t^{s-1}\,dt.
\end{equation}
Mellin inversion therefore gives
\begin{equation}\label{eq:mellininv}
\Phi(e^{-x})=\frac{1}{2\pi i}\cmpint
 \frac{\Gamma(s)}{\zeta(s)}x^{-s}\,ds
\qquad(\sigma>1,\ x>0).
\end{equation}
We verify the needed convergence before shifting this line.

For fixed $\sigma$, Stirling's formula \cite[Section~5.11]{DLMF} gives
\begin{equation}\label{eq:stirling}
\abs{\Gamma(\sigma+it)}\sim\sqrt{2\pi}\,
 e^{-\pi\abs{t}/2}\abs{t}^{\sigma-1/2}
\qquad(\abs t\to\infty).
\end{equation}
Euler's product and $|p^{\sigma+it}-1|\leq p^\sigma+1$ imply
\begin{equation}\label{eq:zetalow}
\abs{\zeta(\sigma+it)}\geq\prod_p\frac{p^\sigma}{p^\sigma+1}
=\frac{\zeta(2\sigma)}{\zeta(\sigma)}=: \delta(\sigma)>0
\qquad(\sigma>1).
\end{equation}
Consequently,
\begin{equation}\label{eq:absint}
\frac1{2\pi}\int_{-\infty}^{\infty}
 \abs{\frac{\Gamma(\sigma+it)}{\zeta(\sigma+it)}x^{-\sigma-it}}\,dt
\leq\frac{x^{-\sigma}}{2\pi\delta(\sigma)}
 \int_{-\infty}^{\infty}\abs{\Gamma(\sigma+it)}\,dt<\infty.
\end{equation}
Here $\Gamma$ is bounded on $|t|\leq1$; on the tails, \eqref{eq:stirling} yields
\begin{equation}\label{eq:gammaeval}
\int_{|t|\geq1}|\Gamma(\sigma+it)|\,dt
\leq\frac{2C(\sigma)\Gamma(\sigma+\tfrac12)}{(\pi/2)^{\sigma+1/2}}.
\end{equation}
The function $t\mapsto\Phi(e^{-t})$ and its derivative converge locally uniformly on $(0,\infty)$, by comparison with $\sum_n e^{-nt_0}$ and $\sum_n ne^{-nt_0}$ for $t\geq t_0>0$. Its weighted absolute integral is at most $\Gamma(\sigma)\zeta(\sigma)$. Thus Lemma~\ref{lem:mellininv} in Appendix~\ref{app:mellininv} applies and proves \eqref{eq:mellininv}. The elementary bound $|\Phi(e^{-x})|\leq(e^x-1)^{-1}\leq x^{-1}$ will also be used.

\subsection{Statement of the main results}\label{subsec:results}

Write $\rho=\beta_\rho+i\gamma$ for a nontrivial zero of $\zeta$, and let \textbf{(S)} denote the assertion that every such zero is simple. The computation of Platt and Trudgian \cite{PlattTrudgian} verifies RH and simplicity up to height $3\cdot10^{12}$; no finite verification is assumed in our proofs.

Unless absolute convergence is asserted, sums over zeros use selected symmetric cutoffs:
\begin{equation}\label{eq:symconv}
\sum_\rho a_\rho:=\lim_{\nu\to\infty}
 \sum_{\rho:\,|\gamma|<T_\nu}a_\rho.
\end{equation}
Here $T_\nu\in[\nu,\nu+1]$ are the heights of Lemma~\ref{lem:ordinates}, chosen so that the horizontal lines avoid all zeros quantitatively. This is the convention used in classical explicit formulas \cite[Theorem~14.27]{Titchmarsh}. Under Hypothesis (H) in Section~\ref{sec:zerosum}, the zero series is absolutely convergent.

We shall prove the following results. Define the entire functions
\begin{align}
\kappa(x)&=\sum_{m=0}^{\infty}\frac{(-1)^{m}(2\pi x)^{2m+1}}{\zeta(2m+2)\,[(2m+1)!]^{2}},\notag\\
\beta(x)&=\sum_{m=1}^{\infty}\frac{(-1)^{m}(2\pi x)^{2m}}{[(2m)!]^{2}\,\zeta(2m+1)},\label{eq:defkappabeta}\\
\lambda(x)&=\sum_{m=1}^{\infty}\frac{(-1)^{m}\,\zeta'(2m+1)\,(2\pi x)^{2m}}{[(2m)!]^{2}\,\zeta(2m+1)^{2}},\notag\\
\upsilon(x)&=\sum_{m=1}^{\infty}\frac{(-1)^{m}\,\Gamma'(2m+1)\,(2\pi x)^{2m}}{[(2m)!]^{3}\,\zeta(2m+1)}.\label{eq:deflamups}
\end{align}

\begin{thm}[Explicit formula]\label{thm:main}
Assume Hypothesis (S). Then for every $x>0$, with the convention \eqref{eq:symconv},
\begin{multline}\label{eq:mainformula}
\sum_{n=1}^{\infty}\mu(n)e^{-nx}
=\sum_{\rho}\frac{\Gamma(\rho)\,x^{-\rho}}{\zeta'(\rho)}\\
+\pi\kappa(x)+2\lambda(x)+4\upsilon(x)-2\beta(x)\log(2\pi x)-2 .
\end{multline}
Equivalently, the complete negative-integer contribution is a single absolutely convergent series:
\begin{multline}\label{eq:explicit-unified}
\Phi(e^{-x})=\sum_\rho\frac{\Gamma(\rho)x^{-\rho}}{\zeta'(\rho)}-2\\
+\sum_{k=1}^{\infty}\frac{\mu(k)}k
\left[4\operatorname{Re}K_0\left(2e^{i\pi/4}\sqrt{\frac{2\pi x}{k}}\right)
+2\log\left(\frac{2\pi x}{k}\right)+4\gamma_e\right].
\end{multline}
Here $K_0$ is the principal branch of the modified Bessel function of order zero and $\gamma_e$ is Euler's constant. The square brackets are summed as a whole. Without (S), replace each simple-zero term by the residue at that distinct zero, as in Remark~\ref{rem:mult}.
\end{thm}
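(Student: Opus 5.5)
Starting from the Mellin inversion formula \eqref{eq:mellininv}, we shift the line of integration to the left across the rectangle with vertices $\sigma\pm iT_\nu$ and $-N-\tfrac12\pm iT_\nu$, where $T_\nu$ are the heights of Lemma~\ref{lem:ordinates} and $N$ is a large integer. The residue theorem then expresses $\Phi(e^{-x})$ as a sum of residues of $F(s)=\Gamma(s)\zeta(s)^{-1}x^{-s}$ inside the rectangle, plus three edge integrals. The poles are of three types. \emph{(i)} The nontrivial zeros $\rho$ with $|\gamma|<T_\nu$ give, under (S), the residues $\Gamma(\rho)x^{-\rho}/\zeta'(\rho)$. \emph{(ii)} At $s=0$, $\Gamma$ has a simple pole and $\zeta(0)=-\tfrac12$, so the residue is $1/\zeta(0)=-2$. \emph{(iii)} At a negative odd integer $s=-(2m+1)$, $\Gamma$ has a simple pole and $\zeta$ is nonzero. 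At a negative even integer $s=-2m$, the simple pole of $\Gamma$ collides with the trivial zero of $\zeta$, producing a double pole.

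For the odd points, we compute the residue $\frac{(-1)^{2m+1}x^{2m+1}}{(2m+1)!\,\zeta(-2m-1)}$. We rewrite it via the functional equation $\zeta(-2m-1)=(-1)^{m+1}2(2m+1)!\,\zeta(2m+2)/(2\pi)^{2m+2}$, and summing over $m$ gives exactly $\pi\kappa(x)$. For the even points, we expand both factors to second order. Near $s=-2m$ we have $\Gamma(s)=\frac{1}{(2m)!(s+2m)}\bigl(1+\psi(2m+1)(s+2m)+\dots\bigr)$ and $\zeta(s)=\zeta'(-2m)(s+2m)+\tfrac12\zeta''(-2m)(s+2m)^2+\dots$, while $x^{-s}=x^{2m}(1-(s+2m)\log x+\dots)$. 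Reading off the coefficient of $(s+2m)^{-1}$ gives the residue $\frac{x^{2m}}{(2m)!\zeta'(-2m)}\bigl[\psi(2m+1)-\log x-\tfrac{\zeta''(-2m)}{2\zeta'(-2m)}\bigr]$. We then differentiate the functional equation $\zeta(s)=2^s\pi^{s-1}\sin(\pi s/2)\Gamma(1-s)\zeta(1-s)$ twice at $s=-2m$. This expresses $\zeta'(-2m)$ through $\zeta(2m+1)$, and $\zeta''(-2m)/\zeta'(-2m)$ through $\log 2\pi$, $\psi(2m+1)$ and $\zeta'(2m+1)/\zeta(2m+1)$; the cosine term contributes nothing at second order. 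After substitution, the even-point residues split into the four series $\beta,\lambda,\upsilon$ and $\beta\log(2\pi x)$, with the coefficients $2,4,-2$. Here $\Gamma'(2m+1)/(2m)!=\psi(2m+1)$ accounts for the cube in $\upsilon$. Tracking these signs and constants is routine but delicate.

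The main analytic obstacle is showing that the edge integrals vanish in the limit. On the left vertical line $\operatorname{Re}s=-N-\tfrac12$, we use the functional equation to write $\Gamma(s)/\zeta(s)$ as $\Gamma(s)/\bigl(\chi(s)\zeta(1-s)\bigr)$, with $\zeta(1-s)$ bounded below as in \eqref{eq:zetalow}. By Stirling's formula, $|\Gamma(s)/\chi(s)|$ behaves like $|\Gamma(s)\Gamma(1-s)|^{-1}$ times powers of $2\pi$, which is roughly $(2\pi)^{N}/\Gamma(N+\tfrac32)^2$ times a factor of polynomial growth in $|t|$ that is offset by the decay of the sine. Hence the left integral is $O_x\bigl((2\pi x)^{N}/(N!)^2\bigr)$, which tends to $0$. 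This same factorial decay explains why the trivial-pole series converge absolutely and define entire functions. On the horizontal segments at height $T_\nu$, Lemma~\ref{lem:ordinates} gives $|1/\zeta(s)|\ll e^{C\log^2 T_\nu}$ (or $T_\nu^{\varepsilon}$) for $-1\le\sigma\le 2$, and the functional equation covers $\sigma<-1$. The factor $e^{-\pi T_\nu/2}$ from Stirling's formula therefore kills these integrals as $\nu\to\infty$. We first let $N\to\infty$ at fixed $\nu$, then let $\nu\to\infty$, which yields \eqref{eq:mainformula} with the convention \eqref{eq:symconv}. Without (S), residues at multiple zeros simply replace the simple-zero terms.

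For \eqref{eq:explicit-unified}, we insert $1/\zeta(j)=\sum_k\mu(k)k^{-j}$ and $\zeta'(j)/\zeta(j)^2=-\frac{d}{dj}\bigl(1/\zeta(j)\bigr)=\sum_k\mu(k)\log k\,k^{-j}$ for $j\ge2$. Interchanging the sums turns the trivial-pole part into $\sum_k\frac{\mu(k)}{k}G(2\pi x/k)$. Here $G(y)$ is $\pi\times$ the ber/bei-type series plus the logarithmic series, which we identify with $4\operatorname{Re}K_0(2e^{i\pi/4}\sqrt y)+2\log y+4\gamma_e$ using the power series of $K_0$, namely $K_0(z)=-(\log(z/2)+\gamma_e)I_0(z)+\sum\frac{(z/2)^{2m}}{(m!)^2}H_m$. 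The constant and $\log$ terms are subtracted so that $G(y)=O(y)$ as $y\to0$, and $\sum_k\frac{|\mu(k)|}{k}\cdot\frac{x}{k}$ converges. Consequently the bracketed series converges absolutely and the interchange is justified. The terms with $m=0$ in $\kappa$, together with the constant $-2$, must be checked to match.
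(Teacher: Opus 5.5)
Your proposal is correct and is essentially the paper's argument: it uses the same shift of \eqref{eq:mellininv} along the heights $T_\nu$, the same residues at $\rho$, $0$, $-(2m+1)$ and $-2m$, and the same coefficient matching against the power series of $K_0$; your Laurent computation of the double-pole residue is exactly the paper's cross-check in Appendices~\ref{app:gammalaurent}--\ref{app:sanity}, and letting $N\to\infty$ before $\nu\to\infty$ is a harmless variant of the paper's single rectangle with left edge $\tfrac12-2N$ and height $T_N$. Three small repairs are needed: on $\operatorname{Re}s=-N-\tfrac12$ one has $\Gamma(s)/\chi(s)=\pi^{2}(2\pi)^{-s}/\bigl(\sin(\pi s)\sin(\tfrac{\pi s}{2})\Gamma(1-s)^{2}\bigr)$, so the decaying factor is $|\Gamma(1-s)|^{-2}$, not $|\Gamma(s)\Gamma(1-s)|^{-1}=|\sin(\pi s)|/\pi$; the Gamma lower bounds there and on the far horizontal rays must be uniform in $\sigma$, which fixed-$\sigma$ Stirling does not supply but Lemma~\ref{lem:gammalower} (or Stirling uniform in a sector) does; and your closing \emph{check} is automatic, because the $m=0$ term of $\pi\kappa$ is just the $k$-sum of the linear term $\pi y$ of the bracket, while $-2$ is the separate residue at $s=0$.
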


\begin{thm}[M\"obius--Bessel expansions]\label{thm:bessel}
For every $x>0$ the following series converge absolutely:
\begin{equation}\label{eq:besselcombined}
\sum_{k=1}^{\infty}\frac{\mu(k)}{k}\left(J_{0}\Big(2e^{-i\pi/4}\sqrt{\tfrac{2\pi x}{k}}\Big)-1\right)=\beta(x)+i\,\kappa(x),
\end{equation}
\begin{equation}\label{eq:bessellambda}
\sum_{k=1}^{\infty}\frac{\mu(k)\log k}{k}\,\operatorname{Re}\!\left(J_{0}\Big(2e^{-i\pi/4}\sqrt{\tfrac{2\pi x}{k}}\Big)-1\right)=\lambda(x),
\end{equation}
where $J_{0}$ is the Bessel function of the first kind of order zero. Moreover $\kappa,\beta,\lambda,\upsilon$ are entire functions. The representation of the remaining function $\upsilon$ and the single combined formula for all four contributions are given in Proposition~\ref{prop:upsilon} and Theorem~\ref{thm:unified}.
\end{thm}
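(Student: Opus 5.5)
The plan is to expand $J_0$ in its power series, interchange the sum over $k$ with the sum over the Bessel index, and evaluate each inner sum over $k$ with the Dirichlet series \eqref{eq:dirichlet} and its derivative. From $J_0(z)=\sum_{j\ge0}(-1)^j(z/2)^{2j}/(j!)^2$ and $z=2e^{-i\pi/4}\sqrt{2\pi x/k}$ we get $(z/2)^2=-i\,(2\pi x/k)$. Hence
\begin{equation*}
J_0\Big(2e^{-i\pi/4}\sqrt{\tfrac{2\pi x}{k}}\Big)-1=\sum_{j=1}^{\infty}\frac{i^{j}(2\pi x)^{j}}{(j!)^2\,k^{j}} .
\end{equation*}
Multiplying by $\mu(k)/k$ produces the terms $\mu(k)\,i^j(2\pi x)^j/\big((j!)^2k^{j+1}\big)$, with $j\ge1$ so that $j+1\ge2$.

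\textbf{Justifying the interchange.} I will bound the double series absolutely:
\begin{equation*}
\sum_{k\ge1}\sum_{j\ge1}\frac{(2\pi x)^j}{(j!)^2k^{j+1}}=\sum_{j\ge1}\frac{(2\pi x)^j\zeta(j+1)}{(j!)^2}\le\zeta(2)\sum_{j\ge1}\frac{(2\pi x)^j}{(j!)^2}<\infty .
\end{equation*}
This bound does two things. It gives absolute convergence of the $k$-series in \eqref{eq:besselcombined}, because the $k$-th term is bounded by the inner $j$-sum. It also lets Fubini reorder the double sum.

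\textbf{Evaluating \eqref{eq:besselcombined}.} After reordering, the coefficient of $(2\pi x)^j$ is $i^j/\big((j!)^2\zeta(j+1)\big)$, by \eqref{eq:dirichlet} at $s=j+1\ge2$. Now split by parity of $j$.
\begin{itemize}
\item For $j=2m$ we have $i^{2m}=(-1)^m$, and these terms reproduce $\beta(x)$.
\item For $j=2m+1$ we have $i^{2m+1}=i(-1)^m$, and these terms reproduce $i\,\kappa(x)$, with $\zeta(2m+2)$ in the denominator as in the definition of $\kappa$.
\end{itemize}

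\textbf{Evaluating \eqref{eq:bessellambda}.} Differentiating \eqref{eq:dirichlet} termwise for $\operatorname{Re}s>1$ (legitimate by locally uniform absolute convergence) gives
\begin{equation*}
\sum_k\mu(k)(\log k)\,k^{-s}=\frac{\zeta'(s)}{\zeta(s)^2}.
\end{equation*}
The same Fubini argument applies, now with majorant $\sum_j(2\pi x)^j|\zeta'(j+1)|/(j!)^2\le|\zeta'(2)|\sum_j(2\pi x)^j/(j!)^2$. Taking real parts keeps only the even indices $j=2m$, with sign $(-1)^m$ and $s=2m+1$. The result is exactly $\lambda(x)$.

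\textbf{Entireness.} It suffices to show that each coefficient grows at most polynomially in $m$ and is divided by $[(2m)!]^2$ or $[(2m+1)!]^2$. Then the root test gives infinite radius of convergence.
\begin{itemize}
\item For $\kappa$ and $\beta$: $1/\zeta(\sigma)\le1$ for $\sigma\ge2$.
\item For $\lambda$: $|\zeta'(\sigma)|\le|\zeta'(2)|$ and $\zeta(\sigma)\ge1$ for $\sigma\ge2$.
\item For $\upsilon$: $\Gamma'(2m+1)/(2m)!=\psi(2m+1)=O(\log m)$, so its coefficient is $O(\log m)/[(2m)!]^2$.
\end{itemize}

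I do not expect a real obstacle here. The only point needing care is the interchange of summations, which the explicit majorants above handle. One must also keep track of the branch and phase: $(z/2)^2=-i(2\pi x/k)$ holds regardless of which square root is taken, since only even powers of $z$ appear.
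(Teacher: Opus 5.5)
Your proposal is correct and follows the paper's route. Both expand the reciprocal zeta factors, and $\zeta'/\zeta^2$, as M\"obius Dirichlet series, interchange by Tonelli, separate even and odd powers through the rotated $J_0$ series, and prove entireness by a coefficient bound and the root test. The only difference is your majorant: summing over $k$ first to get $\sum_j (2\pi x)^j\zeta(j+1)/(j!)^2$ is slightly cleaner than the paper's bound $g(y)\le y e^{y}$ with a split at $k=2\pi x$.
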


\begin{thm}[A criterion for RH and simplicity]\label{thm:criterion}
\leavevmode\par
\begin{enumerate}
\item[(a)] The estimate
\begin{equation}\label{eq:rhbound}
\Phi(e^{-x})=\sum_{n=1}^{\infty}\mu(n)e^{-nx}
 =O(x^{-1/2})\qquad(x\to0^{+})
\end{equation}
implies the Riemann hypothesis and simplicity of every nontrivial zero. No assumption on the zeros is required.
\item[(b)] Under RH and (S), \eqref{eq:rhbound} holds if and only if the finite sums
\[
\sum_{|\gamma|<T}\left(1-\frac{|\gamma|}{T}\right)
 \frac{\Gamma(\rho)}{\zeta'(\rho)}x^{-i\gamma}
\]
are uniformly bounded for $T>0$ and $x>0$. This is Hypothesis $(H_{\mathrm F})$ of Section~\ref{sec:zerosum}; the absolute summability condition (H) is sufficient.
\end{enumerate}
\end{thm}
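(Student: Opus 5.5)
For part (a) I will use the Mellin transform \eqref{eq:mellinfwd}, $\Gamma(s)/\zeta(s)=\intzi\Phi(e^{-t})t^{s-1}\,dt$, and continue it analytically to the left. I split the integral at $t=1$. The piece $\int_1^\infty$ is entire in $s$, since $|\Phi(e^{-t})|\le (e^t-1)^{-1}$ decays exponentially. On $(0,1]$ the assumed bound $|\Phi(e^{-t})|\le Ct^{-1/2}$ makes $\int_0^1\Phi(e^{-t})t^{s-1}\,dt$ absolutely convergent and holomorphic for $\operatorname{Re}s>1/2$. Hence $\Gamma(s)/\zeta(s)$ continues holomorphically to $\operatorname{Re}s>1/2$. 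Because $\Gamma$ has no zeros, $\zeta$ has no zeros there, and the functional equation then gives RH.

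For simplicity I need a growth bound near the critical line. For $\sigma=1/2+\varepsilon$,
\[
\Big|\frac{\Gamma(s)}{\zeta(s)}\Big|\le C\int_0^1t^{\varepsilon-1}\,dt+O(1)=\frac{C}{\varepsilon}+O(1),
\]
uniformly in $t$. Let $\rho=1/2+i\gamma$ be a zero of multiplicity $m$. Then $|\zeta(\rho+\varepsilon)|\asymp\varepsilon^{m}$ as $\varepsilon\to0^+$, and $\Gamma(\rho+\varepsilon)\to\Gamma(\rho)\ne0$. So $|\Gamma/\zeta|\gg\varepsilon^{-m}$, which is compatible with $O(\varepsilon^{-1})$ only if $m=1$. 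This step is straightforward. The one thing to check is that the constant in the $O(x^{-1/2})$ hypothesis covers all of $(0,1]$; I can arrange this by enlarging $C$, using the elementary bound $|\Phi(e^{-x})|\le x^{-1}$ on any compact subinterval.

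For part (b), assume RH and (S). I would pass from the explicit formula of Theorem~\ref{thm:main} to a smoothed (Fejér) version. The trivial-pole contributions $\pi\kappa+2\lambda+4\upsilon-2\beta\log(2\pi x)-2$ are $O(1+|\log x|)$ near $0$, and in fact bounded because each of $\kappa,\beta,\lambda,\upsilon$ is entire and vanishes to order at least $2$ at $0$. So the estimate \eqref{eq:rhbound} is equivalent to boundedness of $x^{1/2}\sum_\rho\Gamma(\rho)x^{-\rho}/\zeta'(\rho)=\sum_\rho\Gamma(\rho)x^{-i\gamma}/\zeta'(\rho)$ as $x\to0^+$. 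For large $x$ this quantity is trivially controlled via $|\Phi|\le x^{-1}$.

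To obtain the Fejér sums, I would apply the Mellin inversion to $\Phi$ against a Fejér kernel in $\log x$. Concretely, I average $x^{1/2}\Phi(e^{-x})$ over $x e^{u}$ with weight $T\,\mathrm{sinc}^2(Tu/2)/(2\pi)$. Shifting the contour to $\operatorname{Re}s=1/2$ picks up the residues with weights $(1-|\gamma|/T)_+$, plus the trivial-pole part.

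\emph{Forward direction.} Boundedness of $x^{1/2}\Phi$, together with positivity and unit mass of the Fejér kernel, gives uniform boundedness of the Fejér sums.

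\emph{Converse.} Uniformly bounded Fejér means, taking $T\to\infty$ along the symmetric cutoffs of \eqref{eq:symconv}, give $\sum_\rho\Gamma(\rho)x^{-i\gamma}/\zeta'(\rho)$ as a limit that is bounded whenever the series converges. The series does converge under (S) and RH, by Theorem~\ref{thm:main}, since Fejér limits agree with convergent limits. Finally, (H) gives absolute convergence, and the Fejér sums are then bounded by $\sum|\Gamma(\rho)/\zeta'(\rho)|$.

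The main obstacle is the contour shift in (b). I must control $\Gamma(s)/\zeta(s)$ on horizontal segments at the heights $T_\nu$ of Lemma~\ref{lem:ordinates}, and show that the smoothed identity holds exactly rather than only in the limit. My first attempt would be to use the exponential decay of $\Gamma$ against the polynomial lower bounds for $|\zeta|$ on those lines, as in the quantitative contour remainder mentioned in the abstract.
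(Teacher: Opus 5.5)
Your part (a) follows the paper's argument and is correct: holomorphy of $G(s)=\int_0^\infty\Phi(e^{-x})x^{s-1}\,dx$ on $\operatorname{Re}s>\tfrac12$, then the identity theorem, then the radial comparison of $C/\eta$ with $\eta^{-m_\rho}$. Your remark that (H) suffices is also correct. (Minor slip: $\kappa$ vanishes only to first order at $0$, though the trivial-pole part is still $-2+O(x)$ there.) The gaps are in the equivalence in part (b).

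\emph{Endpoint bound $\Rightarrow(H_{\mathrm F})$.} Write $a_\rho=\Gamma(\rho)/\zeta'(\rho)$, $h(u)=\sum_\rho a_\rho e^{i\gamma u}$ with $x=e^{-u}$, and $E(x)=\pi\kappa(x)+2\lambda(x)+4\upsilon(x)-2\beta(x)\log(2\pi x)-2$. Formally $P_T=h*K_T$, and the Fej\'er kernel has full support. So even for large $u$ the Fej\'er sum samples $h$ at $u-v\to-\infty$, i.e.\ at $x\to\infty$, and you need $h$ bounded on all of $\Real$.

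Your claim that for large $x$ this is ``trivially controlled via $\abs{\Phi}\le x^{-1}$'' is false. There $h(u)=x^{1/2}\bigl(\Phi(e^{-x})-E(x)\bigr)$, and the only elementary bound, $\abs{E(x)+2}\le 30x$, gives $O(x^{3/2})$. Showing $E(x)=O(x^{-1/2})$ as $x\to\infty$ is a square-root bound for a smoothed M\"obius sum, of Mertens type. For the same reason, positivity and unit mass applied to $g(u)=x^{1/2}\Phi(e^{-x})$ bound only $g*K_T$. The difference $g*K_T-P_T$ is the Fej\'er average of $x^{1/2}E(x)$, which is exactly the uncontrolled piece.

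The missing idea is the paper's transfer by almost periodicity:
\begin{enumerate}
\item[(i)] obtain $\abs{a_\rho}\le L$ from $a_\rho=\lim_{\eta\to0^+}\eta G(\rho+\eta)$;
\item[(ii)] prove the test-function identity $\int h\varphi=\sum_\rho a_\rho\widehat\varphi(-\gamma)$;
\item[(iii)] use return times $\tau_j\to\infty$ with $e^{i\gamma\tau_j}\to1$ for every ordinate to carry $\limsup_{u\to+\infty}\abs{h(u)}\le L$ to $\sup_{\Real}\abs{h}=L$.
\end{enumerate}
Only then is $K_T\ge0$ used, and it is applied to $h$.

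Your contour-shift mechanism also fails before the horizontal segments come into play. $K_T$ decays only quadratically, so $\int K_T(v)e^{-wv}\,dv$ diverges for $\operatorname{Re}w\neq0$. Hence the Fej\'er-smoothed integrand cannot be placed on $\operatorname{Re}s=\tfrac32$ and shifted. The paper instead works with smooth compactly supported $\varphi$ and a Gaussian-damped $K_T$ acting on the already bounded $h$.

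\emph{$(H_{\mathrm F})\Rightarrow$ endpoint bound.} ``Fej\'er limits agree with convergent limits'' requires $S(t)=\sum_{\abs{\gamma}<t}a_\rho e^{i\gamma u}$ to converge as $t\to\infty$ through all reals. Theorem~\ref{thm:main} gives convergence only along the heights $T_\nu$, with $O(\log\nu)$ zeros between consecutive heights. Since $P_T=T^{-1}\int_0^T S(t)\,dt$, you still must show that the Ces\`aro average of $S(t)-S(T_{\nu(t)})$ tends to $0$, where $T_{\nu(t)}\le t<T_{\nu(t)+1}$.

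This is repairable. The Bohr mean square of the bounded trigonometric polynomial $P_T$ gives $\sum_{\abs{\gamma}<T}(1-\abs{\gamma}/T)^2\abs{a_\rho}^2\le C^2$, hence $\sum_\rho\abs{a_\rho}^2\le C^2$. Cauchy--Schwarz with $N(T)=O(T\log T)$ then gives $T^{-1}\sum_{\abs{\gamma}<T+2}\abs{a_\rho}\to0$, which is what you need. Alternatively, follow the paper: extract $\abs{a_\rho}\le C$ from Bohr coefficients, deduce $\int P_T\varphi\to\int h\varphi$ for smooth compactly supported $\varphi$, and conclude $\abs{h}\le C$ by continuity of $h$.
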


Thus \eqref{eq:rhbound} is equivalent to RH, (S), and $(H_{\mathrm F})$ together. The additional uniform bound is not derived from RH and simplicity alone. Theorem~\ref{thm:fejer} identifies its best constant, and Proposition~\ref{prop:multlog} shows that even $\Phi(e^{-x})=o(x^{-1/2}\log(1/x))$ implies RH and simplicity.

Mellin inversion and contour shifting belong to the classical approach of Hardy and Littlewood \cite{HL1916}, whose Gaussian-kernel series also appears in Ramanujan's notebooks \cite[pp.~468--469]{BerndtIV}. Related Riesz-type criteria are studied in \cite{Riesz1916,AGM2022,GM2023}. B\'aez-Duarte's convolution theorem \cite{BaezDuarte} contains the endpoint implication to RH and simplicity; its exact specialization is given in Section~\ref{sec:criterion}. General reciprocal-function identities were developed by K\"uhn, Robles and Roy \cite{KRR2015}. The real simple-zero identity here is also a specialization of the M\"obius summation theorem of Chorge and Dixit \cite{ChorgeDixit}; see Remark~\ref{rem:chorge-dixit}.

Our focus is the explicit evaluation and quantitative analysis of this transform. Sections~\ref{sec:critstrip}--\ref{sec:even} compute every residue, including the logarithmic terms caused by double poles; Appendices~\ref{app:gammalaurent}--\ref{app:sanity} check the even-pole calculation independently. Section~\ref{sec:integral} supplies the contour estimates and an explicit remainder. Section~\ref{sec:special} completes the representation of $\upsilon$, derives the unified formula coefficient by coefficient, and bounds its arithmetic tail. Sections~\ref{sec:zerosum}--\ref{sec:criterion} establish the Fej\'er characterization, multiplicity bounds, and necessary residue estimates. These refinements are distinguished from the general summation and convolution principles cited above.

For the Mertens function, see Titchmarsh \cite[Theorem~14.27]{Titchmarsh}, Bartz \cite{Bartz1991}, and Inoue \cite{Inoue2018}.

For comparison we record the Mertens-function formula: for $x>1$, under RH and Hypothesis (S),
\begin{equation}\label{eq:titchmarsh}
\sideset{}{'}\sum_{n\le x}\mu(n)=\lim_{\nu\to\infty}\sum_{\abs{\gamma}<T_{\nu}}\frac{x^{\rho}}{\rho\,\zeta'(\rho)}
-2+\sum_{n=1}^{\infty}\frac{(-1)^{n-1}(2\pi/x)^{2n}}{(2n)!\,n\,\zeta(2n+1)}\, .
\end{equation}
The heights are selected as in the cited theorem, and the prime assigns weight $1/2$ to $n=x$ at integer $x$. Unlike \eqref{eq:mainformula}, its trivial-zero terms come from simple poles. The residue at $s=0$ contributes $-2$ in both formulas.

\section{Calculation of the residues in the critical strip}\label{sec:critstrip}

At a simple nontrivial zero $\rho$, the local Taylor expansion of $\zeta$ gives
\begin{equation}\label{eq:resrho}
\Res_{s=\rho}\ \frac{\Gamma(s)x^{-s}}{\zetas}=\frac{\Gamma(\rho)\,x^{-\rho}}{\zeta'(\rho)} .
\end{equation}

Next, $\Gammas$ has a simple pole at $s=0$, and $\zeta(0)=-\frac{1}{2}$. Let us first calculate the residues of $\Gammas$ at the nonpositive integers. We use the reflection identity (\cite[Section 5.5]{DLMF})
\begin{equation}\label{eq:reflection}
\Gammas\,\Gamma(1-s)=\frac{\pi}{\sin(\pi s)} .
\end{equation}
Since $\Gammas$ has no zeros, and since $\sin(\pi s)=(-1)^{n}\sin\big(\pi(s+n)\big)$ near $s=-n$,
\begin{equation}\label{eq:sinlimit}
\lim_{s\to-n}\frac{(s+n)\,\pi}{\sin(\pi s)}=\frac{1}{\cos(\pi n)}=(-1)^{n} ,
\end{equation}
and we obtain that
\begin{equation}\label{eq:resgamma}
\Res_{s=-n}\ \Gammas=\lim_{s\to-n}\frac{(s+n)\,\pi}{\sin(\pi s)\,\Gamma(1-s)}=\frac{(-1)^{n}}{\Gamma(1+n)}=\frac{(-1)^{n}}{n!} .
\end{equation}
More precisely, we shall use the Laurent expansion at $s=-n$: 
\begin{equation}\label{eq:gammalaurent}
\Gamma(s)=\frac{(-1)^{n}}{n!}\left(\frac{1}{s+n}+\psi(n+1)+O(s+n)\right)\qquad(s\to-n),
\end{equation}
where $\psi=\Gamma'/\Gamma$ is the digamma function; the constant term follows from \eqref{eq:reflection} and the expansion of $\pi/\sin(\pi s)$ or, alternatively, from $\Gamma(s)=\Gamma(s+n+1)/\big(s(s+1)\cdots(s+n)\big)$; see \cite[Section 1.1]{Lebedev}.

By \eqref{eq:resgamma}, $\Res_{s=0}\Gammas=1$, and therefore
\begin{equation}\label{eq:reszero}
\Res_{s=0}\ \frac{\Gamma(s)x^{-s}}{\zetas}=\lim_{s\to0}\frac{s\,\Gamma(s)\,x^{-s}}{\zetas}=\frac{1}{\zeta(0)}=-2 .
\end{equation}
Under (S), the residues at $0$ and in the critical strip therefore sum to
\begin{equation}\label{eq:critsum}
-2+\sum_{\rho}\frac{\Gamma(\rho)\,x^{-\rho}}{\zeta'(\rho)} ,
\end{equation}
with the summation convention \eqref{eq:symconv}.

\section{Calculation of the residues at the odd negative integers}\label{sec:odd}

First we recall the Bernoulli numbers, which are determined by the power series expansion
\begin{equation}\label{eq:berndef}
\frac{t}{e^{t}-1}=\sum_{n=0}^{\infty}\frac{B_{n}t^{n}}{n!}\qquad(\abs{t}<2\pi),
\end{equation}
so that $B_{2n+1}=0$ for $n\geq1$. Using the classical contour integral representation of $\zetas$ and the residue theorem (see \cite[Chapter II]{Titchmarsh} or \cite{Ayoub}) one obtains Euler's evaluation
\begin{equation}\label{eq:euler2m}
\zeta(2m)=(-1)^{m+1}\frac{(2\pi)^{2m}B_{2m}}{2\,(2m)!}\qquad(m\geq1),
\end{equation}
or equivalently
\begin{equation}\label{eq:berninv}
B_{2m+2}=(-1)^{m}\frac{2\,(2m+2)!\,\zeta(2m+2)}{(2\pi)^{2m+2}} .
\end{equation}
The analytic continuation of $\zetas$ to $\Complex\setminus\{1\}$ satisfies the functional equation (see \cite[Chapter II]{Titchmarsh})
\begin{equation}\label{eq:funceq}
\zetas=2^{s}\pi^{s-1}\sin\big(\tfrac{1}{2}\pi s\big)\,\Gamma(1-s)\,\zeta(1-s) .
\end{equation}
Equation \eqref{eq:funceq} implies that $\zetas$ has simple zeros at the points $s=-2,-4,-6,\dots$, the trivial zeros, and it also allows us to calculate the values of $\zetas$ at the odd negative integers. Namely,
\begin{equation}\label{eq:zetaoddval0}
\zeta\big(-(2m+1)\big)=2^{-(2m+1)}\pi^{-2m-2}\sin\big(-\tfrac{1}{2}\pi+\pi m\big)\,\Gamma(2m+2)\,\zeta(2m+2)\, .
\end{equation}
Here $\sin\big(\pi m-\tfrac{\pi}{2}\big)=-\cos(\pi m)=(-1)^{m+1}$ and $\Gamma(2m+2)=(2m+1)!$, so that
\begin{equation}\label{eq:zetaoddval1}
\zeta\big(-(2m+1)\big)=(-1)^{m+1}\,\frac{(2m+1)!\,\zeta(2m+2)}{2^{2m+1}\,\pi^{2m+2}} ,
\end{equation}
and substituting \eqref{eq:berninv} gives
\begin{equation}\label{eq:zetaoddval}
\zeta\big(-(2m+1)\big)=-\frac{B_{2m+2}}{2m+2}\qquad(m\geq0) .
\end{equation}
Now, since the pole of $\Gammas$ at $-(2m+1)$ is simple with residue $-1/(2m+1)!$ by \eqref{eq:resgamma}, while $\zetas$ is regular and nonzero there by \eqref{eq:zetaoddval}, we calculate
\begin{align}
&\Res_{s=-(2m+1)}\ \frac{\Gamma(s)x^{-s}}{\zetas}
=\lim_{s\to-(2m+1)}\big(s+2m+1\big)\,\frac{\Gamma(s)\,x^{-s}}{\zetas}\notag\\
&\qquad=\frac{(-1)^{2m+1}}{(2m+1)!}\cdot\frac{x^{2m+1}}{\zeta\big(-(2m+1)\big)}
=-\,\frac{x^{2m+1}}{(2m+1)!}\cdot\frac{2m+2}{-B_{2m+2}}\notag\\
&\qquad=\frac{(2m+2)\,x^{2m+1}}{(2m+1)!\,B_{2m+2}} .\label{eq:oddres}
\end{align}
Thus we have
\begin{equation}\label{eq:oddsum1}
\sum_{m=0}^{\infty}\Res_{s=-(2m+1)}\frac{\Gamma(s)x^{-s}}{\zetas}=\sum_{m=0}^{\infty}\frac{(2m+2)\,x^{2m+1}}{(2m+1)!\,B_{2m+2}} ,
\end{equation}
and using \eqref{eq:berninv} we obtain
\begin{equation}\label{eq:oddsum2}
\sum_{m=0}^{\infty}\Res_{s=-(2m+1)}\frac{\Gamma(s)x^{-s}}{\zetas}
=\pi\sum_{m=0}^{\infty}\frac{(-1)^{m}(2\pi x)^{2m+1}}{\zeta(2m+2)\,[(2m+1)!]^{2}} .
\end{equation}
Here we recognize the function $\kappa(x)$ of \eqref{eq:defkappabeta}, which admits the two equivalent representations
\begin{equation}\label{eq:kappatwo}
\kappa(x)=\sum_{m=0}^{\infty}\frac{(-1)^{m}(2\pi x)^{2m+1}}{\zeta(2m+2)\,[(2m+1)!]^{2}}
=\sum_{m=0}^{\infty}\frac{(2m+2)\,x^{2m+1}}{\pi\,(2m+1)!\,B_{2m+2}} .
\end{equation}
Its closed form as a M\"obius--Bessel series is given in Theorem \ref{thm:bessel} and proved in Section \ref{sec:special}. We conclude this section with the main result:
\begin{equation}\label{eq:oddfinal}
\sum_{m=0}^{\infty}\Res_{s=-(2m+1)}\frac{\Gamma(s)x^{-s}}{\zetas}=\pi\,\kappa(x) .
\end{equation}

\section{Calculation of the residues at the even negative integers}\label{sec:even}

At the negative even integers $\zetas$ has simple zeros and $\Gammas$ has simple poles, so the function $\Gamma(s)x^{-s}/\zetas$ has poles of order $2$ at the negative even integers. Therefore
\begin{equation}\label{eq:evenresdef}
\Res_{s=-2m}\ \frac{\Gamma(s)x^{-s}}{\zetas}=\lim_{s\to-2m}\dds\!\left(\frac{(s+2m)^{2}\,\Gammas\,x^{-s}}{\zetas}\right) .
\end{equation}
Now, using \eqref{eq:reflection} and \eqref{eq:funceq}, we can write
\begin{equation}\label{eq:evensplit}
\frac{(s+2m)^{2}\,\Gammas\,x^{-s}}{\zetas}
=\frac{(s+2m)^{2}\,x^{-s}\,\pi}{2^{s}\pi^{s-1}\,\Gamma(1-s)^{2}\,\zeta(1-s)\,\sin\big(\tfrac{1}{2}\pi s\big)\sin(\pi s)} .
\end{equation}
Let us denote
\begin{equation}\label{eq:fdef}
f(s)=\frac{\pi\,x^{-s}}{2^{s}\pi^{s-1}\,\Gamma(1-s)^{2}\,\zeta(1-s)}
=\frac{\pi^{2}}{(2\pi x)^{s}\,\Gamma(1-s)^{2}\,\zeta(1-s)} .
\end{equation}
Then for $\sigma<0$ the function $f(s)$ is analytic with no zeros and no poles, and we have the limit
\begin{multline}\label{eq:productrule}
\lim_{s\to-2m}\dds\!\left(f(s)\,\frac{(s+2m)^{2}}{\sin(\tfrac{1}{2}\pi s)\sin(\pi s)}\right)\\
=\lim_{s\to-2m}\left(f'(s)\,\frac{(s+2m)^{2}}{\sin(\tfrac{1}{2}\pi s)\sin(\pi s)}
+f(s)\,\dds\frac{(s+2m)^{2}}{\sin(\tfrac{1}{2}\pi s)\sin(\pi s)}\right).
\end{multline}
As is easily seen,
\begin{equation}\label{eq:limsin1}
\lim_{s\to-2m}\frac{s+2m}{\sin(\pi s)}=\frac{1}{\pi\cos(2\pi m)}=\frac{1}{\pi} ,
\end{equation}
and
\begin{equation}\label{eq:limsin2}
\lim_{s\to-2m}\frac{s+2m}{\sin(\tfrac{1}{2}\pi s)}=\frac{1}{\tfrac{1}{2}\pi\cos(\pi m)}=(-1)^{m}\,\frac{2}{\pi} ,
\end{equation}
so that, multiplying \eqref{eq:limsin1} and \eqref{eq:limsin2},
\begin{equation}\label{eq:limsinprod}
\lim_{s\to-2m}\frac{(s+2m)^{2}}{\sin(\tfrac{1}{2}\pi s)\sin(\pi s)}=\frac{2\,(-1)^{m}}{\pi^{2}} .
\end{equation}
Next we differentiate $f$. Since $f$ is nonvanishing on the simply connected half-plane $\operatorname{Re}s<0$, compatible holomorphic logarithms may be chosen there, with real values on the negative real axis. Taking logarithms in \eqref{eq:fdef},
\begin{equation}\label{eq:logf}
\log f(s)=2\log\pi-s\log(2\pi x)-2\log\Gamma(1-s)-\log\zeta(1-s) ,
\end{equation}
and therefore
\begin{equation}\label{eq:fprime}
f'(s)=f(s)\left(\frac{\zeta'(1-s)}{\zeta(1-s)}+\frac{2\,\Gamma'(1-s)}{\Gamma(1-s)}-\log(2\pi x)\right),
\end{equation}
since \[\dds\big(-s\log(2\pi x)\big)=-\log(2\pi x),\] \[\dds\big(-2\log\Gamma(1-s)\big)=2\psi(1-s)\] and \[\dds\big(-\log\zeta(1-s)\big)=\zeta'(1-s)/\zeta(1-s).\] Substituting $s=-2m$ we obtain
\begin{multline}\label{eq:fprimeval}
f'(-2m)=\frac{\pi^{2}\,(2\pi x)^{2m}}{[(2m)!]^{2}\,\zeta(2m+1)}\\
\times\left(\frac{\zeta'(2m+1)}{\zeta(2m+1)}+\frac{2\,\Gamma'(2m+1)}{(2m)!}-\log(2\pi x)\right).
\end{multline}
The second term in \eqref{eq:productrule} vanishes because
\begin{equation}\label{eq:secondzero}
\lim_{s\to-2m}\dds\,\frac{(s+2m)^{2}}{\sin(\tfrac{1}{2}\pi s)\sin(\pi s)}=0 .
\end{equation}
Indeed, writing $w=s+2m$, both $\sin(\pi s)=\sin(\pi w)$ and \[\sin(\tfrac{1}{2}\pi s)=(-1)^{m}\sin(\tfrac{1}{2}\pi w)\] are odd functions of $w$, so their product is an even function of $w$ with a double zero at $w=0$; hence there are coefficients $c_{0}\neq0,c_{2},c_{4},\dots$ such that near $w=0$
\begin{equation}\label{eq:evenexp}
\frac{w^{2}}{\sin(\tfrac{1}{2}\pi s)\sin(\pi s)}=c_{0}+c_{2}w^{2}+c_{4}w^{4}+\cdots ,
\end{equation}
an even analytic function of $w$ whose derivative vanishes at $w=0$. This proves \eqref{eq:secondzero}.

Combining \eqref{eq:productrule}, \eqref{eq:limsinprod}, \eqref{eq:fprimeval} and \eqref{eq:secondzero}, we obtain the residue at the double pole:
\begin{multline}\label{eq:evenres}
\Res_{s=-2m}\ \frac{\Gamma(s)x^{-s}}{\zetas}
=\frac{2\,(-1)^{m}(2\pi x)^{2m}}{[(2m)!]^{2}\,\zeta(2m+1)}\\
\times\left(\frac{\zeta'(2m+1)}{\zeta(2m+1)}+\frac{2\,\Gamma'(2m+1)}{(2m)!}-\log(2\pi x)\right).
\end{multline}
An instructive consistency check, the direct Laurent computation of the same residue through the expansions of $\Gamma$ and of $1/\zeta$ at\\ $s=-2m$ together with the functional-equation evaluations of $\zeta'(-2m)$ and $\zeta''(-2m)$, is carried out in full in Appendices \ref{app:gammalaurent}--\ref{app:sanity} and reproduces \eqref{eq:evenres} exactly.

We now recall the functions $\lambda(x)$ and $\upsilon(x)$ of \eqref{eq:deflamups} and the function $\beta(x)$ of \eqref{eq:defkappabeta}. Summing \eqref{eq:evenres} over $m\geq1$ and separating the three terms in the bracket, we conclude this section with the final result:
\begin{equation}\label{eq:evenfinal}
\sum_{m=1}^{\infty}\Res_{s=-2m}\frac{\Gamma(s)x^{-s}}{\zetas}
=2\lambda(x)+4\upsilon(x)-2\beta(x)\log(2\pi x) .
\end{equation}
Finally, combining \eqref{eq:critsum}, \eqref{eq:oddfinal} and \eqref{eq:evenfinal}: if all the nontrivial zeros of the Riemann zeta function are simple, then the sum of the residues of $\Gamma(s)x^{-s}/\zetas$ over all poles with $\operatorname{Re}s<\tfrac32$ is
\begin{equation}\label{eq:allres}
\sum_{\rho}\frac{\Gamma(\rho)\,x^{-\rho}}{\zeta'(\rho)}
+\pi\kappa(x)+2\lambda(x)+4\upsilon(x)-2\beta(x)\log(2\pi x)-2 .
\end{equation}

\section{The integral}\label{sec:integral}

We shift the line in \eqref{eq:mellininv} to the left. Constants below may depend on fixed $x>0$ and $\sigma_0>1$, but not on $N$ or $t$.

\subsection{The reflected representation}\label{subsec:reflected}
First let us observe that, by \eqref{eq:reflection},
\begin{equation}\label{eq:gammarefl}
\Gamma(s)=\frac{\pi}{\sin(\pi s)\,\Gamma(1-s)} ,
\end{equation}
and by the functional equation \eqref{eq:funceq},
\begin{equation}\label{eq:zetarefl}
\frac{1}{\zeta(s)}=\frac{1}{2^{s}\pi^{s-1}\sin(\tfrac{1}{2}\pi s)\,\Gamma(1-s)\,\zeta(1-s)} .
\end{equation}
This implies, for the integrand $F(s)=\Gamma(s)x^{-s}/\zetas$,
\begin{equation}\label{eq:Frefl}
F(s)=\frac{\pi^{2}\,(2\pi x)^{-s}}{\sin(\pi s)\,\sin(\tfrac{1}{2}\pi s)\,\Gamma(1-s)^{2}\,\zeta(1-s)} .
\end{equation}
The representation \eqref{eq:Frefl} will be used on the part of the contour with $\sigma\le0$, where it converts all the quantities to be estimated into quantities evaluated in the half-plane $\operatorname{Re}(1-s)\geq1$.

The elementary identity
\begin{equation}\label{eq:sinsigit2}
 |\sin(\sigma+it)|^2=\sin^2\sigma+\sinh^2t
\end{equation}
gives the exact vertical-line estimates

\begin{align}
\abs{\sin\big(\pi(\tfrac12-2N+it)\big)}&=\cosh(\pi t)\geq\tfrac12 e^{\pi\abs{t}},\label{eq:sinexact}\\
\abs{\sin\big(\tfrac{\pi}{2}(\tfrac12-2N+it)\big)}&=\sqrt{\tfrac12\cosh(\pi t)}\geq\tfrac{1}{\sqrt2} ,\label{eq:sinexact2}
\end{align}
and, on horizontal lines $t=\pm T$ with $T\geq1$,
\begin{align}
\abs{\sin\big(\pi(\sigma\pm iT)\big)}&\geq\sinh(\pi T)\geq\tfrac14 e^{\pi T},\label{eq:sinhoriz}\\
\abs{\sin\big(\tfrac{\pi}{2}(\sigma\pm iT)\big)}&\geq\sinh\big(\tfrac{\pi T}{2}\big)\geq\tfrac14 e^{\pi T/2} .\label{eq:sinhoriz2}
\end{align}
Appendix~\ref{app:sines} records the derivations.

\subsection{Two lemmas}\label{subsec:lemmas}
The first lemma replaces the asymptotic estimate \eqref{eq:stirling}, which is valid for fixed $\sigma$ only, by a lower bound valid uniformly in $\sigma$; this uniformity is exactly what is needed on the moving line $\sigma=\half-2N$.

\begin{lem}[Uniform lower bound for $\Gamma$ on vertical lines]\label{lem:gammalower}
For every $\sigma>0$ and every real $t$,
\begin{equation}\label{eq:gammalower}
\abs{\Gamma(\sigma+it)}\;\geq\;\Gamma(\sigma)\left(1+\frac{t^{2}}{\sigma^{2}}\right)^{-1/2}e^{-\pi\abs{t}/2} .
\end{equation}
\end{lem}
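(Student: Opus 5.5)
Proof plan. We prove \eqref{eq:gammalower} from the Weierstrass product for $1/\Gamma$. Fix $\sigma>0$. The product formula gives
\[
\abs{\frac{\Gamma(\sigma)}{\Gamma(\sigma+it)}}^{2}
=\prod_{n=0}^{\infty}\left(1+\frac{t^{2}}{(\sigma+n)^{2}}\right).
\]
The factors $e^{\gamma_e s}$ and $e^{-s/n}$ in the product cancel in modulus, since they involve only $\operatorname{Re}s=\sigma$, which is the same for $s=\sigma$ and $s=\sigma+it$. This standard identity also follows from $\Gamma(s)=\lim_{N\to\infty}N!N^{s}/(s(s+1)\cdots(s+N))$. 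The inequality \eqref{eq:gammalower} is then equivalent to
\[
\prod_{n=1}^{\infty}\left(1+\frac{t^{2}}{(\sigma+n)^{2}}\right)\le e^{\pi\abs t}.
\]
Indeed, the $n=0$ factor is exactly $(1+t^{2}/\sigma^{2})$, which is the factor that survives in the statement.

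The key step is a monotonicity comparison. Each factor $1+t^{2}/(\sigma+n)^{2}$ is decreasing in $\sigma$. So for $\sigma>0$ the tail product over $n\ge1$ is at most its value at $\sigma=0$, namely $\prod_{n\ge1}(1+t^{2}/n^{2})$. By the sine product this equals
\[
\frac{\sinh(\pi\abs t)}{\pi\abs t}
\]
for $t\neq0$. Finally we use the elementary bound $\sinh(u)/u\le e^{u}$ for $u>0$. It follows from $\sinh u\le \tfrac12 e^{u}$ when $u\ge\tfrac12$, and from $\sinh u/u\le\cosh u\le e^{u}$ in general, the latter being $\tanh u\le u$. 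Taking square roots and inverting gives \eqref{eq:gammalower}. The case $t=0$ is trivial.

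The only genuinely delicate point is justifying the modulus identity for the infinite product with the right normalization. It is absolutely convergent because $\sum t^{2}/(\sigma+n)^{2}<\infty$. Everything else is a sign and monotonicity check.

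Alternative route if the product identity is to be avoided: use the exact formula $\abs{\Gamma(it)}^{2}=\pi/(t\sinh\pi t)$. Combine it with the recursion $\Gamma(s+1)=s\Gamma(s)$ and log-convexity in $\sigma$. This is messier, so I would commit to the product argument above.
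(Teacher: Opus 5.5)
Your proof is correct. It shares the paper's starting point, the modulus identity \eqref{eq:gammaproduct} from the Weierstrass product, but you bound the tail $\prod_{n\ge1}\bigl(1+t^2/(n+\sigma)^2\bigr)$ by a different device.

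The paper takes logarithms and compares the sum with $\int_0^\infty\log(1+t^2/u^2)\,du$: term $n$ is dominated by the integral over $(\sigma+n-1,\sigma+n)$. It then evaluates that integral as $\pi|t|$ by substitution and integration by parts, checking the boundary terms.

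You instead use monotonicity in $\sigma$ to pass to $\sigma=0$. You evaluate the resulting product exactly as $\sinh(\pi|t|)/(\pi|t|)$ by Euler's sine product, and finish with $\sinh u/u\le e^u$.

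The paper's route needs only the Weierstrass product and the fact that $u\mapsto\log(1+t^2/u^2)$ is decreasing; no closed form for the product is required. Yours replaces the integral computation with a second classical product formula, and it is actually sharper. Before the final inequality it gives
\[
|\Gamma(\sigma+it)|\ge\Gamma(\sigma)\,\frac{\sigma}{|\sigma+it|}\,|\Gamma(1+it)|,
\qquad
|\Gamma(1+it)|^2=\frac{\pi|t|}{\sinh(\pi|t|)},
\]
which improves \eqref{eq:gammalower} by a factor of order $\sqrt{|t|}$ for large $|t|$. That gain does not matter for the contour estimates of Section~\ref{sec:integral}, where only the exponential factor $e^{-\pi|t|/2}$ is needed to cancel the sine growth. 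Either argument therefore serves the paper's purposes.
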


\begin{proof}
The Weierstrass product \cite[Chapter~XII]{WW}, applied at $\sigma+it$ and $\sigma$, gives
\begin{equation}\label{eq:gammaproduct}
\frac{|\Gamma(\sigma+it)|^2}{\Gamma(\sigma)^2}
 =\prod_{n=0}^{\infty}\left(1+\frac{t^2}{(n+\sigma)^2}\right)^{-1}.
\end{equation}
Since $u\mapsto\log(1+t^2/u^2)$ is nonnegative and decreasing,
\begin{equation}\label{eq:sumint}
\sum_{n=0}^{\infty}\log\left(1+\frac{t^2}{(n+\sigma)^2}\right)
\leq\log\left(1+\frac{t^2}{\sigma^2}\right)
 +\int_0^\infty\log\left(1+\frac{t^2}{u^2}\right)du.
\end{equation}
Substitution and integration by parts evaluate the integral as $\pi|t|$. Exponentiation proves the bound. Appendix~\ref{app:gammaproduct} supplies the product calculation and the vanishing boundary terms.
\end{proof}

The second lemma provides horizontal lines that avoid the zeros of $\zetas$ quantitatively; it is classical.

\begin{lem}[Ordinate selection]\label{lem:ordinates}
There are an absolute constant $A_{0}>0$ and heights $T_{\nu}\in[\nu,\nu+1]$, $\nu=2,3,4,\dots$, such that
\begin{equation}\label{eq:ordbound}
\frac{1}{\abs{\zeta(\sigma\pm iT_{\nu})}}\;\leq\;T_{\nu}^{\,A_{0}}
\qquad(-1\leq\sigma\leq2).
\end{equation}
In particular $\zeta$ has no zero on the two lines $t=\pm T_{\nu}$, so no singularity of $F$ lies on them.
\end{lem}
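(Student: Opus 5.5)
The plan is to follow the classical route of Titchmarsh's Theorem 9.7 and the argument behind Theorem 14.27, treating the two ranges of $\sigma$ separately.

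\textbf{Step 1: the strip $-1\le\sigma\le2$.} This is where the whole difficulty lies. I will use the standard local representation of $\zeta'/\zeta$. For $|t|\ge2$ and $-1\le\sigma\le2$,
\[
\frac{\zeta'}{\zeta}(\sigma+it)=\sum_{|\gamma-t|<1}\frac{1}{s-\rho}+O(\log|t|),
\]
where the sum has $O(\log|t|)$ terms, because $N(t+1)-N(t)=O(\log t)$. This follows from the Hadamard product and the functional equation; we take it as classical.

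\textbf{Step 2: choosing $T_\nu$.} Among the $O(\log\nu)$ ordinates $\gamma\in[\nu,\nu+1]$, pigeonholing produces a height $T_\nu\in[\nu,\nu+1]$ with $|T_\nu-\gamma|\gg1/\log\nu$ for every zero ordinate $\gamma$. The same choice works for $-T_\nu$ by the conjugate symmetry of the zeros. With this choice every term in the local sum is $O(\log\nu)$, so
\[
\frac{\zeta'}{\zeta}(\sigma\pm iT_\nu)=O(\log^2 T_\nu)
\]
uniformly in $-1\le\sigma\le2$.

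\textbf{Step 3: integrating horizontally.} I integrate $\zeta'/\zeta$ along the segment from $2\pm iT_\nu$ to $\sigma\pm iT_\nu$. This gives
\[
\log|\zeta(\sigma\pm iT_\nu)|\ge\log|\zeta(2\pm iT_\nu)|-O(\log^2T_\nu).
\]
The lower bound \eqref{eq:zetalow} gives $\log|\zeta(2\pm iT_\nu)|\ge\log\delta(2)$. Hence
\[
1/|\zeta(\sigma\pm iT_\nu)|\le\exp(C\log^2T_\nu).
\]

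\textbf{Main obstacle: upgrading to a power bound.} The estimate from Step 3 is not polynomial. To reach $T^{A_0}$ I need the sharper information on the chosen segment, and I will try two routes in order.

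\emph{First route.} Write
\[
\log\zeta(s)=\sum_{|\gamma-t|<1}\log(s-\rho)+O(\log|t|),
\]
which is Titchmarsh's Theorem 9.6(B) form. Each $\log|s-\rho|$ is at least $-\log\log T_\nu-O(1)$ by the spacing chosen in Step 2. There are $O(\log T_\nu)$ such terms, so
\[
\log|\zeta|\ge -O(\log T_\nu\,\log\log T_\nu).
\]
This is still slightly worse than a power.

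\emph{Second route.} Refine the selection. The number of zeros with $|\gamma-\nu|<1$ is at most $K\log\nu$. Partition $[\nu,\nu+1]$ into $2K\log\nu+2$ equal subintervals and pick the midpoint of one containing no ordinate. Then for each zero at distance index $j$ (counting subintervals away) we have $|T_\nu-\gamma|\ge j/(4K\log\nu)$. Summing gives
\[
\sum_j\log\frac{4K\log\nu}{j}\le C\log\nu,
\]
because the sum of $\log(L/j)$ over $j\le L$ is $O(L)$ by Stirling's formula. This yields $\log|\zeta|\ge -A_0\log T_\nu$, which is what we want.

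\textbf{Step 4: the constant and the final assertion.} The bound extends to $\sigma\in[-1,2]$ because the local formula is uniform there. The constant $A_0$ is absolute, since nothing in the argument depends on $x$ or $\sigma_0$. The statement that $\zeta$ has no zeros on the lines $t=\pm T_\nu$ is immediate from the finiteness of the bound.
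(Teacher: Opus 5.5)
Your framework is the right one. The paper proves this lemma only by citing Titchmarsh's Theorem~9.7, with conjugation for the lower sign. That theorem is proved by combining Theorem~9.6(B) with $|s-\rho|\ge|t-\gamma|$, which gives $\log|\zeta(\sigma+it)|\ge\sum_{|t-\gamma|<1}\log|t-\gamma|-O(\log t)$ uniformly for $-1\le\sigma\le2$, and then choosing $t$ well. Steps~1--3 and your first route are correct, but as you note they only reach $\exp(C\log^2T)$ and $\exp(C\log T\log\log T)$.

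The genuine gap is in the second route, the only step meant to give a power. You fix an \emph{arbitrary} empty subinterval and then bound the total by $\sum_j\log(4K\log\nu/j)$. This tacitly assumes that only boundedly many ordinates lie at each index distance $j$, and nothing justifies it. Unconditionally one only knows $N(t+h)-N(t)=O(\log t)$ for $0<h\le1$. So it cannot be excluded that the subinterval adjacent to yours contains $\asymp\log\nu$ ordinates, all within distance $\asymp1/\log\nu$ of $T_\nu$. Their contribution alone is $\asymp-\log\nu\,\log\log\nu$, so the second route as written gives nothing beyond the first. A minor additional point: ordinates in $[\nu-1,\nu)\cup(\nu+1,\nu+2]$ also enter the local sum but are not covered by your partition of $[\nu,\nu+1]$.

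The missing idea is to select the height by averaging, not by emptiness alone. For every ordinate $\gamma$,
\[
\int_\nu^{\nu+1}\min\bigl(0,\log|t-\gamma|\bigr)\,dt\ \ge\ \int_{-1/2}^{1/2}\log|u|\,du=-1-\log2 .
\]
For $t\in[\nu,\nu+1]$, the local sum $\sum_{|t-\gamma|<1}\log|t-\gamma|$ equals $\sum_{\nu-1\le\gamma\le\nu+2}\min\bigl(0,\log|t-\gamma|\bigr)$, a sum of $O(\log\nu)$ terms. Its mean over $[\nu,\nu+1]$ is therefore at least $-C\log\nu$. Hence some $T_\nu\in[\nu,\nu+1]$, necessarily not an ordinate, satisfies $\log|\zeta(\sigma+iT_\nu)|\ge-A_0\log T_\nu$ for all $-1\le\sigma\le2$, and conjugation handles $-T_\nu$. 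This is Titchmarsh's argument.

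Your discrete scheme can be repaired the same way. At least half of the $2K\log\nu+2$ subintervals are empty. Averaging your nonnegative index sums over all subintervals costs $O(1)$ per ordinate, so \emph{some} empty subinterval has total $O(\log\nu)$. Without an averaging step of this kind, however, the power bound does not follow.
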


\begin{rem}[The selected heights]\label{rem:ordsource}
Lemma~\ref{lem:ordinates} is \cite[Theorem~9.7]{Titchmarsh}; conjugation gives both signs. Its polynomial bound is used only on the near horizontal segments, where Gamma decay dominates it. The other segments use \eqref{eq:zetalow}, directly or after reflection.
\end{rem}

\subsection{Proof of Theorem \ref{thm:main}}\label{subsec:proofmain}
Fix $x>0$ and set $\sigma_{0}=\tfrac32$. For $N\geq2$ let $T_{N}$ be as in Lemma \ref{lem:ordinates} and let $R_{N}$ denote the positively oriented rectangle with vertices
\begin{equation}\label{eq:rectangle}
\sigma_{0}\pm iT_{N},\qquad\big(\tfrac12-2N\big)\pm iT_{N} .
\end{equation}
The boundary avoids all poles. The interior contains the nontrivial zeros with $|\gamma|<T_N$, the pole at $0$, the odd-negative poles $-(2m+1)$ for $0\leq m\leq N-1$, and the even-negative poles $-2m$ for $1\leq m\leq N-1$. At $s=1$ the integrand vanishes. With the orientation shown in Figure~\ref{fig:contour}, the residue theorem gives
\begin{equation}\label{eq:resthm}
\frac{1}{2\pi i}\oint_{R_{N}}F(s)\,ds
=\sum_{\abs{\gamma}<T_{N}}\frac{\Gamma(\rho)x^{-\rho}}{\zeta'(\rho)}-2
+\sum_{m=0}^{N-1}\Res_{-(2m+1)}F+\sum_{m=1}^{N-1}\Res_{-2m}F .
\end{equation}

We now show that, as $N\to\infty$, the right edge $\mathcal{C}_{1}$ of $R_{N}$ produces the integral \eqref{eq:mellininv} while the remaining three segments $\mathcal{C}_{2}$, $\mathcal{C}_{3}$ and $\mathcal{C}_{4}$ tend to zero.

\medskip
\noindent\emph{Segment $\mathcal C_1$: the right edge.}
On $\sigma=\sigma_0$, \eqref{eq:absint} proves absolute integrability. More explicitly, the omitted tail is bounded by a constant times
\[
 x^{-\sigma_0}\int_{T_N}^\infty e^{-\pi t/2}t^{\sigma_0-1/2}\,dt
 =x^{-\sigma_0}(2/\pi)^{\sigma_0+1/2}
   \Gamma(\sigma_0+\tfrac12,\tfrac\pi2T_N),
\]
which tends to zero. Here $\Gamma(a,z)=\int_z^\infty e^{-u}u^{a-1}\,du$ is the upper incomplete Gamma function \cite[Section~8.2]{DLMF}. Hence
\begin{equation}\label{eq:rightedge}
\frac{1}{2\pi i}\int_{\sigma_{0}-iT_{N}}^{\sigma_{0}+iT_{N}}F(s)\,ds
\longrightarrow\frac{1}{2\pi i}\int_{\sigma_{0}-i\infty}^{\sigma_{0}+i\infty}F(s)\,ds=\Phi(e^{-x})
\qquad(N\to\infty),
\end{equation}
by \eqref{eq:mellininv}.

\begin{figure}[t]
\centering
\begin{tikzpicture}[scale=0.86, >=stealth]
\fill[gray!12] (0,-4.4) rectangle (1,4.4);
\draw[->,thick] (-7.6,0) -- (2.6,0) node[right] {$\operatorname{Re}s$};
\draw[->,thick] (0,-4.9) -- (0,4.9) node[above] {$\operatorname{Im}s$};
\foreach \y/\lab in {4/T_{N},3.05/T_{N-1},2.35/T_{N-2}} {
  \draw[very thick] (-0.09,\y) -- (0.09,\y);
  \node[left=1pt, fill=white, inner sep=1.5pt] at (-0.12,\y) {$\lab$};
}
\foreach \y/\lab in {-4/-T_{N},-3.05/-T_{N-1}} {
  \draw[very thick] (-0.09,\y) -- (0.09,\y);
  \node[left=1pt, fill=white, inner sep=1.5pt] at (-0.12,\y) {$\lab$};
}
\node[left] at (-0.09,1.55) {$\vdots$};
\node[left] at (-0.09,-1.9) {$\vdots$};
\draw[contourRight, very thick, ->] (1.5,-4) -- node[right=2pt, pos=0.78] {$\mathcal{C}_{1}:\ \sigma=\sigma_{0}$} (1.5,4);
\draw[contourTop, very thick, ->] (1.5,4) -- node[above=2pt] {$\mathcal{C}_{2}:\ t=+T_{N}$} (-6.5,4);
\draw[contourLeft, very thick, ->] (-6.5,4) -- node[pos=0.27, right=2pt, rotate=90, anchor=south] {$\mathcal{C}_{3}:\ \sigma=\tfrac12-2N$} (-6.5,-4);
\draw[contourBottom, very thick, ->] (-6.5,-4) -- node[below=2pt] {$\mathcal{C}_{4}:\ t=-T_{N}$} (1.5,-4);
\foreach \y in {0.9,1.75,2.7,3.5,-0.9,-1.75,-2.7,-3.5} {
  \draw[thick] (0.5-0.09,\y-0.09) -- (0.5+0.09,\y+0.09);
  \draw[thick] (0.5+0.09,\y-0.09) -- (0.5-0.09,\y+0.09);
}
\node[right=1pt] at (0.6,2.7) {$\rho$};
\foreach \x in {0,-1,-3,-5} \fill (\x,0) circle (0.075);
\foreach \x in {-2,-4,-6} { \fill (\x,0) circle (0.075); \draw (\x,0) circle (0.16); }
\node[below=3pt] at (-1,0) {$-1$};
\node[below=3pt] at (-2,0) {$-2$};
\node[below=3pt] at (-4,0) {$-4$};
\node[below=3pt] at (-6,0) {$-6$};
\draw[very thick] (1.5,-0.09) -- (1.5,0.09);
\node[anchor=west] at (1.62,0.75) {$\sigma_{0}=\tfrac32$};
\draw (1,0) circle (0.075);
\node[below=3pt] at (1,0) {$1$};
\end{tikzpicture}
\caption{The positively oriented contour $R_N$ of \eqref{eq:rectangle}. The critical strip is shaded. Crosses indicate nontrivial zeros, drawn on the critical line only for illustration; RH is not assumed. Dots mark Gamma poles, and circled dots mark double poles where Gamma poles meet trivial zeta zeros. The open circle at $s=1$ is a regular point.}
\label{fig:contour}
\end{figure}

\medskip
\noindent\emph{Segment $\mathcal C_3$: the left edge.}
Write $s=\tfrac12-2N+it$, $|t|\leq T_N$, in \eqref{eq:Frefl}. Then $|(2\pi x)^{-s}|=(2\pi x)^{2N-1/2}$ and $\operatorname{Re}(1-s)=2N+\tfrac12\geq\tfrac52$. Lemma~\ref{lem:gammalower} gives
\begin{equation}\label{eq:gammaleft}
\abs{\Gamma(1-s)}\geq\Gamma\big(2N+\tfrac12\big)\big(1+t^{2}\big)^{-1/2}e^{-\pi\abs{t}/2} ,
\end{equation}
using $1+t^2/(1-\sigma)^2\leq1+t^2$. Also $|\zeta(1-s)|\geq\zeta(5)/\zeta(5/2)=:c_0>0$. With \eqref{eq:sinexact}--\eqref{eq:sinexact2}, these bounds yield
\begin{align}
\abs{F(s)}
&\leq\frac{\pi^{2}\,(2\pi x)^{2N-\frac12}\cdot 2e^{-\pi\abs{t}}\cdot\sqrt2\cdot(1+t^{2})\,e^{\pi\abs{t}}}{\Gamma(2N+\tfrac12)^{2}\,c_{0}}\label{eq:leftbound}\\
&=C_{2}\,\frac{(2\pi x)^{2N}\,(1+t^{2})}{\Gamma(2N+\tfrac12)^{2}} .\notag
\end{align}
The exponential factors cancel. Since $T_N\leq N+1$, integration gives
\begin{equation}\label{eq:leftedge}
\abs{\int_{\text{left edge}}F(s)\,ds}
\leq C_{3}\,\frac{(2\pi x)^{2N}\,N^{3}}{\Gamma(2N+\tfrac12)^{2}}\longrightarrow0\qquad(N\to\infty),
\end{equation}
superexponentially, for every fixed $x>0$, since the factorial in the denominator dominates any fixed exponential.

\medskip
\noindent\emph{Segments $\mathcal C_2$ and $\mathcal C_4$: the horizontal edges.}
On $t=\pm T_N$, split the integral at $\sigma=-1$. For $-1\leq\sigma\leq\sigma_0$, the ordinate bound gives $|\zeta(s)|^{-1}\leq T_N^{A_0}$. Stirling's formula, uniformly on this fixed strip, gives
\begin{equation}\label{eq:stirlingcompact}
\abs{\Gamma(\sigma\pm iT)}\leq C_{4}\,T^{\sigma_{0}-\frac12}\,e^{-\pi T/2}
\qquad(-1\leq\sigma\leq\sigma_{0},\ T\geq1),
\end{equation}
together with $x^{-\sigma}\leq\max(x^{-3/2},x)$, yields
\begin{equation}\label{eq:horiznear}
\abs{\int_{-1}^{\sigma_{0}}F(\sigma\pm iT_{N})\,d\sigma}
\leq C_{5}(x)\,T_{N}^{\,A_{0}+\sigma_{0}-\frac12}\,e^{-\pi T_{N}/2}\longrightarrow0\qquad(N\to\infty),
\end{equation}
the exponential factor $e^{-\pi T_{N}/2}$ dominating the fixed power of $T_{N}$.

For $\tfrac12-2N\leq\sigma\leq-1$ we use \eqref{eq:Frefl} again. By \eqref{eq:sinhoriz} and \eqref{eq:sinhoriz2}, by Lemma \ref{lem:gammalower} applied at $\operatorname{Re}(1-s)=1-\sigma\geq2$,
\begin{equation}\label{eq:gammahoriz}
\abs{\Gamma(1-s)}\geq\Gamma(1-\sigma)\,(1+T_{N}^{2})^{-1/2}\,e^{-\pi T_{N}/2},
\end{equation}
and $|\zeta(1-s)|\geq\zeta(4)/\zeta(2)$. Thus
\begin{align}
\abs{F(\sigma\pm iT_{N})}
&\leq C_{6}\;\frac{(2\pi x)^{-\sigma}\,(1+T_{N}^{2})}{\Gamma(1-\sigma)^{2}}\,
e^{-\pi T_{N}-\frac{\pi T_{N}}{2}+\pi T_{N}}\label{eq:horizfar1}\\
&=C_{6}\,\frac{(2\pi x)^{-\sigma}\,(1+T_{N}^{2})}{\Gamma(1-\sigma)^{2}}\,e^{-\pi T_{N}/2} ,\notag
\end{align}
To integrate, enlarge the interval to $\sigma\leq-1$ and substitute $u=1-\sigma$:

\begin{equation}\label{eq:Ax}
W(x):=\int_{-\infty}^{-1}\frac{(2\pi x)^{-\sigma}}{\Gamma(1-\sigma)^{2}}\,d\sigma
=\int_{2}^{\infty}\frac{(2\pi x)^{u-1}}{\Gamma(u)^{2}}\,du\;<\;\infty ,
\end{equation}
The integral is finite by Stirling's formula on the positive real axis. Therefore
\begin{equation}\label{eq:horizfar2}
\abs{\int_{\frac12-2N}^{-1}F(\sigma\pm iT_{N})\,d\sigma}
\leq C_{6}\,W(x)\,(1+T_{N}^{2})\,e^{-\pi T_{N}/2}\longrightarrow0\quad(N\to\infty).
\end{equation}

\medskip
\begin{proof}[Completion of the contour argument]
By \eqref{eq:rightedge}, \eqref{eq:leftedge}, \eqref{eq:horiznear}, and \eqref{eq:horizfar2}, the contour integral in \eqref{eq:resthm} tends to $\Phi(e^{-x})$. The trivial-pole series converge absolutely by their squared-factorial denominators (Lemma~\ref{lem:entire}); Sections~\ref{sec:odd}--\ref{sec:even} identify their sums. The remaining term, the symmetric zero sum, must consequently converge as well. Rearranging proves \eqref{eq:mainformula}. The coefficient calculation in Theorem~\ref{thm:unified} then proves \eqref{eq:explicit-unified}, completing Theorem~\ref{thm:main}.
\end{proof}

\begin{rem}[Multiple zeros]\label{rem:mult}
Hypothesis (S) enters only through the closed form \eqref{eq:resrho}. Without it, each distinct zero $\rho$ of multiplicity $m_{\rho}$ contributes once:
\begin{equation}\label{eq:multres}
\frac{1}{(m_{\rho}-1)!}\;\lim_{s\to\rho}\frac{d^{\,m_{\rho}-1}}{ds^{\,m_{\rho}-1}}
\left((s-\rho)^{m_{\rho}}\,\frac{\Gamma(s)\,x^{-s}}{\zeta(s)}\right),
\end{equation}
and Theorem \ref{thm:main} holds unconditionally in this form, exactly as in the unconditional treatment of the Mertens formula by Bartz \cite{Bartz1991}.
\end{rem}

\subsection{A quantitative refinement on the same contour}
On the half-integer lines used in the preceding proof, a second estimate
gives an explicit remainder and makes the dependence on $x$ uniform
on compact subintervals of $(0,\infty)$.

\begin{prop}[Half-integer estimate and left-line remainder]\label{prop:leftremainder}
For $N\geq1$ and $t\in\mathbb R$,
\begin{equation}\label{eq:halfintgamma}
 \abs{\Gamma(2N+\tfrac12-it)}
 \geq\frac{\Gamma(2N+\tfrac12)}{\sqrt{\cosh(\pi t)}}.
\end{equation}
Consequently, for $x>0$,
\begin{multline}\label{eq:leftremainder}
\abs{\frac{1}{2\pi i}\int_{\frac12-2N-i\infty}^{\frac12-2N+i\infty}
 \frac{\Gamma(s)x^{-s}}{\zeta(s)}\,ds}
 \leq C_*\frac{(2\pi x)^{2N-\frac12}}{\Gamma(2N+\frac12)^2},\\
 C_* =\sqrt{\frac\pi2}\,
 \frac{\zeta(5/2)}{\zeta(5)}\,
 \frac{\Gamma(1/4)}{\Gamma(3/4)}.
\end{multline}
The symmetric zero sum in Theorem~\ref{thm:main} converges locally
uniformly for $x>0$.
\end{prop}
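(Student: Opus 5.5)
Half-integer bound: use the Weierstrass-type product of Lemma~\ref{lem:gammalower}, namely $|\Gamma(a+it)|^2/\Gamma(a)^2=\prod_{n\ge0}(1+t^2/(n+a)^2)^{-1}$, with $a=2N+\tfrac12$. Each factor is increasing in $a$, so the product is increasing in $a$. Comparing with $a=\tfrac12$ gives
\[
\frac{|\Gamma(2N+\tfrac12+it)|^2}{\Gamma(2N+\tfrac12)^2}\ \ge\ \frac{|\Gamma(\tfrac12+it)|^2}{\Gamma(\tfrac12)^2}=\frac{\pi/\cosh(\pi t)}{\pi}=\frac1{\cosh(\pi t)} .
\]
The middle equality is the reflection formula \eqref{eq:reflection} evaluated at $s=\tfrac12+it$. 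The factor $t\mapsto-t$ does not matter because the bound is even in $t$. This proves \eqref{eq:halfintgamma}.

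Left-line remainder: on $s=\tfrac12-2N+it$, insert \eqref{eq:Frefl}. Use $|(2\pi x)^{-s}|=(2\pi x)^{2N-1/2}$ and $|\sin(\pi s)|=\cosh(\pi t)$ from \eqref{eq:sinexact}. The second line \eqref{eq:sinexact2} gives $|\sin(\tfrac\pi2 s)|=\sqrt{\tfrac12\cosh(\pi t)}$. Next, $|\Gamma(1-s)|^2\ge\Gamma(2N+\tfrac12)^2/\cosh(\pi t)$ by \eqref{eq:halfintgamma}, and $|\zeta(1-s)|\ge\zeta(5)/\zeta(5/2)$ by \eqref{eq:zetalow}, since $\operatorname{Re}(1-s)\ge5/2$. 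Combining these, the $\cosh$ powers collapse:
\[
|F(s)|\le\frac{\pi^2\sqrt2\,\zeta(5/2)}{\zeta(5)}\frac{(2\pi x)^{2N-1/2}}{\Gamma(2N+\tfrac12)^2}\,\cosh(\pi t)^{-1/2}.
\]
Dividing by $2\pi$ and integrating, the constant is $\int_{\mathbb R}\cosh(\pi t)^{-1/2}dt=\pi^{-1}B(\tfrac14,\tfrac12)=\pi^{-1/2}\Gamma(\tfrac14)/\Gamma(\tfrac34)$, a standard beta integral. This yields $C_*$ as displayed; the constant bookkeeping is routine, and I will check it gives exactly $\sqrt{\pi/2}$. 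The integral therefore converges absolutely.

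Local uniform convergence: repeat the rectangle argument of Section~\ref{sec:integral}, but with the left edge replaced by the full line $\operatorname{Re}s=\tfrac12-2N$ for a fixed $N$. First verify that the horizontal segments of \eqref{eq:horiznear} and \eqref{eq:horizfar2} between $\sigma_0$ and $\tfrac12-2N$ tend to $0$ as $T_\nu\to\infty$. Fixing $N$ and letting only $\nu\to\infty$ is harmless, because those bounds hold for any left abscissa. The partial zero sums up to $T_\nu$ then equal $\Phi(e^{-x})$, minus the finitely many trivial residues, minus the left-line integral, plus horizontal errors. For $x$ in a compact $[a,b]\subset(0,\infty)$, the constants $C_5(x)$ and $C_6W(x)$ are bounded by their values at the endpoints, since $x^{-\sigma}$ is monotone in $x$. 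Hence the horizontal errors tend to zero uniformly on $[a,b]$. The main obstacle is this uniformity. The ordinate bound of Lemma~\ref{lem:ordinates} is independent of $x$, so it should go through; the remaining terms are continuous in $x$.
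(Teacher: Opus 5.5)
Your proposal is correct. The left-line estimate is exactly the paper's computation: the sine product $2^{-1/2}\cosh(\pi t)^{3/2}$, the bound $|\zeta(1-s)|\ge\zeta(5)/\zeta(5/2)$, and the beta integral, which does give $\sqrt{\pi/2}$. The other two steps are organized differently.

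For \eqref{eq:halfintgamma}, the paper uses the recurrence $|\Gamma(2N+\tfrac12-it)|=|\Gamma(\tfrac12-it)|\prod_{j=0}^{2N-1}|j+\tfrac12-it|$ and bounds each linear factor below by $j+\tfrac12$. Your monotonicity of \eqref{eq:gammaproduct} in $a$ is the same mechanism: at $a=2N+\tfrac12$ the product is the one at $a=\tfrac12$ with its first $2N$ factors deleted. Your version, however, gives $|\Gamma(a+it)|\ge\Gamma(a)\cosh(\pi t)^{-1/2}$ for every real $a\ge\tfrac12$, not only at half-integers.

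For local uniformity, the paper keeps the rectangles $R_N$, whose left abscissa and height grow together. It therefore needs the left-edge bound to vanish uniformly, and the trivial-residue power series (including $\beta(x)\log(2\pi x)$) to converge uniformly on compacts. You freeze $N$ and let only $T_\nu\to\infty$, which is the viewpoint of the remark after the proposition. Then only finitely many explicit residues appear. You also correctly note that \eqref{eq:horiznear} and \eqref{eq:horizfar2} do not depend on the left abscissa, because $W(x)$ integrates over all $\sigma\le-1$.

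One step needs tightening. At finite height the zero sum equals an expression in the \emph{truncated} right- and left-edge integrals, not in $\Phi(e^{-x})$ and the full line integral. You must therefore show that both vertical tails vanish uniformly on $[a,b]$; continuity in $x$ does not give this. The majorants do. The right tail is at most a constant times $a^{-3/2}\int_{|t|>T_\nu}|\Gamma(\tfrac32+it)|\,dt$. The left tail is at most a constant times $(2\pi b)^{2N-1/2}\int_{|t|>T_\nu}\cosh(\pi t)^{-1/2}\,dt$.
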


\begin{proof}
The reflection identity and the recurrence for $\Gamma$ give
\begin{align*}
 \abs{\Gamma(\tfrac12-it)}^2&=\frac{\pi}{\cosh(\pi t)},\\
 \abs{\Gamma(2N+\tfrac12-it)}
 &=\abs{\Gamma(\tfrac12-it)}
       \prod_{j=0}^{2N-1}\abs{j+\tfrac12-it}\\
 &\geq\sqrt{\frac{\pi}{\cosh(\pi t)}}
       \prod_{j=0}^{2N-1}(j+\tfrac12)
 =\frac{\Gamma(2N+\tfrac12)}{\sqrt{\cosh(\pi t)}}.
\end{align*}
On $s=\frac12-2N+it$, the two sine factors in the reflected
representation have product
$2^{-1/2}\cosh(\pi t)^{3/2}$. Also
$\abs{\zeta(1-s)}\geq\zeta(5)/\zeta(5/2)$, by
\eqref{eq:zetalow}. Thus
\[
 \abs{\frac{\Gamma(s)x^{-s}}{\zeta(s)}}
 \leq\pi^2\sqrt2\,\frac{\zeta(5/2)}{\zeta(5)}
 \frac{(2\pi x)^{2N-\frac12}}{\Gamma(2N+\frac12)^2}
 \frac1{\sqrt{\cosh(\pi t)}}.
\]
With $v=\tanh(\pi t)$, a beta integral evaluates the remaining integral:
\[
 \int_{-\infty}^{\infty}\frac{dt}{\sqrt{\cosh(\pi t)}}
 =\frac1\pi B(\tfrac12,\tfrac14)
 =\frac{\Gamma(1/4)}{\sqrt\pi\,\Gamma(3/4)}.
\]
This proves \eqref{eq:leftremainder}. The bounds on the other three
edges in the contour proof are uniform when $x$ belongs to a
fixed compact subinterval of $(0,\infty)$: the factors $x^{-\sigma}$
are then bounded by the corresponding endpoint powers, and the
superexponential Gamma majorant controls the far horizontal edges.
The power series for the trivial residues converge uniformly on such
compact sets. Solving the finite contour identity for the zero sum
therefore proves the stated local uniform convergence.
\end{proof}

\begin{rem}
For fixed $N$, shifting first to the entire line
$\operatorname{Re}s=\frac12-2N$ and then letting the selected horizontal
height tend to infinity identifies its integral with the part of the
trivial-pole contribution beyond that line. Hence
\eqref{eq:leftremainder} is a bound for truncating the \emph{trivial}
residues. It is not a bound for truncating the series over nontrivial
zeros. The distinction is essential when the explicit formula is used
numerically.
\end{rem}

\section{\texorpdfstring{The special functions $\kappa,\beta,\lambda,\upsilon$}{The special functions kappa, beta, lambda, upsilon}}\label{sec:special}

\begin{lem}\label{lem:entire}
The four power series defining $\kappa(x)$, $\beta(x)$, $\lambda(x)$ and $\upsilon(x)$ have infinite radius of convergence; the four functions are entire.
\end{lem}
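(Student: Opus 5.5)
The plan is to bound the coefficients of each series by a constant times a power of $(2\pi)^{n}/[(2m)!]^{2}$ or similar, and then apply the ratio test or the Cauchy--Hadamard root test. The arithmetic factors are uniformly controlled for arguments $\geq 2$. From $1<\zeta(\sigma)\leq\zeta(2)$ for $\sigma\geq2$ we get $1/\zeta(2m+2)\leq1$ and $1/\zeta(2m+1)\leq1$ for $m\geq1$. For $\lambda$, we use $|\zeta'(\sigma)|=\sum_{n\geq2}(\log n)n^{-\sigma}\leq\sum_{n\geq2}(\log n)n^{-3}=:L<\infty$ for $\sigma\geq3$. Together with $\zeta(2m+1)\geq1$, this gives $|\zeta'(2m+1)|/\zeta(2m+1)^{2}\leq L$. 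So the coefficients of $\kappa,\beta,\lambda$ are bounded in absolute value by a constant times $(2\pi)^{k}/(k!)^{2}$, where $k$ is the corresponding power of $x$.

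For $\upsilon$, the extra factor is $\Gamma'(2m+1)/(2m)!=\psi(2m+1)$. Since $\psi(n+1)=H_{n}-\gamma_e\leq 1+\log n$, the coefficient equals $(-1)^{m}\psi(2m+1)(2\pi)^{2m}/([(2m)!]^{2}\zeta(2m+1))$. Its absolute value is at most $(1+\log(2m))(2\pi)^{2m}/[(2m)!]^{2}$. The logarithmic growth is harmless: $(1+\log k)^{1/k}\to1$.

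It remains to note that $\sum_{k}C(1+\log k)(2\pi|x|)^{k}/(k!)^{2}$ converges for every complex $x$. The ratio of consecutive terms is $2\pi|x|(1+\log(k+1))/((k+1)^{2}(1+\log k))\to0$. So each majorant series converges everywhere, and by comparison each of the four power series has infinite radius of convergence. A power series with infinite radius defines an entire function, which completes the argument.

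None of the steps is a real obstacle. The only point needing care is the bound for $\lambda$. There one must make sure the ratio $\zeta'/\zeta^{2}$ stays bounded as $m\to\infty$, rather than merely being finite for each $m$. The series bound above gives this, and in fact $\zeta'(2m+1)\to0$, so the bound holds a fortiori.
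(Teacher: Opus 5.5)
Your proof is correct and follows essentially the same route as the paper. Both arguments bound each coefficient uniformly, using $1/\zeta\le 1$ and $|\zeta'(2m+1)|\le\sum_{k\ge2}(\log k)k^{-3}$, and then conclude by a root or ratio test against $(2\pi)^n/(n!)^2$.

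The differences are cosmetic. For the $\upsilon$ factor you use $\Gamma'(2m+1)/(2m)!=\psi(2m+1)=H_{2m}-\gamma_e\le 1+\log(2m)$. This is slightly sharper than the paper's integral bound $|\Gamma'(s)|\le s^{-2}+\Gamma(s+1)$, which leaves a linear factor $2(2m+1)$. You also apply the ratio test to a majorant series, where the paper uses Cauchy--Hadamard together with $n!\ge(n/e)^n$.
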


\begin{proof}
Writing any of the four series as $\sum_n a_nx^n$, we claim that
\begin{equation}\label{eq:coeffbound}
 |a_n|\leq C\frac{(n+1)(2\pi)^n}{(n!)^2}.
\end{equation}
The reciprocal zeta factors are at most $1$, and
$|\zeta'(2m+1)|\leq\sum_{k\geq2}(\log k)/k^3$ for $m\geq1$.
For the derivative factor in $\upsilon$, differentiation under the Gamma integral gives, for real $s>1$,
\begin{multline}\label{eq:gammaprimebound}
 |\Gamma'(s)|\leq\int_0^\infty t^{s-1}|\log t|e^{-t}\,dt\\
 \leq\int_0^1t^{s-1}(-\log t)\,dt+
          \int_1^\infty t^se^{-t}\,dt
 \leq s^{-2}+\Gamma(s+1).
\end{multline}
Thus $|\Gamma'(2m+1)|/[(2m)!]^3\leq2(2m+1)/[(2m)!]^2$, which proves \eqref{eq:coeffbound}. Since $n!\geq(n/e)^n$,
\[
 |a_n|^{1/n}\leq\frac{2\pi e^2[C(n+1)]^{1/n}}{n^2}\longrightarrow0.
\]
The Cauchy--Hadamard formula proves the assertion.
\end{proof}

We now prove Theorem \ref{thm:bessel}. The Bessel function of the first kind of order zero is
\begin{equation}\label{eq:J0def}
J_{0}(z)=\sum_{n=0}^{\infty}\frac{(-1)^{n}}{(n!)^{2}}\Big(\frac{z}{2}\Big)^{2n}
\end{equation}
(see \cite[Chapter 5]{Lebedev} or \cite[Section 10.2]{DLMF}). Substituting the rotated argument $z=2e^{-i\pi/4}\sqrt{y}$, for which $(z/2)^{2}=e^{-i\pi/2}\,y=-iy$, we obtain the identity
\begin{equation}\label{eq:J0rot}
J_{0}\big(2e^{-i\pi/4}\sqrt{y}\big)=\sum_{n=0}^{\infty}\frac{(-1)^{n}(-i)^{n}}{(n!)^{2}}\,y^{n}
=\sum_{n=0}^{\infty}\frac{i^{n}}{(n!)^{2}}\,y^{n}\qquad(y\ge0),
\end{equation}
whose real and imaginary parts are precisely the two lacunary series that arise from the trivial zeros:
\begin{align}
\operatorname{Re}J_{0}\big(2e^{-i\pi/4}\sqrt{y}\big)&=\sum_{m=0}^{\infty}\frac{(-1)^{m}y^{2m}}{[(2m)!]^{2}},\label{eq:J0parts}\\
\operatorname{Im}J_{0}\big(2e^{-i\pi/4}\sqrt{y}\big)&=\sum_{m=0}^{\infty}\frac{(-1)^{m}y^{2m+1}}{[(2m+1)!]^{2}} .\label{eq:J0parts2}
\end{align}
(By the classical relation $\ber(z)+i\,\bei(z)=J_{0}\big(z\,e^{3\pi i/4}\big)$ and the evenness of $J_{0}$, the real and imaginary parts in \eqref{eq:J0parts} and \eqref{eq:J0parts2} are precisely the Kelvin functions $\ber(2\sqrt{y})$ and $\bei(2\sqrt{y})$; see \cite[Section 10.61]{DLMF}.)

\begin{proof}[Proof of Theorem \ref{thm:bessel}]
Expanding $1/\zeta(2m+2)=\sum_{k\ge1}\mu(k)k^{-2m-2}$ in the definition \eqref{eq:defkappabeta} of $\kappa$ and interchanging the order of summation,
\begin{align}
\kappa(x)&=\sum_{k=1}^{\infty}\frac{\mu(k)}{k}\sum_{m=0}^{\infty}\frac{(-1)^{m}}{[(2m+1)!]^{2}}\Big(\frac{2\pi x}{k}\Big)^{2m+1}\label{eq:kappaswap}\\
&=\sum_{k=1}^{\infty}\frac{\mu(k)}{k}\,\operatorname{Im}J_{0}\Big(2e^{-i\pi/4}\sqrt{\tfrac{2\pi x}{k}}\Big),\notag
\end{align}
by \eqref{eq:J0parts2} with $y=2\pi x/k$. The interchange is justified by Tonelli's theorem: putting $y_{k}=2\pi x/k$ and $g(y)=\sum_{m\geq0}y^{2m+1}/[(2m+1)!]^{2}$, one has $g(y)\leq y\,e^{y}$, so
\begin{equation}\label{eq:tonelli}
\sum_{k=1}^{\infty}\frac{1}{k}\,g(y_{k})
\leq\sum_{k\leq2\pi x}\frac{g(y_{k})}{k}+\sum_{k>2\pi x}\frac{y_{k}e^{y_{k}}}{k}
\leq C(x)+2\pi e\,x\sum_{k>2\pi x}\frac{1}{k^{2}}<\infty .
\end{equation}
For $\beta$, put $g_e(y)=\sum_{m\geq1}y^{2m}/[(2m)!]^2$. The bound
$g_e(y)\leq\cosh y-1\leq\tfrac12y^2e^y$ gives
\[
 \sum_{k\geq1}\frac{g_e(2\pi x/k)}k
 \leq C(x)+\frac{e(2\pi x)^2}{2}\sum_{k>2\pi x}k^{-3}<\infty.
\]
Expanding $1/\zeta(2m+1)$ and interchanging absolutely therefore yields

\begin{align}
\beta(x)&=\sum_{k=1}^{\infty}\frac{\mu(k)}{k}\sum_{m=1}^{\infty}\frac{(-1)^{m}}{[(2m)!]^{2}}\Big(\frac{2\pi x}{k}\Big)^{2m}\label{eq:betaswap}\\
&=\sum_{k=1}^{\infty}\frac{\mu(k)}{k}\left(\operatorname{Re}J_{0}\Big(2e^{-i\pi/4}\sqrt{\tfrac{2\pi x}{k}}\Big)-1\right),\notag
\end{align}
The subtraction of $1$ removes the constant term. Adding $i$ times \eqref{eq:kappaswap} to \eqref{eq:betaswap} gives \eqref{eq:besselcombined}; no summation of the unregularized series $\sum_k\mu(k)/k$ is needed.

For $\lambda$ we use the Dirichlet series of the derivative of $1/\zeta$: for $\operatorname{Re}s>1$,
\begin{equation}\label{eq:dirichletprime}
\frac{\zeta'(s)}{\zeta(s)^{2}}=-\Big(\frac{1}{\zeta}\Big)'(s)=\sum_{k=1}^{\infty}\frac{\mu(k)\log k}{k^{s}} ,
\end{equation}
obtained by differentiating \eqref{eq:dirichlet} term by term. Substituting \eqref{eq:dirichletprime} at $s=2m+1$ into the definition \eqref{eq:deflamups} of $\lambda$ and interchanging, with the absolute majorant now $\sum_{k}(\log k/k)\,g_{e}(y_{k})<\infty$, whose tail beyond $k=2\pi x$ is bounded by $\tfrac{e}{2}(2\pi x)^{2}\sum_{k}(\log k)\,k^{-3}$ by the same estimate, we obtain
\begin{equation}\label{eq:lambdaswap}
\lambda(x)=\sum_{k=1}^{\infty}\frac{\mu(k)\log k}{k}\left(\operatorname{Re}J_{0}\Big(2e^{-i\pi/4}\sqrt{\tfrac{2\pi x}{k}}\Big)-1\right),
\end{equation}
which proves \eqref{eq:bessellambda}. The subtraction again removes the large-$k$ constant term; the factor $\log k$ does not make it dispensable.
\end{proof}

\subsection{The fourth function and a unified M\"obius--Bessel formula}\label{subsec:unified}

We now derive a representation for $\upsilon$ and combine the four terms occurring in the explicit formula. Throughout, the functions have the definitions \eqref{eq:defkappabeta}--\eqref{eq:deflamups}, and $\gamma_e$ denotes Euler's constant.

\begin{prop}[An integral identity for $\upsilon$]\label{prop:upsilon}
For every $x\in\Complex$,
\begin{equation}\label{eq:upsilon-beta}
\upsilon(x)=-\gamma_e\beta(x)+\int_0^1\frac{\beta(x)-\beta(tx)}{1-t}\,dt.
\end{equation}
The integrand has the removable value $x\beta'(x)$ at $t=1$. For $x>0$ this gives the absolutely convergent M\"obius-weighted representation
\begin{multline}\label{eq:upsilon-mobius}
\upsilon(x)=\sum_{k=1}^{\infty}\frac{\mu(k)}{k}\operatorname{Re}\Bigg\{
-\gamma_e\left[J_0\left(2e^{-i\pi/4}\sqrt{\frac{2\pi x}{k}}\right)-1\right]\\
+\int_0^1\frac{J_0\left(2e^{-i\pi/4}\sqrt{\frac{2\pi x}{k}}\right)
-J_0\left(2e^{-i\pi/4}\sqrt{\frac{2\pi tx}{k}}\right)}{1-t}\,dt\Bigg\}.
\end{multline}
\end{prop}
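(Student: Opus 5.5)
Since $\beta$ and $\upsilon$ share the same lacunary structure, I will prove \eqref{eq:upsilon-beta} coefficient by coefficient. Write $\beta(x)=\sum_{m\ge1}b_m x^{2m}$ with $b_m=(-1)^m(2\pi)^{2m}/\{[(2m)!]^2\zeta(2m+1)\}$. By \eqref{eq:deflamups}, $\upsilon(x)=\sum_{m\ge1}b_m\,\frac{\Gamma'(2m+1)}{(2m)!}\,x^{2m}$, and $\Gamma'(2m+1)/(2m)!=\psi(2m+1)$.

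The classical identity $\psi(n+1)=-\gamma_e+H_n$ with $H_n=\int_0^1\frac{1-t^n}{1-t}\,dt$ then gives
\[
\upsilon(x)=-\gamma_e\beta(x)+\sum_{m\ge1}b_m x^{2m}\int_0^1\frac{1-t^{2m}}{1-t}\,dt .
\]
The remaining step is to interchange sum and integral, which yields $\int_0^1\frac{\beta(x)-\beta(tx)}{1-t}\,dt$. For this I will use the bound $0\le(1-t^{2m})/(1-t)\le 2m$ on $[0,1)$. With it, $\sum_m |b_m||x|^{2m}\,2m\le\sum_m 2m\,|b_m||x|^{2m}<\infty$ for every complex $x$, by Lemma~\ref{lem:entire} (the coefficient bound \eqref{eq:coeffbound}). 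Dominated convergence (Fubini) therefore justifies the swap for all $x\in\Complex$.

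For the removable value at $t=1$, $\beta$ is entire, so $(\beta(x)-\beta(tx))/(1-t)\to x\beta'(x)$ as $t\to1^-$, and the integrand extends continuously to $[0,1]$. This proves \eqref{eq:upsilon-beta}.

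For \eqref{eq:upsilon-mobius}, fix $x>0$ and substitute the representation \eqref{eq:betaswap} of $\beta$ into \eqref{eq:upsilon-beta}, both for $\beta(x)$ and for $\beta(tx)$. The constants $-1$ cancel in the difference $\beta(x)-\beta(tx)$, leaving $\operatorname{Re}[J_0(\cdot\,x)-J_0(\cdot\,tx)]$ inside the integral. The $-\gamma_e\beta$ term gives the first bracket in \eqref{eq:upsilon-mobius}, since \eqref{eq:betaswap} converges absolutely.

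The main obstacle is interchanging $\sum_k$ with $\int_0^1\,dt/(1-t)$ in the second term. I will handle it with an absolute majorant built from $g_e$. Put $y_k=2\pi x/k$. Termwise, $|\operatorname{Re}J_0(\ldots y_k)-\operatorname{Re}J_0(\ldots ty_k)|/(1-t)\le\sum_{m\ge1}\frac{y_k^{2m}}{[(2m)!]^2}\cdot 2m\le y_k g_e'(y_k)$. Here $y g_e'(y)\le y\sinh y\le y^2e^y$, and this is uniform in $t$. Hence
\[
\sum_k\frac1k\int_0^1(\cdots)\,dt\le C(x)+(2\pi x)^2e\sum_{k>2\pi x}k^{-3}<\infty,
\]
exactly as in the proof of Theorem~\ref{thm:bessel}. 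Tonelli and Fubini then give the interchange and the absolute convergence of \eqref{eq:upsilon-mobius}. Taking real parts commutes with everything because $\mu(k)/k$ is real.
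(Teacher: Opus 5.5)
Your proposal is correct and follows essentially the same route as the paper. Both arguments use $\psi(2m+1)=H_{2m}-\gamma_e$ with $H_{2m}=\int_0^1(1-t^{2m})/(1-t)\,dt$, an absolute sum--integral interchange, and then substitution of the M\"obius--Bessel series for $\beta$, justified by an absolute majorant in $k$. The only difference is cosmetic: you bound the difference quotient pointwise by $2m$, which gives the $t$-uniform majorant $y_k g_e'(y_k)$, whereas the paper uses the integrated majorant $H_{2m}\le 1+\log(2m)$.
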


\begin{proof}
Use $\Gamma'(2m+1)/(2m)!=\psi(2m+1)=H_{2m}-\gamma_e$ and
\begin{equation}\label{eq:harmonic-integral}
 H_{2m}=\int_0^1\frac{1-t^{2m}}{1-t}\,dt.
\end{equation}
Substitution in \eqref{eq:deflamups} gives
\[
 \upsilon(x)+\gamma_e\beta(x)
 =\int_0^1\sum_{m\geq1}
 \frac{(-1)^m(2\pi x)^{2m}(1-t^{2m})}
      {[(2m)!]^2\zeta(2m+1)(1-t)}\,dt,
\]
which is \eqref{eq:upsilon-beta}. The interchange is absolute and locally uniform: on $|x|\leq R$, the integrated majorant is
\[
 \sum_{m\geq1}\frac{(2\pi R)^{2m}H_{2m}}
 {[(2m)!]^2\zeta(2m+1)}<\infty,
\]
since $H_{2m}\leq1+\log(2m)$. Taylor expansion of $\beta(tx)$ gives the removable value at $t=1$.

To obtain \eqref{eq:upsilon-mobius}, substitute the M\"obius--Bessel series for $\beta$. Absolute interchange with the $t$-integral follows from
\[
 \sum_{k\geq1}\frac1k\sum_{m\geq1}
 \frac{(2\pi x/k)^{2m}(H_{2m}+\gamma_e)}{[(2m)!]^2}<\infty.
\]
For $k>2\pi x$ the inner sum is $O_x(k^{-2})$, so the outer tail is summable.
\end{proof}

The next identity combines the four functions into one M\"obius-weighted series. Here $K_0$ is the modified Bessel function of order zero on its principal branch. The four-function combination accounts for the negative-integer poles; the residue at $s=0$ remains the separate constant $-2$.

\begin{thm}[A unified M\"obius--Bessel formula]\label{thm:unified}
For every $x>0$,
\begin{multline}\label{eq:unified-k0}
\pi\kappa(x)+2\lambda(x)+4\upsilon(x)-2\beta(x)\log(2\pi x)\\
=\sum_{k=1}^{\infty}\frac{\mu(k)}k
\left[4\operatorname{Re}K_0\left(2e^{i\pi/4}\sqrt{\frac{2\pi x}{k}}\right)
+2\log\left(\frac{2\pi x}{k}\right)+4\gamma_e\right].
\end{multline}
The series converges absolutely and gives the second form \eqref{eq:explicit-unified} of Theorem~\ref{thm:main}.
\end{thm}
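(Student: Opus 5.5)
The plan is to expand the bracket on the right of \eqref{eq:unified-k0} termwise as a power series in $y=2\pi x/k$. We then show that, after multiplication by $\mu(k)/k$ and summation over $k$, its pieces reproduce the four Möbius--Bessel representations already established in Theorem~\ref{thm:bessel} and Proposition~\ref{prop:upsilon}.

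\emph{Step 1: ascending series for $K_0$.} We start from the classical expansion, valid on the principal branch for $|\arg z|<\pi$:
\[
K_0(z)=-\Big(\log\tfrac z2+\gamma_e\Big)I_0(z)+\sum_{n\ge1}\frac{H_n}{(n!)^2}\Big(\frac z2\Big)^{2n}.
\]
Put $z=2e^{i\pi/4}\sqrt y$ with $y>0$. Then $(z/2)^2=iy$ and $\log(z/2)=\tfrac12\log y+i\pi/4$. Moreover $I_0(z)=\sum_n i^ny^n/(n!)^2=J_0(2e^{-i\pi/4}\sqrt y)$ by \eqref{eq:J0rot}.

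Write $R(y)$ and $I(y)$ for the real and imaginary parts given in \eqref{eq:J0parts}--\eqref{eq:J0parts2}. Taking real parts, and using that $i^{2m}=(-1)^m$ while the odd $H_n$ terms are purely imaginary, we get
\[
4\operatorname{Re}K_0+2\log y+4\gamma_e
=-(2\log y+4\gamma_e)\big(R(y)-1\big)+\pi I(y)+4\sum_{m\ge1}\frac{(-1)^mH_{2m}y^{2m}}{[(2m)!]^2}.
\]
Here the constant terms cancel exactly, which is why the bracket is summed as a whole.

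\emph{Step 2: identify each piece.} Substitute $y=2\pi x/k$, multiply by $\mu(k)/k$ and sum over $k$.
\begin{itemize}
\item By \eqref{eq:kappaswap}, the term $\pi I$ gives $\pi\kappa(x)$.
\item Split $\log y=\log(2\pi x)-\log k$. The piece $-2\log(2\pi x)(R-1)$ gives $-2\beta(x)\log(2\pi x)$ by \eqref{eq:betaswap}.
\item The piece $+2\log k\,(R-1)$ gives $2\lambda(x)$ by \eqref{eq:lambdaswap}.
\item The remaining terms are $-4\gamma_e(R-1)+4\sum_m(-1)^mH_{2m}y^{2m}/[(2m)!]^2$. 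They combine coefficientwise into $4\sum_{m\ge1}(-1)^m\psi(2m+1)y^{2m}/[(2m)!]^2$, since $H_{2m}-\gamma_e=\psi(2m+1)=\Gamma'(2m+1)/(2m)!$. Expanding $1/\zeta(2m+1)=\sum_k\mu(k)k^{-2m-1}$ in \eqref{eq:deflamups} and interchanging shows this sum equals $4\upsilon(x)$; equivalently one can read it off from \eqref{eq:upsilon-mobius} via \eqref{eq:harmonic-integral}.
\end{itemize}

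\emph{Step 3: absolute convergence and interchanges.} This is the main point to check, because the individual pieces involve $\log k$ and must not be split into divergent series such as $\sum\mu(k)/k$. For $k\le 2\pi x$ there are finitely many terms. For $k>2\pi x$ we have $y\le1$, and each piece is bounded using the majorants $g$ and $g_e$ from the proof of Theorem~\ref{thm:bessel}:
\begin{itemize}
\item $|R-1|\le\tfrac12 y^2e^y$;
\item $|I|\le ye^y$;
\item $\sum_m H_{2m}y^{2m}/[(2m)!]^2=O(y^2)$.
\end{itemize}
So the bracket is $O\big(y+y^2(1+|\log y|)\big)=O_x(k^{-1})$, and the full series is dominated by $\sum_k k^{-2}$. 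The double series in $(k,m)$ for each separated piece is absolutely convergent by the same bounds; this is exactly the Tonelli argument of \eqref{eq:tonelli}, with the extra factor $\log k$ or $H_{2m}$ handled as in Lemma~\ref{lem:entire} and Proposition~\ref{prop:upsilon}. This justifies regrouping the sum over $k$ into the four pieces above.

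Inserting the resulting identity into \eqref{eq:mainformula} gives \eqref{eq:explicit-unified}. The only delicate point is the branch bookkeeping for $\log(z/2)$, which contributes the $\pi I$ term through its imaginary part $i\pi/4$; we will verify it against the odd-pole sum \eqref{eq:oddfinal}.
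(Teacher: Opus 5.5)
Your proposal is correct and follows essentially the same route as the paper. You expand $K_0(2e^{i\pi/4}\sqrt y)$ by its ascending series, take real parts to isolate $\pi I(y)$, the $\psi(2m+1)$ series and the $\log y\,(R(y)-1)$ series, split $\log y=\log(2\pi x)-\log k$ to identify $\pi\kappa$, $4\upsilon$, $-2\beta\log(2\pi x)$ and $2\lambda$, and obtain absolute convergence from an $O_x(k^{-2})$ bound on the weighted summands. The only difference is cosmetic: you use the $H_n$ form of the expansion and recombine $H_{2m}-\gamma_e=\psi(2m+1)$, whereas the paper writes the series with $\psi(j+1)$ directly and separates the $j=0$ term.
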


\begin{proof}
The coefficient calculation identifies each of the four contributions explicitly. Temporarily put $y=2\pi x/k$. The power-series expansion of $K_0$ (\cite[Section~10.31]{DLMF}) gives
\begin{multline}\label{eq:k0-series-original-j0}
K_0(2e^{i\pi/4}\sqrt y)
=-\left(\tfrac12\log y+\tfrac{i\pi}{4}\right)J_0(2e^{-i\pi/4}\sqrt y)\\
+\sum_{j=0}^{\infty}\frac{\psi(j+1)i^j y^j}{(j!)^2}.
\end{multline}
Indeed, the modified-Bessel power series contains $(iy)^j/(j!)^2$, which is exactly the rotated $J_0$ series \eqref{eq:J0rot}. Taking real parts, separating the $j=0$ term $\psi(1)=-\gamma_e$, and multiplying by $4$, we obtain
\begin{align}\label{eq:kernel-expanded}
&4\operatorname{Re}K_0(2e^{i\pi/4}\sqrt y)+2\log y+4\gamma_e\notag\\
&\quad=\pi\sum_{m=0}^{\infty}\frac{(-1)^m y^{2m+1}}{[(2m+1)!]^2}
+4\sum_{m=1}^{\infty}\frac{(-1)^m\psi(2m+1)y^{2m}}{[(2m)!]^2}\notag\\
&\qquad\quad-2\log y\sum_{m=1}^{\infty}\frac{(-1)^m y^{2m}}{[(2m)!]^2}.
\end{align}
Multiply this identity by $\mu(k)/k$ and sum over $k$. The first series gives $\pi\kappa(x)$; the second gives $4\upsilon(x)$. In the last series,
\[
\log y=\log(2\pi x)-\log k,
\]
so its two contributions are $-2\beta(x)\log(2\pi x)$ and $2\lambda(x)$, respectively. These are precisely the four terms on the left of \eqref{eq:unified-k0}, with the indicated signs and coefficients.

For absolute convergence, \eqref{eq:kernel-expanded} yields, as $y\to0^{+}$,
\begin{multline}\label{eq:kernel-small}
4\operatorname{Re}K_0(2e^{i\pi/4}\sqrt y)+2\log y+4\gamma_e\\
=\pi y+y^2\bigl(\tfrac12\log y-\psi(3)\bigr)+O(y^3+y^4|\log y|).
\end{multline}
The $k$th weighted summand is therefore $O_x(k^{-2})$, and the logarithmic part is $O_x((1+\log k)k^{-3})$. The previous absolute-convergence arguments also justify the separated coefficient computations. The three terms \emph{inside the square brackets} in \eqref{eq:unified-k0} must be kept together when the $k$-sum is taken: it is their cancellation at $y=0$, not separate convergence of unregularized logarithmic sums, that proves the result.
\end{proof}

\begin{prop}[A positive integral and a truncation estimate]\label{prop:positive-kernel}
For $y>0$ the bracket in \eqref{eq:unified-k0} has the representation
\begin{multline}\label{eq:positive-kernel}
4\operatorname{Re}K_0(2e^{i\pi/4}\sqrt y)+2\log y+4\gamma_e\\
=4\int_0^\infty(1-e^{-y/v})\frac{\sin^2(v/2)}v\,dv,
\end{multline}
and it lies strictly between $0$ and $\pi y$. For every integer $K\ge1$,
\begin{multline}\label{eq:kernel-tail}
\left|\sum_{k>K}\frac{\mu(k)}k
\left[4\operatorname{Re}K_0\left(2e^{i\pi/4}\sqrt{\frac{2\pi x}{k}}\right)
+2\log\left(\frac{2\pi x}{k}\right)+4\gamma_e\right]\right|\\
\le\frac{2\pi^2x}{K}.
\end{multline}
In particular,
\begin{equation}\label{eq:correction-global}
\left|\pi\kappa(x)+2\lambda(x)+4\upsilon(x)-2\beta(x)\log(2\pi x)\right|\le30x.
\end{equation}
\end{prop}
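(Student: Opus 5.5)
The plan is to establish the integral representation \eqref{eq:positive-kernel} first; the two-sided bound, the tail estimate \eqref{eq:kernel-tail} and the global bound \eqref{eq:correction-global} then follow by elementary majorization. Write $h(y)$ for the bracket. I would prove $h(y)=4\int_0^\infty(1-e^{-y/v})\sin^2(v/2)v^{-1}\,dv$ by comparing Mellin transforms in $y$ on a common strip. For $-2<\operatorname{Re}w<-1$, $\int_0^\infty y^{w-1}(1-e^{-y/v})\,dy=-\Gamma(w)v^{w}$. Interchanging by absolute convergence, the right side therefore has transform $-4\Gamma(w)\int_0^\infty v^{w-1}\sin^2(v/2)\,dv$, and the last integral is a classical gamma--cosine integral, explicit in $\Gamma(w)\cos(\pi w/2)$.

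For the left side I would start from $K_0(2\sqrt z)=\tfrac12\int_0^\infty e^{-u-z/u}\,u^{-1}du$ with $z=iy$, extended to the boundary ray by Abel continuity. The Mellin transform of $K_0(2e^{i\pi/4}\sqrt y)$ is $\tfrac12 e^{-i\pi w/2}\Gamma(w)^2$ on $\operatorname{Re}w>0$. Its continuation to $-2<\operatorname{Re}w<-1$ is precisely the transform of $K_0$ with the first Taylor terms subtracted. By \eqref{eq:kernel-expanded} and \eqref{eq:kernel-small}, subtracting $-2\log y-4\gamma_e$ removes the double pole at $w=0$ and leaves the $O(y)$ behaviour at $0$, while $h(y)$ grows only like $2\log y$ at infinity because $K_0$ decays there. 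So the real-part transform of $4K_0$ continued to the strip should equal $2\Gamma(w)^2\cos(\pi w/2)$ there. Checking that this coincides with $-4\Gamma(w)\int v^{w-1}\sin^2(v/2)\,dv$, via the reflection formula, is the main obstacle. The difficulty is bookkeeping: the strip shift must be justified carefully, taking real parts before continuation and confirming that no extra polynomial or logarithmic term is left behind. If this proves awkward, a fallback is to check that both sides vanish at $y=0^{+}$ and have equal $y$-derivatives. The right-side derivative is $4\int_0^\infty e^{-y/v}\sin^2(v/2)v^{-2}\,dv$, and the left-side derivative is computed by expanding termwise, using $\sin^2(v/2)=\tfrac12(1-\cos v)$ and Mellin--Barnes.

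Once \eqref{eq:positive-kernel} is available the remaining claims follow directly. The integrand is positive, so $h(y)>0$. Since $1-e^{-y/v}<y/v$ and $\int_0^\infty\sin^2(v/2)v^{-2}\,dv=\pi/4$, we get $h(y)<4y\cdot\pi/4=\pi y$. For the tail \eqref{eq:kernel-tail}, put $y=2\pi x/k$. Then
\[
\Bigl|\sum_{k>K}\tfrac{\mu(k)}k h(2\pi x/k)\Bigr|\le 2\pi^2x\sum_{k>K}k^{-2}\le 2\pi^2x/K .
\]

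For \eqref{eq:correction-global}, the crude bound $2\pi^2\zeta(2)x\approx32.5x$ is too weak, so I would exploit signs. Every summand $h(2\pi x/k)/k$ lies in $(0,2\pi^2x/k^2)$. Hence the full sum is bounded in absolute value by the larger of the two sums over $\mu(k)=+1$ and over $\mu(k)=-1$. The first is at most $2\pi^2x\bigl(1+\sum_{k\ge6,\ \mu(k)=1}k^{-2}\bigr)$. The second is at most $2\pi^2x\bigl(\tfrac14+\tfrac19+\tfrac1{25}+\sum_{k\ge7,\ \mu(k)=-1}k^{-2}\bigr)$. Bounding each remaining sum by $\zeta(2)-\sum_{k\le5}k^{-2}<0.18$ gives a factor below $1.2$, hence the bound $2\pi^2\cdot1.2\,x<24x\le30x$. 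Theorem~\ref{thm:unified} identifies the full sum with the four-function combination, which completes the argument.
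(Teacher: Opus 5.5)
Your Mellin-transform route to \eqref{eq:positive-kernel} is genuinely different from the paper's and can be completed, but as written it uses the wrong strip. For $\operatorname{Re}w\le-1$ the integral $\int_0^\infty y^{w-1}(1-e^{-y/v})\,dy$ diverges at $y=0$, since the integrand behaves like $y^{w}/v$. So does $\int_0^\infty y^{w-1}h(y)\,dy$, because $h(y)\sim\pi y$ by \eqref{eq:kernel-small}. Continuing $2\Gamma(w)^2\cos(\pi w/2)$ into $-2<\operatorname{Re}w<-1$ crosses the simple pole at $w=-1$ (residue $\pi$), and the result is the transform of $h(y)-\pi y$, not of $h$.

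The correct common strip is $-1<\operatorname{Re}w<0$. This is exactly what your own description dictates: $O(y)$ at $0$ and growth like $2\log y$ at $\infty$. On that strip the argument closes quickly:
\begin{itemize}
\item $\int_0^\infty y^{w-1}(1-e^{-y/v})\,dy=-\Gamma(w)v^{w}$, and Tonelli applies with the finite majorant $|\Gamma(\sigma)|\int_0^\infty v^{\sigma-1}\sin^2(v/2)\,dv$, where $\sigma=\operatorname{Re}w$.
\item $\int_0^\infty v^{w-1}\sin^2(v/2)\,dv=-\tfrac12\Gamma(w)\cos(\pi w/2)$ for $-2<\operatorname{Re}w<0$. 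So the right side has transform $2\Gamma(w)^2\cos(\pi w/2)$ identically. No reflection formula is needed, and your ``main obstacle'' disappears.
\item On the left, compute $\tfrac12a^{-w}\Gamma(w)^2$ for $\operatorname{Re}a>0$, where the interchange is absolute, and let $a\to i$ by dominated convergence. Do not interchange directly at $a=i$: there $\int_0^\infty e^{-u-iy/u}u^{-1}\,du$ converges only conditionally at $u=0$.
\item The principal part $2w^{-2}-4\gamma_e w^{-1}$ at $w=0$ is the transform over $(0,1)$ of $-2\log y-4\gamma_e$. Hence the continuation to $-1<\operatorname{Re}w<0$ is the transform of $h$, and Mellin uniqueness plus continuity gives the identity.
\end{itemize}

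The paper instead proves \eqref{eq:positive-kernel} pointwise. It inserts $e^{-\varepsilon v}$ and evaluates the pieces with $\int_0^\infty e^{-av-y/v}v^{-1}\,dv=2K_0(2\sqrt{ay})$ at $a=\varepsilon$ and $a=\varepsilon-i$, together with Frullani's integral. It then lets $\varepsilon\to0^{+}$ by monotone convergence, and the two $\log\varepsilon$ divergences cancel. That avoids continuation and uniqueness theorems. Your route costs those two steps, but it explains structurally why exactly $2\log y+4\gamma_e$ is subtracted (the principal part at $w=0$) and why $h(y)\sim\pi y$ (the residue at $w=-1$). Positivity, $h<\pi y$ and \eqref{eq:kernel-tail} are argued exactly as in the paper.

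For \eqref{eq:correction-global}, the crude bound you rejected is not $2\pi^2\zeta(2)x$. Since $\mu(k)=0$ off squarefree $k$, it is $2\pi^2x\sum_k|\mu(k)|k^{-2}=2\pi^2x\,\zeta(2)/\zeta(4)=30x$, which is the paper's one-line proof. Your bound by the larger of the $\mu=1$ and $\mu=-1$ sums is also valid, and sharper. One numerical slip: $\zeta(2)-\sum_{k\le5}k^{-2}\approx0.1813$, not below $0.18$, though the factor $1.1813<1.2$ still holds. In fact $\sum_{\mu(k)=1}k^{-2}=\tfrac12\bigl(\zeta(2)/\zeta(4)+1/\zeta(2)\bigr)=21/(2\pi^2)$, so your splitting gives exactly $21x$ without any numerics.
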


\begin{proof}
The regularization in \eqref{eq:positive-kernel} can be verified without splitting divergent integrals. For $\varepsilon>0$, use the standard integral
\[
\int_0^\infty e^{-av-y/v}\frac{dv}{v}=2K_0(2\sqrt{ay})
\qquad(\operatorname{Re}a>0,\ y>0)
\]
from \cite[Section~10.32]{DLMF}, together with Frullani's integral, to obtain
\begin{align*}
&2\int_0^\infty e^{-\varepsilon v}(1-e^{-y/v})\frac{1-\cos v}{v}\,dv\\
&\quad=\log(1+\varepsilon^{-2})-4K_0(2\sqrt{\varepsilon y})
+4\operatorname{Re}K_0(2\sqrt{(\varepsilon-i)y}).
\end{align*}
Each difference used here is integrable. As $\varepsilon\to0^{+}$, the left side increases to the integral in \eqref{eq:positive-kernel}. On the right,
\[
K_0(2\sqrt{\varepsilon y})=-\tfrac12\log(\varepsilon y)-\gamma_e+o(1),
\]
and complex conjugation identifies the real part at $\sqrt{-iy}$ with the real part at $\sqrt{iy}$. The two divergent $\log\varepsilon$ terms cancel, leaving exactly \eqref{eq:positive-kernel}.

Positivity is immediate. The inequality $0<1-e^{-y/v}<y/v$ gives
\[
0<4\int_0^\infty(1-e^{-y/v})\frac{\sin^2(v/2)}v\,dv
<4y\int_0^\infty\frac{\sin^2(v/2)}{v^2}\,dv=\pi y.
\]
For completeness, integration by parts reduces the last integral to
$\tfrac12\int_0^\infty\sin v\,dv/v=\pi/4$.
After putting $y=2\pi x/k$, comparison with $\sum_{k>K}k^{-2}\le1/K$ proves \eqref{eq:kernel-tail}. Finally,
\[
\sum_{k=1}^{\infty}\frac{|\mu(k)|}{k^2}=\prod_p(1+p^{-2})=\frac{\zeta(2)}{\zeta(4)}=\frac{15}{\pi^2},
\]
so the same estimate for the full series gives $2\pi^2x\cdot15/\pi^2=30x$. Positivity belongs to the individual bracket, not to the M\"obius-weighted sum.
\end{proof}

\begin{rem}[Relation to the general M\"obius summation formula]\label{rem:chorge-dixit}
Taking the Mellin data $\Gamma(s)x^{-s}$, inverse transform $e^{-xt}$, and residue $k=1$ in \cite[Theorem~2.9 of the arXiv version]{ChorgeDixit} gives
\[
\Phi(e^{-x})=\sum_\rho\frac{\Gamma(\rho)x^{-\rho}}{\zeta'(\rho)}-2
-4\sum_{k\ge1}\frac{\mu(k)}k\int_0^\infty(e^{-xt}-1)\frac{\sin^2(\pi/(kt))}{t}\,dt.
\]
The substitution $v=2\pi/(kt)$ identifies its integral with \eqref{eq:positive-kernel}, at $y=2\pi x/k$. Thus the underlying real simple-zero summation identity is contained in that general theorem. The computations above give its explicit evaluation in the four functions of this paper, complete the representation of $\upsilon$, and exhibit the cancellations leading to one regularized $K_0$ series.
\end{rem}

\section{The sum over the zeros}\label{sec:zerosum}

\begin{thm}\label{thm:zerosum}
Assume Hypothesis (S). For every $0<x<\infty$ the symmetric limit
\begin{equation}\label{eq:zerosumlim}
\sum_{\rho}\frac{\Gamma(\rho)\,x^{-\rho}}{\zeta'(\rho)}
=\lim_{\nu\to\infty}\sum_{\abs{\gamma}<T_{\nu}}\frac{\Gamma(\rho)\,x^{-\rho}}{\zeta'(\rho)}
\end{equation}
exists and is a real number.
\end{thm}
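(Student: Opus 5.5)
The existence of the limit has essentially been established in the completion of the contour argument in Section~\ref{sec:integral}. I would make that explicit and then handle reality through a conjugation symmetry. Under (S), the residue theorem on the rectangle $R_\nu$ gives \eqref{eq:resthm} with $N=\nu$. The point is to solve this finite identity for the zero sum:
\[
\sum_{|\gamma|<T_\nu}\frac{\Gamma(\rho)x^{-\rho}}{\zeta'(\rho)}
=\frac{1}{2\pi i}\oint_{R_\nu}F(s)\,ds+2-\sum_{m=0}^{\nu-1}\Res_{-(2m+1)}F-\sum_{m=1}^{\nu-1}\Res_{-2m}F .
\]

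\emph{Step 1 (existence).} As $\nu\to\infty$, the contour integral tends to $\Phi(e^{-x})$. This uses \eqref{eq:rightedge} for $\mathcal C_1$, \eqref{eq:leftedge} for $\mathcal C_3$, and \eqref{eq:horiznear} together with \eqref{eq:horizfar2} for $\mathcal C_2,\mathcal C_4$. The two partial residue sums converge to $\pi\kappa(x)$ and $2\lambda(x)+4\upsilon(x)-2\beta(x)\log(2\pi x)$, by \eqref{eq:oddfinal}, \eqref{eq:evenfinal} and the absolute convergence in Lemma~\ref{lem:entire}. Hence the right side has a limit, and so does the left. The only care needed is that the rectangle heights in \eqref{eq:resthm} are exactly the $T_\nu$ of Lemma~\ref{lem:ordinates}. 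This matches the convention \eqref{eq:symconv}, so no zeros are gained or lost between consecutive cutoffs beyond what the residue theorem accounts for.

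\emph{Step 2 (reality).} Since $\zeta(\bar s)=\overline{\zeta(s)}$, the nontrivial zeros come in conjugate pairs $\rho,\bar\rho$, both simple. Also $\zeta'(\bar\rho)=\overline{\zeta'(\rho)}$ and $\Gamma(\bar\rho)=\overline{\Gamma(\rho)}$, and for real $x>0$ we have $x^{-\bar\rho}=\overline{x^{-\rho}}$. The cutoff $|\gamma|<T_\nu$ is symmetric under conjugation, and no zero is real: the real zeros of $\zeta$ are only the trivial ones, and $\zeta$ has no zeros on $(0,1)$ since $(1-2^{1-s})\zeta(s)=\eta(s)>0$ there. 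So each finite sum pairs into terms $2\operatorname{Re}\bigl(\Gamma(\rho)x^{-\rho}/\zeta'(\rho)\bigr)$ and is real. A limit of real numbers is real.

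Alternatively, reality follows directly from Step 1, since every other quantity in the identity is real. $\Phi(e^{-x})$ is real, and $\kappa,\beta,\lambda,\upsilon$ have real coefficients with $\log(2\pi x)$ real. I would record both arguments, the second as a check.

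I do not expect a genuine obstacle. The only delicate point is already handled by Lemma~\ref{lem:ordinates}: ensuring that the horizontal edge estimates are uniform along the selected sequence $T_\nu$ and not along arbitrary $T$. Without that choice the limit is asserted only along $T_\nu$, which is exactly what \eqref{eq:zerosumlim} claims.
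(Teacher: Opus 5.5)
Your proposal is correct and follows the paper's proof: existence comes from solving the finite residue identity \eqref{eq:resthm} on the rectangles with the selected heights $T_\nu$, and reality comes from pairing $\rho$ with $\bar\rho$ inside each symmetric cutoff. Your alternative reality check, that every other term in that identity is real for $x>0$, is also valid; the remark excluding real zeros is unnecessary, since any conjugation-invariant finite sum is real.
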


\begin{proof}
Convergence follows from the contour argument. Conjugation preserves the zeros and their derivatives, and $x>0$ gives
\begin{equation}\label{eq:pairreal}
\frac{\Gamma(\bar\rho)\,x^{-\bar\rho}}{\zeta'(\bar\rho)}=\overline{\left(\frac{\Gamma(\rho)\,x^{-\rho}}{\zeta'(\rho)}\right)} ,
\end{equation}
Each finite symmetric sum therefore equals twice the real part of the sum over its positive ordinates. Its limit is real, with no rearrangement of an infinite series.
\end{proof}

For an absolutely convergent version we introduce the following hypothesis.

\medskip
\noindent\textbf{Hypothesis (H).} $\displaystyle\sum_{\rho}\frac{\abs{\Gamma(\rho)}}{\abs{\zeta'(\rho)}}<\infty .$
\medskip

\noindent Since, by \eqref{eq:stirling} with the constant uniform for $\beta_{\rho}\in[0,1]$,
\[\abs{\Gamma(\rho)}\;\ll\;e^{-\pi\abs{\gamma}/2}\,\big(1+\abs{\gamma}\big)^{\beta_{\rho}-\frac12} ,\]
and the number of zeros with $\abs{\gamma}\leq T$ is $O(T\log T)$ (\cite[Chapter IX]{Titchmarsh}), Hypothesis (H) holds as soon as, for a single fixed $\theta<\pi/2$,
\[\frac{1}{\abs{\zeta'(\rho)}}\;\ll\;e^{\theta\abs{\gamma}}\qquad\text{over the nontrivial zeros}.\] A polynomial upper bound for a global negative moment is sufficient as well. Under (S), if
\[
 J_{-1}(T):=\sum_{0<\gamma\leq T}\frac1{\abs{\zeta'(\rho)}^2}
       \ll (1+T)^B
\]
for some fixed $B\geq0$, positivity of the summands implies
\[
 \frac1{\abs{\zeta'(\rho)}^2}
 \leq J_{-1}(\abs\gamma+1)\ll(1+\abs\gamma)^B.
\]
Conjugation gives the same bound for negative ordinates, and Gamma decay proves (H). Thus global upper bounds for the nonnegative sums defining negative moments studied in \cite{Gonek1989,Hejhal1989} imply suitable pointwise control. Simplicity alone supplies no such uniform estimate.

The functional equation pairs $\rho$ with $1-\rho$, one in each half of the critical strip. All following pairings take place within each finite symmetric cutoff, so no rearrangement is involved:

\begin{equation}\label{eq:pairsplit}
\sum_{\rho}\frac{\Gamma(\rho)\,x^{-\rho}}{\zeta'(\rho)}
=\lim_{\nu\to\infty}\ \sum_{0<\operatorname{Im}\rho<T_{\nu}}\left(\frac{\Gamma(\rho)\,x^{-\rho}}{\zeta'(\rho)}+\frac{\Gamma(1-\rho)\,x^{-(1-\rho)}}{\zeta'(1-\rho)}\right).
\end{equation}
Differentiating \eqref{eq:funceq} at $s=\rho$ eliminates the term containing $\zeta(1-\rho)$ and gives
\begin{equation}\label{eq:zprho2}
\zeta'(\rho)=-\,2^{\rho}\pi^{\rho-1}\sin(\tfrac12\pi\rho)\,\Gamma(1-\rho)\,\zeta'(1-\rho) ,
\end{equation}
and hence
\begin{equation}\label{eq:zprho3}
\zeta'(1-\rho)=-\,\frac{\zeta'(\rho)\,2^{-\rho}\,\pi^{1-\rho}}{\Gamma(1-\rho)\,\sin(\tfrac12\pi\rho)} .
\end{equation}
Substituting \eqref{eq:zprho3}, and keeping careful track of the minus sign,
\begin{align}
\frac{\Gamma(1-\rho)\,x^{-(1-\rho)}}{\zeta'(1-\rho)}
&=-\,\frac{\Gamma(1-\rho)^{2}\,\sin(\tfrac12\pi\rho)\,2^{\rho}\pi^{\rho-1}}{\zeta'(\rho)}\;x^{\rho-1}\label{eq:pairterm}\\
&=-\,\pi\,\frac{(2\pi)^{\rho}\,\sin(\tfrac12\pi\rho)}{\zeta'(\rho)\,\Gamma(\rho)^{2}\,\sin^{2}(\pi\rho)}\;x^{\rho-1},\notag
\end{align}
where in the last step we used $\Gamma(1-\rho)^{2}=\pi^{2}/\big(\Gamma(\rho)^{2}\sin^{2}(\pi\rho)\big)$ from \eqref{eq:gammarefl} and $2\pi^{2}\,2^{\rho-1}\pi^{\rho-1}=\pi(2\pi)^{\rho}$. Therefore
\begin{multline}\label{eq:pairident1}
\sum_{\rho}\frac{\Gamma(\rho)\,x^{-\rho}}{\zeta'(\rho)}\\
=\lim_{\nu\to\infty}\ \sum_{0<\operatorname{Im}\rho<T_{\nu}}\left[\frac{\Gamma(\rho)}{\zeta'(\rho)}\,x^{-\rho}
-\,\frac{2\,(2\pi)^{\rho-1}\,\Gamma(1-\rho)^{2}\,\sin(\tfrac12\pi\rho)}{\zeta'(\rho)}\;x^{\rho-1}\right].
\end{multline}
\subsection{A uniform condition allowing cancellation}
Hypothesis (H) bounds the zero sum by the sum of the absolute values
of its coefficients. To retain cancellation, consider instead finite
Fej\'er-weighted sums.

\medskip
\noindent\textbf{Hypothesis $(H_{\mathrm F})$.} Under RH and (S), there
is a constant $C$ such that, simultaneously for every $T>0$ and $x>0$,
\begin{equation}\label{eq:HF}
 \abs{\sum_{\abs{\gamma}<T}
    \left(1-\frac{\abs{\gamma}}T\right)
    \frac{\Gamma(\rho)}{\zeta'(\rho)}x^{-i\gamma}}
 \leq C.
\end{equation}
Every sum in \eqref{eq:HF} is finite. In particular, the condition makes
sense without (H) and without a choice of the heights $T_\nu$.

\begin{thm}[A cancellation-preserving endpoint characterization]\label{thm:fejer}
Assume RH and (S). Then \eqref{eq:rhbound} is equivalent to
$(H_{\mathrm F})$. Hypothesis (H) implies $(H_{\mathrm F})$. Moreover,
if the endpoint bound holds, then
\begin{equation}\label{eq:fejernorm}
 \sup_{T>0,\,x>0}\abs{\sum_{\abs{\gamma}<T}
       \left(1-\frac{\abs{\gamma}}T\right)
       \frac{\Gamma(\rho)}{\zeta'(\rho)}x^{-i\gamma}}
 =\limsup_{x\to0^{+}}\sqrt{x}\abs{\Phi(e^{-x})}.
\end{equation}
No independence hypothesis on the zero ordinates is needed.
\end{thm}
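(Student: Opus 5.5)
The plan is to pass to the logarithmic variable $x=e^{-u}$ and read both conditions as statements about one function of $u$. Under RH and (S), write $c_\gamma=\Gamma(\rho)/\zeta'(\rho)$, so that $x^{-\rho}=x^{-1/2}x^{-i\gamma}$. Theorem~\ref{thm:main} together with \eqref{eq:correction-global} then gives
\[
\sqrt{x}\,\Phi(e^{-x})=g(u)+E(x),\qquad g(u):=\sum_\rho c_\gamma e^{i\gamma u},\qquad |E(x)|\le\sqrt{x}\,(2+30x).
\]
Here the zero sum is taken in the convention \eqref{eq:symconv}. In particular $E(x)\to0$ as $x\to0^{+}$, so \eqref{eq:rhbound} is equivalent to $\limsup_{u\to+\infty}|g(u)|<\infty$, and the two limsups $\limsup_{x\to0^+}\sqrt{x}|\Phi(e^{-x})|$ and $\limsup_{u\to\infty}|g(u)|$ coincide.

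Write the Fej\'er sum in \eqref{eq:HF} as $\sigma_T(u)=\sum_{|\gamma|<T}(1-|\gamma|/T)c_\gamma e^{i\gamma u}$. The implication (H)$\Rightarrow(H_{\mathrm F})$ is immediate, since $|\sigma_T(u)|\le\sum_\rho|c_\gamma|$. For the other directions, the first step is to use that each $\sigma_T$ is a finite trigonometric polynomial, hence almost periodic. By Dirichlet's simultaneous approximation, for any $\varepsilon>0$ and any $u_0$ there are arbitrarily large $\tau$ with $|\sigma_T(u_0+\tau)-\sigma_T(u_0)|<\varepsilon$. Hence $\sup_{u}|\sigma_T(u)|=\limsup_{u\to+\infty}|\sigma_T(u)|$. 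This reduces everything to the regime $x\to0^{+}$, where $E$ is negligible and no information about the behaviour of $\Phi$ for large $x$ is needed. This is also why no independence hypothesis on the ordinates is required.

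The second step is to identify $\sigma_T$ as a Fej\'er mean of $g$: formally $\sigma_T=g*K_T$ with $K_T(v)=\frac{T}{2\pi}\bigl(\frac{\sin(Tv/2)}{Tv/2}\bigr)^2$, whose Fourier transform is $(1-|\xi|/T)_+$. To justify this without (H), I would interpret $g$ as a tempered distribution whose Fourier transform is the discrete measure $\sum c_\gamma\delta_\gamma$. The weights $c_\gamma$ have at most polynomial growth after the Gamma decay, because the trivial-pole part and the contour identity \eqref{eq:resthm} control the partial sums at the heights $T_\nu$. Pairing this measure with the compactly supported continuous triangle then gives exactly $\sigma_T$.

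For \eqref{eq:rhbound}$\Rightarrow(H_{\mathrm F})$ at large $u$, I would split the convolution integral at $|v|\le u/2$. On that range $g(u-v)$ is bounded by $\limsup|g|+o(1)$, and $\int K_T=1$ gives $|\sigma_T(u)|\le\limsup|g|+o(1)$. The converse uses that $K_T$ is an approximate identity: $\sigma_T\to g$ pointwise wherever the symmetric limit exists. Hence $\limsup_{u\to\infty}|g(u)|\le\sup_{T,u}|\sigma_T(u)|$. The two inequalities together give the equality \eqref{eq:fejernorm}.

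The main obstacle is the tail $|v|>u/2$ of the convolution, where $u-v<0$, that is, $x$ is large. There $g(u-v)=\sqrt{x}\bigl(\Phi(e^{-x})-E(x)/\sqrt{x}\bigr)$ grows like $x^{3/2}$, i.e.\ exponentially in $|v|$. The kernel $K_T$ decays only like $v^{-2}$, so the convolution integral diverges. I would first try to replace $g$ on $u<0$ by a modified function that agrees with $g$ for $u\ge0$ and has the same Fourier measure on $(-T,T)$. The candidate is $\sqrt{x}\,\Phi(e^{-x})$ itself, which is bounded on all of $\mathbb R$ under \eqref{eq:rhbound}. One then shows that the discrepancy $\sqrt{x}E(x)$ has a Mellin transform that is analytic across $\operatorname{Re}s=\tfrac12$, so it contributes to $\sigma_T(u)$ only a term tending to $0$ as $u\to\infty$. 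This can be done by a contour shift of the triangle-weighted Mellin integral, using \eqref{eq:leftremainder} and the explicit trivial residues. The same almost-periodicity argument then transfers the resulting bound from large $u$ to all $u$.
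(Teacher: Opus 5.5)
\textbf{Main gap: the direction $(H_{\mathrm F})\Rightarrow$\eqref{eq:rhbound}.} The same gap affects the inequality $\limsup_{x\to0^+}\sqrt{x}\,|\Phi(e^{-x})|\le\sup_{T,u}|\sigma_T(u)|$ that you need for \eqref{eq:fejernorm}.

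You derive both from ``$\sigma_T\to g$ pointwise, since $K_T$ is an approximate identity''. That presupposes $\sigma_T=g*K_T$, which is exactly the identity you later concede cannot be written down: $g$ is not known to be bounded, or even tempered, as $u\to-\infty$. Your repair, replacing $g$ by $\sqrt{x}\,\Phi(e^{-x})$, uses the endpoint bound, which is not available in this direction.

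Nor does convergence of the partial sums along the heights $T_\nu$ give convergence of the Fej\'er means. Those means average the partial sums over all cutoffs $t\in[0,T]$, including the uncontrolled partial blocks between consecutive heights.

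Your claim that the $c_\gamma$ grow at most polynomially because the contour identity controls the partial sums at the $T_\nu$ is also unsupported. Small block sums do not bound individual coefficients without lower bounds for zero spacing or for $|\zeta'(\rho)|$, and RH with (S) supplies none.

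The missing idea is to extract bounded coefficients from each hypothesis separately and then argue weakly, not pointwise.
\begin{itemize}
\item Under $(H_{\mathrm F})$, the mean value $\lim_{U\to\infty}\frac1{2U}\int_{-U}^{U}\sigma_T(u)e^{-i\gamma_0u}\,du=(1-|\gamma_0|/T)c_{\gamma_0}$ gives $|c_\gamma|\le C$.
\item Under \eqref{eq:rhbound}, the Abelian limit $c_\gamma=\lim_{\eta\to0^+}\eta\,G(\rho+\eta)$ gives $|c_\gamma|\le L$.
\end{itemize}
Then, for $\varphi\in C_c^\infty(\mathbb R)$, local uniform convergence at the heights $T_\nu$ gives the absolutely convergent identity $\int g\varphi=\sum_\rho c_\gamma\widehat\varphi(-\gamma)$. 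Hence $\int\sigma_T\varphi\to\int g\varphi$, so $|\int g\varphi|\le\sup_{T,u}|\sigma_T|\int|\varphi|$, and continuity of $g$ gives the bound at every point. Only compactly supported test functions occur, so the size of $g$ for $u<0$ never matters.

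\textbf{The direction \eqref{eq:rhbound}$\Rightarrow(H_{\mathrm F})$.} Here your route genuinely differs from the paper's. The paper first proves $|g|\le L$ on all of $\mathbb R$, using bounded coefficients, the test-function identity, and simultaneous return times $e^{i\gamma\tau_j}\to1$ for all ordinates, and only then convolves $g$ itself with $K_T$. Your reduction to large $u$, via almost periodicity of the finite sum $\sigma_T$ and the bounded function $\tilde g(u)=\sqrt{x}\,\Phi(e^{-x})$, is sound and avoids that global bound.

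However, your justification of $\sigma_T(u)-(\tilde g*K_T)(u)\to0$ fails as stated.
\begin{itemize}
\item The discrepancy has no a priori strip of convergence for its Mellin transform: it is $\sim-2\sqrt{x}$ at $0$ and only $O(x^{3/2})$ at $\infty$.
\item The triangular weight is not analytic in $s$, so there is no contour to shift.
\item \eqref{eq:leftremainder} and the trivial residues play no role.
\end{itemize}

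What works is a local argument. The function $\tilde g(w)e^{-\eta w}$ is integrable for each $\eta>0$ and bounded uniformly for $0<\eta\le1$, so dominated convergence and Fourier inversion give
\[
(\tilde g*K_T)(u)=\lim_{\eta\to0^+}\frac1{2\pi}\int_{-T}^{T}G(\tfrac12+\eta+i\xi)\Bigl(1-\frac{|\xi|}{T}\Bigr)e^{i\xi u}\,d\xi .
\]
Subtract the finitely many polar parts $c_\gamma/(s-\rho)$ with $|\gamma|\le T$. Each is the transform of $c_\gamma e^{(i\gamma-\eta)w}\mathbf{1}_{w>0}$, so in the limit it contributes
\[
c_\gamma e^{i\gamma u}\int_{-\infty}^{u}K_T(t)e^{-i\gamma t}\,dt=(1-|\gamma|/T)c_\gamma e^{i\gamma u}+O(1/(Tu)).
\]
The remainder is continuous up to the segment and contributes $o(1)$ by the Riemann--Lebesgue lemma.

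Combined with your splitting estimate and almost periodicity, this yields $\sup_u|\sigma_T(u)|\le L$. It uses only that $G=\Gamma/\zeta$ on $\operatorname{Re}s>\tfrac12$ with simple poles on the line, not the explicit formula. The reverse inequality in \eqref{eq:fejernorm} still requires the weak argument of the main gap above.
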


\begin{proof}
Within this proof write
\[
 a_\rho=\frac{\Gamma(\rho)}{\zeta'(\rho)},\qquad
 L=\limsup_{x\to0^{+}}\sqrt{x}\abs{\Phi(e^{-x})}.
\]
For $u\in\mathbb R$, put $x=e^{-u}$ and define
\begin{multline}\label{eq:proofh}
 h(u)=e^{-u/2}\big\{\Phi(e^{-x})-\pi\kappa(x)-2\lambda(x)
       -4\upsilon(x)\\
       +2\beta(x)\log(2\pi x)+2\big\},\qquad x=e^{-u}.
\end{multline}
Theorem~\ref{thm:main} and Proposition~\ref{prop:leftremainder} give
\begin{equation}\label{eq:hgrouped}
 h(u)=\lim_{\nu\to\infty}\sum_{\abs{\gamma}<T_\nu}
                  a_\rho e^{i\gamma u},
\end{equation}
locally uniformly in $u$. In particular, $h$ is continuous. Also, from
the defining power series,
\begin{equation}\label{eq:hlimsup}
 \limsup_{u\to\infty}\abs{h(u)}=L.
\end{equation}
This equality is valid with either side possibly infinite.

\smallskip
\noindent\emph{A test-function identity.}
Whenever $\sup_\rho\abs{a_\rho}<\infty$, local uniform convergence
and $N(T)=O(T\log T)$ imply, for every smooth compactly supported
function $\varphi$,
\begin{equation}\label{eq:htest}
 \int_{\mathbb R}h(u)\varphi(u)\,du
       =\sum_\rho a_\rho\widehat\varphi(-\gamma),
 \qquad
 \widehat\varphi(t)=\int_{\mathbb R}\varphi(u)e^{-itu}\,du.
\end{equation}
Indeed, integrate the finite sums in \eqref{eq:hgrouped} first.
Repeated integration by parts gives
$\abs{\widehat\varphi(t)}\leq C_{\varphi,A}(1+\abs t)^{-A}$ for
every integer $A$, so the series on the right is absolutely convergent.
This proves \eqref{eq:htest} without rearranging the pointwise zero sum.

\smallskip
\noindent\emph{$(H_{\mathrm F})$ implies the endpoint bound.}
Write $P_T(u)$ for the finite sum in \eqref{eq:HF} with $x=e^{-u}$.
For a fixed zero ordinate $\gamma_0$ and $T>\abs{\gamma_0}$,
\[
 \lim_{U\to\infty}\frac1{2U}\int_{-U}^{U}
          P_T(u)e^{-i\gamma_0u}\,du
     =\left(1-\frac{\abs{\gamma_0}}T\right)a_{\rho_0}.
\]
Distinct ordinates correspond to distinct zeros under RH and (S).
Thus the bound $\abs{P_T}\leq C$ gives $\abs{a_\rho}\leq C$ on
letting $T\to\infty$. For a compactly supported smooth $\varphi$,
absolute convergence of the series in \eqref{eq:htest} now yields
\[
 \lim_{T\to\infty}\int_{\mathbb R}P_T(u)\varphi(u)\,du
       =\int_{\mathbb R}h(u)\varphi(u)\,du.
\]
The absolute value of the latter integral is at most
$C\int\abs\varphi$. Approximation by nonnegative smooth functions
concentrated near any prescribed point, and continuity of $h$, give
$\abs{h(u)}\leq C$ everywhere. Equation~\eqref{eq:hlimsup} proves the
endpoint bound.

\smallskip
\noindent\emph{The endpoint bound implies a global bound for $h$.}
Assume now $L<\infty$. We first obtain bounded coefficients directly
from the Mellin transform, not from absolute convergence of the zero
series. The argument in Lemma~\ref{lem:bound-holomorphy} and in the proof of
Theorem~\ref{thm:criterion}(a) gives $G(s)=\Gamma(s)/\zeta(s)$ for
$\operatorname{Re}s>1/2$. For each fixed $\rho=1/2+i\gamma$,
\[
 a_\rho=\lim_{\eta\to0^{+}}\eta G(\rho+\eta).
\]
Given $\epsilon>0$, choose $\delta>0$ such that
$\sqrt{x}\abs{\Phi(e^{-x})}\leq L+\epsilon$ for $0<x<\delta$.
The part of the Mellin integral over $[\delta,\infty)$ remains bounded
as $\eta\to0^{+}$, whereas its part over $(0,\delta)$ has modulus
at most $(L+\epsilon)\delta^\eta/\eta$. Therefore
\begin{equation}\label{eq:acoefbound}
 \abs{a_\rho}\leq L\qquad\text{for every }\rho.
\end{equation}
The identity \eqref{eq:htest} consequently applies.

There is a sequence $\tau_j\to\infty$ such that
$e^{i\gamma\tau_j}\to1$ for every zero ordinate $\gamma$.
Indeed, for any finite set of frequencies, infinitely many vectors $(e^{i\gamma_1n},\ldots,e^{i\gamma_rn})$ lie in one small box of the compact torus. Differences of arbitrarily separated indices give arbitrarily large simultaneous return times. Apply this with accuracy $1/j$ to the first $j$ positive ordinates and choose $\tau_j>j$. Negative ordinates follow by conjugation.

Apply \eqref{eq:htest} to a translate of $\varphi$. Absolute convergence
and \eqref{eq:acoefbound} give
\[
 \int_{\mathbb R}h(u+\tau_j)\varphi(u)\,du
   =\sum_\rho a_\rho e^{i\gamma\tau_j}\widehat\varphi(-\gamma)
   \longrightarrow\int_{\mathbb R}h(u)\varphi(u)\,du.
\]
On the support of the translated test function, \eqref{eq:hlimsup}
bounds $\abs{h}$ by $L+\epsilon$ for all sufficiently large $j$.
It follows that $\abs{\int h\varphi}\leq L\int\abs\varphi$.
Continuity again implies
\begin{equation}\label{eq:hglobal}
 \sup_{u\in\mathbb R}\abs{h(u)}=L.
\end{equation}
The equality, rather than just the upper bound, follows from
\eqref{eq:hlimsup}.

\smallskip
\noindent\emph{Positive convolution gives the finite Fej\'er sums.}
The classical real-line Fej\'er kernel is
\begin{equation}\label{eq:fejerkernel}
 K_T(v)=\frac{1-\cos(Tv)}{\pi T v^2},\quad
 K_T(0)=\frac{T}{2\pi},\qquad
 \int_{\mathbb R}K_T(v)\,dv=1.
\end{equation}
It is nonnegative and has Fourier transform
$\widehat K_T(t)=(1-\abs t/T)_+$, where $v_+=\max(v,0)$; see
\cite{TitchmarshFI}. The normalization follows from
$\int_{\mathbb R}(1-\cos v)v^{-2}\,dv=\pi$.
The test-function identity implies
\begin{equation}\label{eq:fejerconvolution}
 \int_{\mathbb R}h(u-v)K_T(v)\,dv
       =\sum_{\abs{\gamma}<T}
          \left(1-\frac{\abs\gamma}{T}\right)a_\rho e^{i\gamma u}
       =P_T(u).
\end{equation}
Here is a justification of this use of a kernel that is not compactly
supported. Because $h$ is bounded and the coefficients in
\eqref{eq:htest} are bounded, a smooth cutoff first extends that
identity to Schwartz test functions. Apply it to
$K_T(v)e^{-\epsilon v^2}$, translated by $u$. Its Fourier transform is
the convolution of $(1-\abs t/T)_+$ with the unit-mass Gaussian
$(4\pi\epsilon)^{-1/2}\exp(-t^2/(4\epsilon))$. For $\abs\gamma>T+1$ the transforms are bounded, uniformly
for $0<\epsilon\leq1$, by a constant depending on $T$ times
$e^{-(\abs\gamma-T)^2/8}$. The zero count therefore permits dominated
convergence in the coefficient sum. The finitely many remaining
ordinates converge to the triangular weights, including weight zero
at an endpoint. Dominated convergence in the integral follows from
$\abs{h}\leq L$ and $K_T\in L^1$. This proves
\eqref{eq:fejerconvolution}.

Positivity and unit mass now give $\abs{P_T(u)}\leq L$ for all $T,u$.
Conversely, $K_T$ is an approximate identity as $T\to\infty$, so
$P_T(u)\to h(u)$ at every $u$. Equation~\eqref{eq:hglobal} proves
\eqref{eq:fejernorm}. Finally, (H) implies \eqref{eq:HF} immediately
by the triangle inequality.
\end{proof}

\begin{rem}[Scope of the condition]\label{rem:HFscope}
The implication $(H)\Rightarrow(H_{\mathrm F})$ permits cancellation instead of requiring absolute summability. Bounded Fej\'er means do not imply absolute summability for general Fourier series, but strictness of this implication for the zeta spectrum is not established here. In particular, the uniform estimate \eqref{eq:HF} remains an additional condition, not a consequence proved from RH and (S).
\end{rem}

\section{A criterion for the Riemann hypothesis and simplicity}\label{sec:criterion}

\begin{proof}[Proof of Theorem \ref{thm:criterion}(b)]
Part (b) follows from Theorem~\ref{thm:fejer}. Under (H), the direct argument uses $\rho=\tfrac12+i\gamma$ in \eqref{eq:mainformula}:
\begin{multline}\label{eq:rhforward1}
\sum_{n=1}^{\infty}\mu(n)e^{-nx}
=x^{-\frac12}\sum_{\rho}\frac{\Gamma(\rho)}{\zeta'(\rho)}\,x^{-i\gamma}\\
+\pi\kappa(x)+2\lambda(x)+4\upsilon(x)-2\beta(x)\log(2\pi x)-2 .
\end{multline}
Under (H),
\begin{equation}\label{eq:rhforward2}
\abs{\sum_\rho\frac{\Gamma(\rho)}{\zeta'(\rho)}x^{-i\gamma}}
\leq\sum_\rho\abs{\frac{\Gamma(\rho)}{\zeta'(\rho)}}<\infty
\end{equation}
uniformly in $x$. The remaining contribution is $-2+O(x)$ by \eqref{eq:correction-global}, proving \eqref{eq:rhbound}.
\end{proof}

\begin{lem}[Holomorphy forced by the decay bound]\label{lem:bound-holomorphy}
Assume that
\[
\Phi(e^{-x})=\sum_{n=1}^{\infty}\mu(n)e^{-nx}=O(x^{-1/2})
\qquad(x\to0^{+}).
\]
Define
\begin{equation}\label{eq:Gdef}
G(s):=\int_{0}^{\infty}\Phi(e^{-x})x^{s-1}\,dx,
\qquad x^{s-1}:=e^{(s-1)\log x}\quad(x>0).
\end{equation}
The integral in \eqref{eq:Gdef} is absolutely convergent and holomorphic in the half-plane
\[
\Omega:=\set{s\in\Complex:\operatorname{Re}(s)>\tfrac12}.
\]
\end{lem}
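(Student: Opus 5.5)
The plan is to split the integral at $x=1$ and bound each piece uniformly on compact subsets of $\Omega$. Then holomorphy will follow from the standard theorem on integrals depending holomorphically on a parameter: locally uniform limits of holomorphic functions are holomorphic, or equivalently Morera plus Fubini.

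\emph{The piece over $[1,\infty)$.} Here I use the elementary bound $|\Phi(e^{-x})|\le(e^x-1)^{-1}$ recorded in Section~\ref{sec:mellin}. For $x\ge1$ this gives $|\Phi(e^{-x})|\le 2e^{-x}$, since $e^x-1\ge e^x/2$. Hence, for $\operatorname{Re}s=\sigma$, the integrand is bounded by $2e^{-x}x^{\sigma-1}$. On any compact set $K\subset\Omega$ we have $\sigma\le\sigma_1$, so this is dominated by $2e^{-x}x^{\sigma_1-1}$, which is integrable on $[1,\infty)$. In fact this piece is entire in $s$.

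\emph{The piece over $(0,1]$.} By hypothesis there are $C$ and $\delta\in(0,1]$ with $|\Phi(e^{-x})|\le Cx^{-1/2}$ for $0<x<\delta$. On $[\delta,1]$ the function $\Phi(e^{-x})$ is continuous, because the series converges locally uniformly on $(0,\infty)$ as noted in Section~\ref{sec:mellin}; it is therefore bounded, say by $B$. Enlarging the constant, $|\Phi(e^{-x})|\le C'x^{-1/2}$ on all of $(0,1]$. Then $|\Phi(e^{-x})x^{s-1}|\le C'x^{\sigma-3/2}$. On a compact $K\subset\Omega$ we have $\sigma\ge\sigma_K>1/2$, and for $x\le1$ this gives the uniform majorant $C'x^{\sigma_K-3/2}$. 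This majorant is integrable on $(0,1]$ because $\sigma_K-3/2>-1$. This is the only place where the decay hypothesis enters, and it is exactly what pushes the abscissa of absolute convergence from $1$ down to $1/2$.

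\emph{Holomorphy.} For each fixed $x>0$ the integrand $\Phi(e^{-x})e^{(s-1)\log x}$ is entire in $s$ and jointly continuous in $(x,s)$. The truncated integrals $\int_{1/n}^{n}$ are therefore holomorphic, for instance by differentiating under the integral on a compact interval, or by Morera's theorem combined with Fubini. The two dominating functions above show that these truncations converge uniformly on compact subsets of $\Omega$. Weierstrass's theorem then gives holomorphy of $G$ on $\Omega$.

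There is no real obstacle. The only care needed is the uniformity of the majorant near $x=0$, which requires the strict inequality $\sigma_K>1/2$ on compact subsets rather than pointwise. A possible addendum is that $G$ agrees with $\Gamma(s)/\zeta(s)$ for $\operatorname{Re}s>1$ by \eqref{eq:mellinfwd}. That is used afterwards but is not part of this lemma.
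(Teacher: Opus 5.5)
Your proof is correct and follows the paper's argument. Like the paper, you split at $x=1$, extend the $O(x^{-1/2})$ bound to all of $(0,1]$, use $|\Phi(e^{-x})|<2e^{-x}$ for $x\ge1$, and take majorants uniform on compact subsets of $\Omega$. The only difference is cosmetic: you obtain holomorphy from locally uniform convergence of the truncated integrals and Weierstrass's theorem, whereas the paper differentiates under the integral sign using the same majorants multiplied by powers of $|\log x|$.
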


\begin{proof}
Extend the assumed bound to $0<x\leq1$ by increasing its constant. For $\sigma=\operatorname{Re}s>1/2$,
\begin{equation}\label{eq:G-near-zero}
 \int_0^1|\Phi(e^{-x})x^{s-1}|\,dx
 \leq C\int_0^1x^{\sigma-3/2}\,dx=\frac{C}{\sigma-1/2}.
\end{equation}
For $x\geq1$, $|\Phi(e^{-x})|\leq(e^x-1)^{-1}<2e^{-x}$, so
\begin{equation}\label{eq:G-tail}
 \int_1^\infty|\Phi(e^{-x})x^{s-1}|\,dx
 \leq2\int_1^\infty e^{-x}x^{\sigma-1}\,dx<\infty.
\end{equation}
The majorants are uniform when $s$ ranges over a compact subset of $\Omega$; the same is true after multiplication by any fixed power of $|\log x|$. Differentiation under the integral proves holomorphy \cite[Section~2.5(i)]{DLMF}.
\end{proof}

\begin{proof}[Proof of Theorem \ref{thm:criterion}(a)]
Let $G$ be the holomorphic function on $\Omega=\{\operatorname{Re}s>1/2\}$ furnished by Lemma~\ref{lem:bound-holomorphy} under the assumed bound \eqref{eq:rhbound}. Initially, \eqref{eq:mellinfwd} gives
\begin{equation}\label{eq:product-identity-initial}
 \zeta(s)G(s)=\Gamma(s)\qquad(\operatorname{Re}s>1).
\end{equation}
On the connected domain $D=\Omega\setminus\{1\}$, the function
\begin{equation}\label{eq:F-product}
 F(s)=\zeta(s)G(s)-\Gamma(s)
\end{equation}
is holomorphic, including at any hypothetical zero of $\zeta$. The identity theorem therefore gives
\begin{equation}\label{eq:zeta-G-Gamma}
 \zeta(s)G(s)=\Gamma(s)\qquad(s\in D).
\end{equation}
A zero $\rho\in D$ would imply $\Gamma(\rho)=0$, which is impossible. Thus
\begin{equation}\label{eq:no-zeros-right}
 \zeta(s)\neq0\qquad(\operatorname{Re}s>1/2,\ s\neq1),
\end{equation}
where $s=1$ is its pole, not a zero. Reflection by the functional equation excludes nontrivial zeros to the left of the critical line, proving RH. This argument applies the identity theorem to a product; it does not assume holomorphy of $1/\zeta$ in the larger half-plane.

It remains to prove simplicity. Fix a zero $\rho=1/2+i\gamma$. For $0<\eta\leq1/4$, the same Mellin estimates give
\begin{equation}\label{eq:radial-mellin-bound}
 |G(\rho+\eta)|\leq C\int_0^1x^{\eta-1}\,dx+O_\rho(1)
 =\frac C\eta+O_\rho(1).
\end{equation}
If $\rho$ has multiplicity $m_\rho$, its Taylor expansion and \eqref{eq:zeta-G-Gamma} yield
\begin{equation}\label{eq:radial-pole-order}
 G(\rho+\eta)=
 \frac{m_\rho!\,\Gamma(\rho)}{\zeta^{(m_\rho)}(\rho)}
 \eta^{-m_\rho}(1+O_\rho(\eta))\qquad(\eta\to0^{+}).
\end{equation}
The leading coefficient is nonzero. Comparison with \eqref{eq:radial-mellin-bound} forces $m_\rho=1$. Neither (S), (H), nor $(H_{\mathrm F})$ has been assumed.
\end{proof}

\subsection{Multiplicity information and necessary residue bounds}

\begin{prop}[Logarithmic losses measure possible multiplicity]\label{prop:multlog}
Let $A\geq0$. If
\begin{equation}\label{eq:logcriterion}
 \Phi(e^{-x})=O\!\left(x^{-1/2}(\log(1/x))^A\right)
       \qquad(x\to0^{+}),
\end{equation}
then RH holds and every nontrivial zero has multiplicity at most $A+1$.
If, for an integer $k\geq0$, the right-hand side is replaced by
$o(x^{-1/2}(\log(1/x))^k)$, every multiplicity is at most $k$.
In particular, the displayed big-$O$ estimate with $0\leq A<1$
implies RH and (S). The weaker little-$o$ estimate
$o(x^{-1/2}\log(1/x))$ also implies RH and (S), whereas
$o(x^{-1/2})$ is impossible.
\end{prop}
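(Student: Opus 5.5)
We follow the proof of Theorem~\ref{thm:criterion}(a), with the logarithmic factor carried through the Mellin estimate. First, \eqref{eq:logcriterion} still gives absolute and locally uniform convergence of the Mellin integral \eqref{eq:Gdef} on $\Omega=\{\operatorname{Re}s>1/2\}$. For $\sigma>1/2$ we have
\[
\int_0^1 x^{\sigma-3/2}(\log(1/x))^A\,dx=\frac{\Gamma(A+1)}{(\sigma-1/2)^{A+1}}
\]
(substitute $x=e^{-u}$), and the tail over $[1,\infty)$ is handled exactly as in Lemma~\ref{lem:bound-holomorphy}. Hence $G$ is holomorphic on $\Omega$. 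The identity-theorem argument of Theorem~\ref{thm:criterion}(a) then gives $\zeta G=\Gamma$ on $\Omega\setminus\{1\}$. This excludes zeros with $\operatorname{Re}s>1/2$, and reflection yields RH.

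For the multiplicity, fix $\rho=\tfrac12+i\gamma$ with multiplicity $m_\rho$ and approach it radially, $s=\rho+\eta$ with $\eta\downarrow0$. Since $|x^{\rho+\eta-1}|=x^{\eta-1/2}$, the computation above gives
\[
|G(\rho+\eta)|\le C\,\Gamma(A+1)\,\eta^{-A-1}+O_\rho(1).
\]
Comparing this with \eqref{eq:radial-pole-order}, where $|G(\rho+\eta)|\asymp\eta^{-m_\rho}$ with nonzero leading coefficient, forces $m_\rho\le A+1$. When $0\le A<1$, the integer $m_\rho$ satisfies $m_\rho\le A+1<2$, so $m_\rho=1$.

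For the little-$o$ version with integer $k$, we split the Mellin integral at $\delta$. Given $\epsilon>0$, choose $\delta$ so that $\sqrt{x}\,|\Phi(e^{-x})|\le\epsilon(\log(1/x))^k$ on $(0,\delta)$. That part then contributes at most $\epsilon\, k!\,\eta^{-k-1}$, while the part over $[\delta,\infty)$ is $O_{\delta,\rho}(1)$. Therefore $\eta^{k+1}|G(\rho+\eta)|\to0$. If $m_\rho\ge k+1$, then \eqref{eq:radial-pole-order} gives $\eta^{k+1}|G|\ge\eta^{k+1-m_\rho}\cdot c$ with $c>0$, and this does not tend to $0$. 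Hence $m_\rho\le k$. The case $k=1$ gives (S). The case $k=0$ would give $m_\rho\le0$, which contradicts the existence of any nontrivial zero, for instance Hardy's theorem or simply $\rho=\tfrac12+14.13\ldots i$. So $o(x^{-1/2})$ is impossible. Note that RH follows in every case because $o(\cdot)$ implies $O(\cdot)$.

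The main obstacle is keeping the radial comparison sharp. The upper bound must use only the region $x<\delta$, where the hypothesis applies, with uniform control of the remainder. The lower bound \eqref{eq:radial-pole-order} must rely on $\Gamma(\rho)\neq0$ and $\zeta^{(m_\rho)}(\rho)\neq0$, so that the leading coefficient is nonzero. Both steps are routine once $G$ is known to be holomorphic and to satisfy $\zeta G=\Gamma$ on $\Omega\setminus\{1\}$.
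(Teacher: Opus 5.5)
Your proof is correct and follows the paper's argument essentially step for step: the same holomorphy and product-identity route to RH, the same evaluation $\int_0^1 x^{\eta-1}(\log(1/x))^A\,dx=\Gamma(A+1)\eta^{-A-1}$ compared against the radial pole order \eqref{eq:radial-pole-order}, and the same choice of small $\epsilon$ before letting $\eta\to0^{+}$ in the little-$o$ case. One point to state more carefully is the holomorphy step. For $A>0$ the bound cannot be extended to all of $(0,1]$ by enlarging the constant, as in Lemma~\ref{lem:bound-holomorphy}, because $\log(1/x)$ vanishes at $x=1$; so the segment $[\delta,1]$ should go into the bounded $O_\rho(1)$ part, as your splitting at $\delta$ already does in the little-$o$ argument.
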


\begin{proof}
For every $\sigma>1/2$, a logarithmic power is integrable against
$x^{\sigma-3/2}$ near zero. The holomorphy and product-identity argument above therefore still prove RH.
With $u=\log(1/x)$,
\begin{equation}\label{eq:logmellin}
 \int_0^1x^{\eta-1}(\log(1/x))^A\,dx
    =\int_0^\infty e^{-\eta u}u^A\,du
    =\frac{\Gamma(A+1)}{\eta^{A+1}}.
\end{equation}
The part of $G(\rho+\eta)$ outside a sufficiently small fixed interval
near zero is bounded as $\eta\to0^{+}$. The assumed bound and
\eqref{eq:logmellin} thus give
$G(\rho+\eta)=O_\rho(\eta^{-A-1})$. Equation
\eqref{eq:radial-pole-order} forces $m_\rho\leq A+1$.
For a little-$o$ estimate, first make the constant in the bound near
zero arbitrarily small, then let $\eta\to0^{+}$. This gives
$G(\rho+\eta)=o_\rho(\eta^{-k-1})$, excluding multiplicity $k+1$ as
well as larger multiplicities. For $k=0$ this contradicts the existence
of nontrivial zeros.
\end{proof}

\begin{prop}[Square-summability forced by the endpoint]\label{prop:residueL2}
If \eqref{eq:rhbound} holds and
$L=\limsup_{x\to0^{+}}\sqrt{x}\abs{\Phi(e^{-x})}$, then
\begin{equation}\label{eq:residueL2}
 \sum_\rho\abs{\frac{\Gamma(\rho)}{\zeta'(\rho)}}^2\leq L^2.
\end{equation}
In particular $L>0$ and
\begin{equation}\label{eq:zprime-lower}
 \abs{\zeta'(\tfrac12+i\gamma)}
 \geq\frac1L\sqrt{\frac{\pi}{\cosh(\pi\gamma)}}.
\end{equation}
No hypothesis on the convergence of a zero sum is required.
\end{prop}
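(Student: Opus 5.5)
By Theorem~\ref{thm:criterion}(a), the endpoint bound \eqref{eq:rhbound} already gives RH and (S). So every zero is $\rho=\tfrac12+i\gamma$, distinct zeros have distinct ordinates, and $a_\rho=\Gamma(\rho)/\zeta'(\rho)$ is defined. I will reuse the function $h(u)$ from the proof of Theorem~\ref{thm:fejer}, namely \eqref{eq:proofh}. That proof obtained $\abs{a_\rho}\le L$ in \eqref{eq:acoefbound} and $\sup_u\abs{h(u)}=L$ in \eqref{eq:hglobal}, using only $L<\infty$, RH and (S); it did not assume $(H_{\mathrm F})$ or (H). It also established the Fej\'er identity \eqref{eq:fejerconvolution}, $P_T=h*K_T$ with $P_T(u)=\sum_{\abs\gamma<T}(1-\abs\gamma/T)a_\rho e^{i\gamma u}$, and hence $\abs{P_T(u)}\le L$ for all $T$ and $u$. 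These are the inputs.

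The plan is a Bessel/Parseval argument on a long interval. Fix $T$. Since $P_T$ is a finite trigonometric polynomial with distinct frequencies,
\[
\lim_{U\to\infty}\frac1{2U}\int_{-U}^{U}\abs{P_T(u)}^2\,du=\sum_{\abs\gamma<T}\Bigl(1-\frac{\abs\gamma}{T}\Bigr)^2\abs{a_\rho}^2 .
\]
The cross terms $e^{i(\gamma-\gamma')u}$ with $\gamma\ne\gamma'$ average to $0$; this step is where simplicity matters, because it makes each ordinate carry a single coefficient. The left side is at most $L^2$, since $\abs{P_T}\le L$ pointwise. Now let $T\to\infty$. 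Each weight $(1-\abs\gamma/T)^2$ increases to $1$, so monotone convergence gives $\sum_\rho\abs{a_\rho}^2\le L^2$, which is \eqref{eq:residueL2}.

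For the consequences: nontrivial zeros exist, and $\Gamma(\rho)\ne0$ and $\zeta'(\rho)\ne0$ by (S), so some $a_\rho\ne0$ and therefore $L>0$. Next, a single term of \eqref{eq:residueL2} gives $\abs{a_\rho}\le L$. The identity $\abs{\Gamma(\tfrac12+i\gamma)}^2=\pi/\cosh(\pi\gamma)$, already used in Proposition~\ref{prop:leftremainder} in conjugate form, then rearranges to \eqref{eq:zprime-lower}.

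The main obstacle is making sure the statement's claim ``no hypothesis on convergence of a zero sum'' is honest. In particular, I must check that the justification of \eqref{eq:fejerconvolution} in Theorem~\ref{thm:fejer} used only boundedness of $h$, boundedness of the coefficients and $N(T)=O(T\log T)$, and never $(H_{\mathrm F})$. I believe it did: that direction of the proof started from $L<\infty$ alone.

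If one prefers to avoid the Fej\'er identity, there is an alternative route, and I would try it first if that dependence looks delicate. Use the test-function identity \eqref{eq:htest} with $\varphi$ replaced by the product of a wide smooth bump and a translate, and apply Plancherel to $h$ restricted to $[-U,U]$. This bounds $\sum\abs{a_\rho}^2\abs{\widehat\varphi}^2$-type sums by $L^2\int\abs\varphi^2$. Letting the bump widen then recovers the same bound.
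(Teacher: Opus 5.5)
Your proof is correct, but it reaches \eqref{eq:residueL2} by a different route from the paper.

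\textbf{Your route.} You take the pointwise bound $\abs{P_T(u)}\le L$ from the proof of Theorem~\ref{thm:fejer}. You then apply the mean-value (Parseval) identity for a trigonometric polynomial with distinct frequencies. That bound depends on the whole chain in that proof:
\begin{itemize}
\item the explicit formula of Theorem~\ref{thm:main};
\item local uniform convergence from Proposition~\ref{prop:leftremainder};
\item the test-function identity \eqref{eq:htest};
\item the simultaneous-return argument giving $\sup_u\abs{h(u)}=L$;
\item the Fej\'er convolution \eqref{eq:fejerconvolution}.
\end{itemize}
None of these steps uses $(H_{\mathrm F})$ or (H), and none appeals to this proposition. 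So your reliance on them is legitimate and not circular.

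\textbf{The paper's route.} The paper works only with the one-sided function $f(u)=e^{-u/2}\Phi(e^{-e^{-u}})$ on $[0,\infty)$. Its Abel means $\eta\int_0^\infty e^{-\eta u}f(u)e^{-i\tau u}\,du$ equal $\eta G(\tfrac12+\eta+i\tau)$, up to a term killed by the factor $\eta$. They therefore tend to $a_\rho$ or to $0$ purely because $G=\Gamma/\zeta$ has simple poles on the critical line. Expanding $\eta\int_0^\infty e^{-\eta u}\abs{f-P}^2\,du\ge0$ then yields Bessel's inequality. The only input is $\limsup_{u\to\infty}\abs{f(u)}=L$; no global bound is needed.

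\textbf{Comparison.} The paper never uses the explicit formula, the zero sum, the heights $T_\nu$, or any almost-periodicity. It needs only Lemma~\ref{lem:bound-holomorphy} and Theorem~\ref{thm:criterion}(a), so the statement's last sentence holds in the strongest sense. Your version is shorter once Theorem~\ref{thm:fejer} is available. It also shows that \eqref{eq:residueL2} is simply Parseval for uniformly bounded Fej\'er means. The cost is that it inherits all of that theorem's machinery.

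\textbf{A small correction.} Distinct zeros have distinct ordinates by RH alone. What (S) supplies is that each zero contributes the single residue $\Gamma(\rho)/\zeta'(\rho)$. This makes $h$ a pure exponential sum, rather than one with polynomial factors in $u$.
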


\begin{proof}
By Theorem~\ref{thm:criterion}(a), RH and (S) have already been proved.
For $u\geq0$, write only within this proof
$f(u)=e^{-u/2}\Phi(e^{-e^{-u}})$ and
$a_\rho=\Gamma(\rho)/\zeta'(\rho)$. The function $f$ is bounded and
$\limsup_{u\to\infty}\abs{f(u)}=L$. Substitution $x=e^{-u}$ gives
\begin{multline}\label{eq:Abelcoefficients}
 \eta\int_0^\infty e^{-\eta u}f(u)e^{-i\tau u}\,du
  =\eta\int_0^1\Phi(e^{-x})x^{-1/2+\eta+i\tau}\,dx\\
 \longrightarrow
 \begin{cases}
 a_\rho,&\rho=\frac12+i\tau\text{ is a zero},\\
 0,&\frac12+i\tau\text{ is not a zero}.
 \end{cases}
\end{multline}
Indeed the omitted part of $G$ over $[1,\infty)$ is analytic and is
annihilated in the limit by the factor $\eta$, and the limit of
$\eta G(1/2+\eta+i\tau)$ is its simple-pole residue or zero.
For finitely many zeros let $P(u)=\sum_{j=1}^r a_{\rho_j}e^{i\gamma_j u}$.
Expand the nonnegative integral
\[
 0\leq\eta\int_0^\infty e^{-\eta u}\abs{f(u)-P(u)}^2\,du.
\]
The mixed term tends to $\sum_j\abs{a_{\rho_j}}^2$ by
\eqref{eq:Abelcoefficients}, and the $\abs P^2$ term has the same
limit because
$\eta\int_0^\infty e^{-\eta u}e^{i(\gamma_j-\gamma_k)u}\,du$
tends to $1$ for $j=k$ and to $0$ otherwise. The limsup of the $\abs f^2$
term is at most $L^2$: split at a point beyond which
$\abs f\leq L+\epsilon$, then let $\eta\to0^{+}$ and
$\epsilon\to0^{+}$. Thus
$\sum_{j=1}^r\abs{a_{\rho_j}}^2\leq L^2$. Taking the supremum over
finite sets proves \eqref{eq:residueL2}. Each coefficient is nonzero,
so $L>0$. The identity
$\abs{\Gamma(1/2+i\gamma)}^2=\pi/\cosh(\pi\gamma)$ yields
\eqref{eq:zprime-lower}.
\end{proof}

\begin{rem}
The derivative bound \eqref{eq:zprime-lower} decays exponentially in $|\gamma|$. Neither it nor \eqref{eq:residueL2} proves (H). Simplicity alone gives only pointwise nonvanishing of the derivative; zeros escaping to infinity need not have derivatives bounded away from zero.
\end{rem}

\subsection{Relation to the convolution criterion}
For clarity, the endpoint implication in B\'aez-Duarte's Theorem~4.1 \cite{BaezDuarte} specializes exactly to this transform. In his notation, take
\[
 g(v)=\sum_{n\leq v}\frac{\mu(n)}n,\qquad
 h(u)=u^{-1}e^{-1/u},\qquad
 \varphi(u)=u h'(u).
\]
Then $\varphi(u)=(u^{-2}-u^{-1})e^{-1/u}$ and
\begin{align*}
 G_\varphi(t)&=\int_0^\infty g(t/u)\varphi(u)\frac{du}{u}\\
 &=\sum_{n\geq1}\frac{\mu(n)}n\int_0^{t/n}h'(u)\,du
 =\frac1t\Phi(e^{-1/t}).
\end{align*}
The interchange is absolute: for $n>t$, $h'$ is positive on $(0,t/n)$ and its integral is $h(t/n)$; the other $n$ form a finite set. Moreover,
\[
 \varphi^\wedge(s)=\int_0^\infty\varphi(u)u^{-s-1}\,du
 =\Gamma(s+2)-\Gamma(s+1)=s\Gamma(s+1).
\]
The absolute integrals converge for $\operatorname{Re}s>-1$, and this transform is holomorphic and nonzero on $-1/2\leq\operatorname{Re}s<0$. These are the required hypotheses. The direct proof above is therefore a concrete realization of that established criterion, supplemented here by the explicit residue analysis and the quantitative conclusions of Sections~\ref{sec:zerosum}--\ref{sec:criterion}.

\section{Concluding remarks}\label{sec:remarks}

The double-pole calculation illustrates a general local mechanism. If a kernel's Mellin transform has a simple pole at a trivial zero of $\zeta$, its quotient by $\zeta$ has a double pole and hence a logarithmic residue. A regular nonzero Mellin transform gives a simple pole instead; a simple zero cancels the zeta zero.

For the family $e^{-n^\alpha x}$, $\alpha>0$, the Mellin quotient is $\Gamma(s)/\zeta(\alpha s)$. Collisions occur when $\alpha k=2m$ with $k,m\geq1$, hence at some negative integers precisely for rational $\alpha$. Uniform formulas near these collisions require additional analysis. Character weights similarly lead to $\Gamma(s)/L(s,\chi)$; primitive and imprimitive characters must be distinguished because of the extra Euler factors in the latter case; compare \cite{GM2023}.

For the transform studied here, the remaining issue in the converse criterion is the uniform estimate $(H_{\mathrm F})$, not pointwise convergence of a smoothed zero sum. The relation to the Mertens formula through partial summation is treated separately; no unsmoothed endpoint estimate is inferred from the present results.

\section*{Funding}
The author declares that no funds, grants, or other support were received during the preparation of this manuscript.

\section*{Acknowledgements}
The author acknowledges mathematical and editorial assistance from Claude (Anthropic). Claude assisted with the uniform Gamma estimate in Lemma~\ref{lem:gammalower}, the M\"obius--Bessel expansions in Theorem~\ref{thm:bessel}, the contour estimates, and the organization of references and cross-references. The author is responsible for the mathematical arguments, source attribution, and final text.

\appendix

\section{Mellin inversion}\label{app:mellininv}

This and the following appendices record, in complete step-by-step detail, the auxiliary results supporting the main text. Throughout, $m,N\in\{1,2,3,\dots\}$, $x>0$, $\gamma_{e}$ is Euler's constant, and $\psi=\Gamma'/\Gamma$. We begin with the inversion lemma invoked in Section~\ref{sec:mellin}.

\begin{lem}[Mellin inversion]\label{lem:mellininv}
Let $f\colon(0,\infty)\to\Real$ be continuous and of bounded variation on every compact subinterval, and suppose that $\intzi\abs{f(y)}\,y^{\sigma-1}\,dy<\infty$ for every $\sigma$ in an open interval $I$, and put
\[F(s)\;=\;\intzi f(y)\,y^{s-1}\,dy\qquad(\operatorname{Re}s\in I).\]
Then, for every $\sigma\in I$ and every $x>0$,
\[f(x)\;=\;\frac{1}{2\pi i}\,\lim_{T\to\infty}\int_{\sigma-iT}^{\sigma+iT}F(s)\,x^{-s}\,ds .\]
\end{lem}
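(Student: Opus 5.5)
The plan is to reduce the statement to the classical Fourier inversion theorem for functions of bounded variation (the Dirichlet--Jordan theorem) by a logarithmic change of variables. Fix $\sigma\in I$ and $x>0$. Put $y=e^{u}$ and define $g(u)=f(e^{u})e^{\sigma u}$ for $u\in\Real$. Then
\[
\intpminf\abs{g(u)}\,du=\intzi\abs{f(y)}\,y^{\sigma-1}\,dy<\infty .
\]
So $g\in L^{1}(\Real)$, and for $s=\sigma+it$,
\[
F(\sigma+it)=\intpminf g(u)e^{itu}\,du=:\widehat g(t).
\]
Also $g$ is continuous. On every compact interval it is of bounded variation, since a product of a BV function and a smooth function is BV there, and the map $u\mapsto e^{u}$ carries compact intervals to compact subintervals of $(0,\infty)$.

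Writing $x=e^{v}$ and $ds=i\,dt$, the truncated inversion integral becomes
\[
\frac{1}{2\pi i}\int_{\sigma-iT}^{\sigma+iT}F(s)x^{-s}\,ds
=\frac{e^{-\sigma v}}{2\pi}\int_{-T}^{T}\widehat g(t)e^{-itv}\,dt .
\]
Inserting the absolutely convergent integral for $\widehat g$ and applying Fubini on the bounded $t$-range gives the Dirichlet-kernel form
\[
\frac{1}{2\pi}\int_{-T}^{T}\widehat g(t)e^{-itv}\,dt
=\frac1\pi\intpminf g(v+w)\frac{\sin(Tw)}{w}\,dw .
\]
It therefore suffices to show that this tends to $g(v)$ as $T\to\infty$. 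Multiplying the limit by $e^{-\sigma v}$ then gives $f(e^{v})=f(x)$.

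For the limit, fix $\delta>0$ and split the integral into $\abs{w}\le\delta$ and $\abs{w}>\delta$.

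\textbf{Outer piece.} On $\abs{w}>\delta$, the function $g(v+w)/w$ is integrable, so the Riemann--Lebesgue lemma makes this part tend to $0$.

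\textbf{Inner piece.} Write $g(v+w)=g(v)+(g(v+w)-g(v))$. The first term contributes
\[
\frac{g(v)}{\pi}\int_{-\delta}^{\delta}\frac{\sin Tw}{w}\,dw\longrightarrow g(v),
\]
by the Dirichlet integral $\int_{-\infty}^{\infty}\sin w\,dw/w=\pi$. The second term is handled by Jordan's argument. On $[0,\delta]$ write $g(v+w)-g(v)$ as a difference of two monotone functions that vanish at $w=0^{+}$; this uses continuity at $v$. The second mean value theorem, together with the uniform bound $\abs{\int_a^b\sin(Tw)\,dw/w}\le C$, bounds this contribution by $C$ times the variation of $g$ on $[v,v+\delta]$. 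That variation tends to $0$ as $\delta\to0$ by continuity, and the interval $[v-\delta,v]$ is treated the same way. Choosing $\delta$ small first and then $T$ large completes the argument.

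The main obstacle is this localization step, the Jordan test. The Dirichlet kernel is not absolutely integrable, so dominated convergence is unavailable, and one must use the bounded-variation hypothesis through the second mean value theorem. Continuity of $f$ is what lets the limit be $f(x)$ itself rather than the average $\tfrac12\bigl(f(x^{+})+f(x^{-})\bigr)$. All other steps are routine Fubini and Riemann--Lebesgue applications justified by $g\in L^{1}$.
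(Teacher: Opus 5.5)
Your proposal is correct and follows the paper's route: a logarithmic substitution turns $F(\sigma+it)$ into the Fourier transform of a continuous, integrable, locally BV function, and Fourier inversion under Jordan's test gives the result. You use $y=e^{u}$ where the paper uses $y=e^{-u}$, which is immaterial, and you prove the inversion step directly (Dirichlet kernel, Riemann--Lebesgue, second mean value theorem) where the paper only cites the classical theorem.
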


\begin{proof}
Substitute $y=e^{-u}$ in $F$ and set $g(u)=f(e^{-u})\,e^{-\sigma u}$. Then $g$ is continuous on $\Real$, of bounded variation on every compact interval, integrable by the hypothesis at $\sigma$, and
\[F(\sigma+it)\;=\;\intpminf f(e^{-u})\,e^{-(\sigma+it)u}\,du\;=\;\intpminf g(u)\,e^{-itu}\,du\;=\;\widehat{g}(t) .\]
The classical Fourier inversion theorem for continuous, integrable functions of locally bounded variation (see \cite{Donoghue} and \cite{TitchmarshFI}, where the theorem appears with Jordan's test) gives, at every $u\in\Real$,
\[g(u)\;=\;\frac{1}{2\pi}\,\lim_{T\to\infty}\int_{-T}^{T}\widehat{g}(t)\,e^{itu}\,dt ;\]
writing $x=e^{-u}$, so that $e^{itu}=x^{-it}$ and $g(u)=f(x)\,x^{\sigma}$, and dividing by $x^{\sigma}$, this is exactly the display.
\end{proof}

\section{Sine estimates on the contours}\label{app:sines}

We derive the four estimates \eqref{eq:sinexact}, \eqref{eq:sinexact2}, \eqref{eq:sinhoriz}, \eqref{eq:sinhoriz2}. On the vertical line $s=\tfrac12-2N+it$,
\begin{equation}\label{eq:app-vert-angle}
\pi\Big(\tfrac12-2N+it\Big)=\frac{\pi}{2}-2\pi N+i\pi t ,
\end{equation}
and by $\sin(\tfrac{\pi}{2}+z)=\cos z$, the $2\pi$-periodicity of sine, and $\cos(i\pi t)=\cosh(\pi t)$,
\begin{equation}\label{eq:app-vert-sin}
\sin\Big(\pi\big(\tfrac12-2N+it\big)\Big)=\cos(i\pi t)=\cosh(\pi t),
\end{equation}
so that, by $\cosh u=\tfrac12(e^{u}+e^{-u})\geq\tfrac12 e^{\abs{u}}$,
\begin{equation}\label{eq:app-vert-bound}
\Big|\sin\Big(\pi\big(\tfrac12-2N+it\big)\Big)\Big|=\cosh(\pi t)\geq\tfrac12\,e^{\pi\abs{t}} ,
\end{equation}
which is \eqref{eq:sinexact}. For the half-angle, $\tfrac{\pi}{2}\big(\tfrac12-2N+it\big)=\tfrac{\pi}{4}-\pi N+i\tfrac{\pi t}{2}$, and the identity $\abs{\sin(a+ib)}^{2}=\sin^{2}a+\sinh^{2}b$ (which is \eqref{eq:sinsigit2}) gives, with $\sin^{2}\big(\tfrac{\pi}{4}-\pi N\big)=\tfrac12$ and $1+2\sinh^{2}u=\cosh(2u)$,
\begin{equation}\label{eq:app-half-exact}
\Big|\sin\Big(\tfrac{\pi}{2}\big(\tfrac12-2N+it\big)\Big)\Big|^{2}
=\tfrac12+\sinh^{2}\Big(\tfrac{\pi t}{2}\Big)=\tfrac12\cosh(\pi t)\geq\tfrac12 ,
\end{equation}
which is \eqref{eq:sinexact2}. On the horizontal lines $s=\sigma\pm iT$ with $T\geq1$, the same identity with $a=\pi\sigma$, $b=\pm\pi T$ gives
\begin{equation}\label{eq:app-horiz}
\abs{\sin(\pi(\sigma\pm iT))}^{2}=\sin^{2}(\pi\sigma)+\sinh^{2}(\pi T)\geq\sinh^{2}(\pi T),
\end{equation}
and, since $T\geq1$,
\[\sinh(\pi T)\;=\;\tfrac12\big(e^{\pi T}-e^{-\pi T}\big)
\;\geq\;\tfrac12\,e^{\pi T}\big(1-e^{-2\pi}\big)\;\geq\;\tfrac14\,e^{\pi T} ,\]
which is \eqref{eq:sinhoriz}. Identically,
\[\abs{\sin\Big(\tfrac{\pi}{2}(\sigma\pm iT)\Big)}\;\geq\;\sinh\Big(\tfrac{\pi T}{2}\Big)
\;\geq\;\tfrac12\,e^{\pi T/2}\big(1-e^{-\pi}\big)\;\geq\;\tfrac14\,e^{\pi T/2} ,\]
which is \eqref{eq:sinhoriz2}.

\section{Zeta values used in the main text}\label{app:zetavalues}

We record $\zeta(0)=-\tfrac12$, the trivial zeros $\zeta(-2m)=0$, and Euler's evaluation \eqref{eq:euler2m}, equivalently
\[\zeta(2m)\;=\;\frac{\abs{B_{2m}}\,(2\pi)^{2m}}{2\,(2m)!}\,;\]
for example,
\[\zeta(2)=\frac{\pi^{2}}{6},\qquad\zeta(4)=\frac{\pi^{4}}{90},\qquad\zeta(6)=\frac{\pi^{6}}{945} .\] All are classical (\cite[Chapter II]{Titchmarsh}); $\zeta(0)=-\tfrac12$ also follows from the functional equation \eqref{eq:funceq} by letting $s\to0$ and using $\zeta(1-s)\sim-1/s$ together with $\sin(\tfrac{\pi s}{2})\sim\tfrac{\pi s}{2}$:
\begin{equation}\label{eq:app-zeta0}
\zeta(0)=\lim_{s\to0}2^{s}\pi^{s-1}\cdot\frac{\pi s}{2}\cdot\Gamma(1-s)\cdot\Big(-\frac1s\Big)=-\frac12 .
\end{equation}

\section{\texorpdfstring{Laurent expansion of the Gamma function at $s=-n$}{Laurent expansion of the Gamma function at s=-n}}\label{app:gammalaurent}

We prove \eqref{eq:gammalaurent}: for every integer $n\geq0$, as $s\to-n$, with $w=s+n$,
\begin{equation}\label{eq:app-gammalaurent}
\Gamma(s)=\frac{(-1)^{n}}{n!}\left(\frac{1}{w}+\psi(n+1)+O(w)\right).
\end{equation}
Iterating the recurrence $\Gamma(z+1)=z\Gamma(z)$ exactly $n+1$ times,
\begin{equation}\label{eq:app-gammarec}
\Gamma(s)=\frac{\Gamma(s+n+1)}{s(s+1)\cdots(s+n)}=\frac{\Gamma(1+w)}{(w-n)(w-n+1)\cdots(w-1)\,w} .
\end{equation}
The numerator expands as $\Gamma(1+w)=1-\gamma_{e}w+O(w^{2})$, by $\Gamma'(1)=-\gamma_{e}$. The product of the $n$ nonvanishing factors in the denominator is
\begin{equation}\label{eq:app-denomprod}
\prod_{j=1}^{n}(w-j)=(-1)^{n}\,n!\,\prod_{j=1}^{n}\Big(1-\frac{w}{j}\Big)
=(-1)^{n}\,n!\,\big(1-H_{n}\,w+O(w^{2})\big),
\end{equation}
where
\[H_{n}\;=\;\sum_{j=1}^{n}\frac{1}{j}\]
is the $n$-th harmonic number, with the empty-sum convention $H_{0}=0$. Therefore
\begin{align}
\Gamma(s)&=\frac{1-\gamma_{e}w+O(w^{2})}{(-1)^{n}n!\,w\,\big(1-H_{n}w+O(w^{2})\big)}\label{eq:app-gammaassemble}\\
&=\frac{(-1)^{n}}{n!}\cdot\frac{1}{w}\big(1+(H_{n}-\gamma_{e})w+O(w^{2})\big),\notag
\end{align}
and since $\psi(n+1)=H_{n}-\gamma_{e}$ (\cite[Section 5.4]{DLMF}), this is \eqref{eq:app-gammalaurent}. In the application to the trivial zeros we take $n=2m$, where $(-1)^{2m}=1$:
\begin{equation}\label{eq:app-gamma2m}
\Gamma(s)=\frac{1}{(2m)!}\left(\frac{1}{w}+\psi(2m+1)+O(w)\right)\qquad(w=s+2m\to0).
\end{equation}

\section{\texorpdfstring{Expansion of $1/\zeta$ and of $x^{-s}$ at a trivial zero}{Expansion of 1/zeta and of x to the power (-s) at a trivial zero}}\label{app:invzeta}

Since $\zeta(-2m)=0$ and the trivial zeros are simple, $\zeta'(-2m)\neq0$, and Taylor expansion at $-2m$ gives, with $w=s+2m$,
\begin{equation}\label{eq:app-zetataylor}
\zeta(s)=\zeta'(-2m)\,w\left(1+\frac{\zeta''(-2m)}{2\,\zeta'(-2m)}\,w+O(w^{2})\right),
\end{equation}
whence, inverting the unit factor by the geometric series,
\begin{equation}\label{eq:app-invzeta}
\frac{1}{\zeta(s)}=\frac{1}{\zeta'(-2m)\,w}\left(1-\frac{\zeta''(-2m)}{2\,\zeta'(-2m)}\,w+O(w^{2})\right).
\end{equation}
Also $x^{-s}=x^{2m-w}=x^{2m}e^{-w\log x}$, so
\begin{equation}\label{eq:app-xexp}
x^{-s}=x^{2m}\big(1-w\log x+O(w^{2})\big).
\end{equation}

\section{\texorpdfstring{Direct Laurent computation of the residue at $s=-2m$}{Direct Laurent computation of the residue at s=-2m}}\label{app:laurentres}

Multiplying \eqref{eq:app-gamma2m}, \eqref{eq:app-invzeta} and \eqref{eq:app-xexp},
\begin{multline}\label{eq:app-product}
\frac{\Gamma(s)\,x^{-s}}{\zeta(s)}
=\frac{x^{2m}}{(2m)!\ \zeta'(-2m)}\cdot\frac{1}{w^{2}}
\Big(1+\psi(2m+1)\,w+O(w^{2})\Big)\\
\times\Big(1-\frac{\zeta''(-2m)}{2\,\zeta'(-2m)}\,w+O(w^{2})\Big)
\Big(1-w\log x+O(w^{2})\Big).
\end{multline}
The coefficient of $w$ in the product of the three unit factors is $\psi(2m+1)-\frac{\zeta''(-2m)}{2\zeta'(-2m)}-\log x$, so the coefficient of $w^{-1}$ in \eqref{eq:app-product}, that is the residue, equals
\begin{equation}\label{eq:app-rawres}
\Res_{s=-2m}\frac{\Gamma(s)\,x^{-s}}{\zeta(s)}
=\frac{x^{2m}}{(2m)!\ \zeta'(-2m)}
\left(\psi(2m+1)-\frac{\zeta''(-2m)}{2\,\zeta'(-2m)}-\log x\right),
\end{equation}
which is the residue whose agreement with \eqref{eq:evenres} is verified in Appendix \ref{app:sanity}.

\section{\texorpdfstring{Functional-equation evaluation of $\zeta'(-2m)$ and $\zeta''(-2m)$}{Functional-equation evaluation of zeta'(-2m) and zeta''(-2m)}}\label{app:funceqderiv}

Write the functional equation \eqref{eq:funceq} as the factorization
\begin{equation}\label{eq:app-hdef}
\zeta(s)=h(s)\,\sin\big(\tfrac{\pi s}{2}\big),\qquad
h(s)=2^{s}\pi^{s-1}\,\Gamma(1-s)\,\zeta(1-s),
\end{equation}
where $h$ is analytic and nonzero in a neighbourhood of $s=-2m$: indeed $\Gamma(1-s)$ and $\zeta(1-s)$ are analytic and nonzero at $s=-2m$, their arguments being $2m+1>1$. With $w=s+2m$,
\begin{equation}\label{eq:app-sinexp}
\sin\big(\tfrac{\pi s}{2}\big)=\sin\big(\tfrac{\pi w}{2}-\pi m\big)
=(-1)^{m}\sin\big(\tfrac{\pi w}{2}\big)
=(-1)^{m}\Big(\tfrac{\pi}{2}w+O(w^{3})\Big),
\end{equation}
and $h(s)=h(-2m)+h'(-2m)\,w+O(w^{2})$, so
\begin{equation}\label{eq:app-zetaexp2}
\zeta(s)=(-1)^{m}\tfrac{\pi}{2}\,h(-2m)\,w+(-1)^{m}\tfrac{\pi}{2}\,h'(-2m)\,w^{2}+O(w^{3}).
\end{equation}
Comparing with \eqref{eq:app-zetataylor} coefficient by coefficient,
\begin{align}
\zeta'(-2m)&=(-1)^{m}\tfrac{\pi}{2}\,h(-2m),\qquad
\frac{\zeta''(-2m)}{2}=(-1)^{m}\tfrac{\pi}{2}\,h'(-2m),\label{eq:app-zpid}\\
\frac{\zeta''(-2m)}{2\,\zeta'(-2m)}&=\frac{h'(-2m)}{h(-2m)} .\notag
\end{align}
Since
\[h(-2m)\;=\;2^{-2m}\,\pi^{-2m-1}\,(2m)!\ \zeta(2m+1) ,\]
the first identity gives the closed form
\begin{equation}\label{eq:app-zpval}
\zeta'(-2m)=\frac{(-1)^{m}\,(2m)!\ \zeta(2m+1)}{2\,(2\pi)^{2m}} .
\end{equation}
For the logarithmic derivative, $\log h(s)=s\log2+(s-1)\log\pi+\log\Gamma(1-s)+\log\zeta(1-s)$ on a simply connected neighbourhood of $-2m$ avoiding zeros of $h$, so
\begin{equation}\label{eq:app-hlog}
\frac{h'(s)}{h(s)}=\log(2\pi)-\psi(1-s)-\frac{\zeta'(1-s)}{\zeta(1-s)} ,
\end{equation}
and evaluating at $s=-2m$ and inserting into \eqref{eq:app-zpid} gives
\begin{equation}\label{eq:app-zppval}
\frac{\zeta''(-2m)}{2\,\zeta'(-2m)}=\log(2\pi)-\psi(2m+1)-\frac{\zeta'(2m+1)}{\zeta(2m+1)} .
\end{equation}

\section{Agreement of the two residue computations}\label{app:sanity}

Substituting \eqref{eq:app-zppval} into \eqref{eq:app-rawres}, the bracket becomes
\begin{multline}\label{eq:app-bracket}
\psi(2m+1)-\Big(\log(2\pi)-\psi(2m+1)-\frac{\zeta'(2m+1)}{\zeta(2m+1)}\Big)-\log x\\
=2\psi(2m+1)+\frac{\zeta'(2m+1)}{\zeta(2m+1)}-\log(2\pi x),
\end{multline}
while \eqref{eq:app-zpval} gives
\begin{equation}\label{eq:app-prefactor}
\frac{1}{(2m)!\ \zeta'(-2m)}=\frac{2\,(-1)^{m}\,(2\pi)^{2m}}{[(2m)!]^{2}\ \zeta(2m+1)} .
\end{equation}
Multiplying \eqref{eq:app-bracket} by \eqref{eq:app-prefactor} and by $x^{2m}$, and using $\psi(2m+1)=\Gamma'(2m+1)/(2m)!$, we obtain exactly \eqref{eq:evenres}. The two computations of the residue at the double pole, the sine-splitting computation of Section \ref{sec:even} and the direct Laurent computation above, therefore agree identically.

\section{The Gamma lower bound from the Weierstrass product}\label{app:gammaproduct}

We supply every step of Lemma \ref{lem:gammalower}. The Weierstrass product (\cite[Chapter XII]{WW}) is
\begin{equation}\label{eq:app-weier}
\frac{1}{\Gamma(z)}=z\,e^{\gamma_{e}z}\prod_{n\geq1}\Big(1+\frac{z}{n}\Big)e^{-z/n},
\end{equation}
convergent locally uniformly. At $z=\sigma+it$, $\sigma>0$, taking squared moduli and using $\abs{e^{\gamma_{e}z}}^{2}=e^{2\gamma_{e}\sigma}$, $\abs{e^{-z/n}}^{2}=e^{-2\sigma/n}$, $\abs{z}^{2}=\sigma^{2}+t^{2}$ and $\abs{n+z}^{2}=(n+\sigma)^{2}+t^{2}$,
\begin{equation}\label{eq:app-modsq}
\frac{1}{\abs{\Gamma(\sigma+it)}^{2}}
=(\sigma^{2}+t^{2})\,e^{2\gamma_{e}\sigma}\prod_{n\geq1}\frac{(n+\sigma)^{2}+t^{2}}{n^{2}}\,e^{-2\sigma/n} .
\end{equation}
Dividing the value of \eqref{eq:app-modsq} at $t=0$ by its value at $t$, every $t$-independent factor cancels, and
\begin{equation}\label{eq:app-ratio}
\frac{\abs{\Gamma(\sigma+it)}^{2}}{\Gamma(\sigma)^{2}}
=\frac{\sigma^{2}}{\sigma^{2}+t^{2}}\prod_{n\geq1}\frac{(n+\sigma)^{2}}{(n+\sigma)^{2}+t^{2}}
=\prod_{n\geq0}\Big(1+\frac{t^{2}}{(n+\sigma)^{2}}\Big)^{-1},
\end{equation}
which is \eqref{eq:gammaproduct}. Taking logarithms and comparing the sum with the integral of the positive decreasing function $u\mapsto\log(1+t^{2}/u^{2})$ over $u\in(\sigma,\infty)$, term $n$ being bounded by the integral over~$(\sigma+n-1,\sigma+n)$ for $n\geq1$,
\begin{equation}\label{eq:app-sumint}
\sum_{n\geq0}\log\Big(1+\frac{t^{2}}{(n+\sigma)^{2}}\Big)
\leq\log\Big(1+\frac{t^{2}}{\sigma^{2}}\Big)+\int_{0}^{\infty}\log\Big(1+\frac{t^{2}}{u^{2}}\Big)du .
\end{equation}
For $t=0$ the integrand vanishes identically and the integral equals $\pi\abs{t}=0$; assume therefore $t\neq0$. In the integral substitute $u=\abs{t}v$ and integrate by parts:
\begin{equation}\label{eq:app-ibp}
\int_{0}^{\infty}\log\big(1+v^{-2}\big)\,dv
=\Big[v\log\big(1+v^{-2}\big)\Big]_{0}^{\infty}+\int_{0}^{\infty}\frac{2\,dv}{1+v^{2}}=0+\pi ,
\end{equation}
the boundary term vanishing at both ends because
\begin{align*}
v\,\log\big(1+v^{-2}\big)&\;\sim\;2\,v\,\log\frac{1}{v}\;\longrightarrow\;0\qquad(v\to0^{+}),\\
v\,\log\big(1+v^{-2}\big)&\;\sim\;\frac{1}{v}\;\longrightarrow\;0\qquad(v\to\infty).
\end{align*} Hence the integral in \eqref{eq:app-sumint} equals $\pi\abs{t}$, and exponentiating,
\begin{equation}\label{eq:app-final}
\frac{\abs{\Gamma(\sigma+it)}^{2}}{\Gamma(\sigma)^{2}}
\geq\Big(1+\frac{t^{2}}{\sigma^{2}}\Big)^{-1}e^{-\pi\abs{t}} ,
\end{equation}
which is the square of \eqref{eq:gammalower}.

\end{document}